\documentclass[12pt]{amsart}
\usepackage{fullpage}
\usepackage{tabu}
\usepackage{amssymb} 
\usepackage{amsmath} 
\usepackage{amscd}
\usepackage{amsbsy}
\usepackage{comment, enumerate}
\usepackage[matrix,arrow]{xy}
\usepackage{mathrsfs}
\usepackage{color}
\usepackage{mathtools,caption}
\usepackage{todonotes}
\usepackage{tikz-cd}

\definecolor{darkgreen}{rgb}{0,0.5,0}

\usepackage[
        colorlinks, citecolor=darkgreen,
        backref,
]{hyperref}
\usepackage{cleveref}

\newcommand{\F}{\mathbb{F}}

\newcommand{\Q}{\mathbb{Q}}

\newcommand{\Z}{\mathbb{Z}}

\newcommand{\kbar}{{\kbar}}

\newcommand{\rhobar}{{\bar{\rho}}}

\newcommand{\p}{\mathfrak{p}}

\newcommand{\calO}{\mathcal{O}}

\newcommand{\Fp}{\mathfrak{p}}
\newcommand{\Fq}{\mathfrak{q}}

\DeclareMathOperator{\Ind}{Ind}

\DeclareMathOperator{\Tr}{Tr}

\newcommand{\GL}{\operatorname{GL}}

\numberwithin{equation}{section}

\newtheorem{theorem}{Theorem}[section]
\newtheorem{lemma}[theorem]{Lemma}

\newtheorem{proposition}[theorem]{Proposition}

\theoremstyle{definition}

\theoremstyle{remark}
\newtheorem{remark}[theorem]{Remark}

\begin{document}

\title[Reduction of Weil--Deligne Representations]{Reduction of Weil--Deligne Representations}

\date{\today}

\keywords{Weil-Deligne representations, $\ell$-adic representations}
\subjclass[2010]{11F70; 11F80, 11S37}

\author{Imin Chen}

\address{Department of Mathematics, Simon Fraser University\\
Burnaby, BC V5A 1S6, Canada.} 
\email{ichen@sfu.ca}

\author{Deniz Su\"ozer}

\address{Department of Mathematics, Simon Fraser University\\
Burnaby, BC V5A 1S6, Canada.}
\email{deniz\_suozer@sfu.ca}

\begin{abstract}
Let $p$ and $\ell$ be distinct odd primes. For a finite extension $F/\Q_p$, the local Langlands correspondence states that there is a canonical bijection between irreducible, smooth representations of $\GL_n(F)$ and $n$-dimensional, $\Phi$-semisimple Weil--Deligne representations of the Weil group $W_F$. Given two $2$-dimensional, semisimple, continuous representations $\rho_1, \rho_2$ of $W_F$ with images in $\GL_2(\calO_K)$, where $K/\Q_\ell$ is a finite extension with maximal ideal $\lambda \subset \calO_K$, a natural question to ask is when their mod $\lambda$ reductions $\overline{\rho}_1$ and $\overline{\rho}_2$ are isomorphic. In this paper, we give a complete characterization of when the reductions are isomorphic. For this, we first give a description of when the reductions of these representations are decomposable or irreducible, utilizing the correspondence between continuous $\ell$-adic representations of $W_F$ and $\ell$-adic Weil--Deligne representations. We conclude with some examples which arise in the modular method for solving generalized Fermat equations.

\end{abstract}

\thanks{}

\maketitle

{
\hypersetup{linkcolor=black}
\setcounter{tocdepth}{1}
\tableofcontents
}

\section{Introduction}

In this paper, we are interested in the following question: Let $F$ be a finite extension of $\Q_p$ and $K$ be a finite extension of $\Q_\ell$ with maximal ideal $\lambda \subset \calO_K$, where $p$ and $\ell$ are distinct odd primes. Let $\rho_1, \rho_2 : W_F \rightarrow \GL_2(\overline{\Q}_\ell)$ be two semisimple, continuous representations such that $\rho_1(W_F), \rho_2(W_F) \subset \GL_2(\calO_K)$. Under what conditions do we have $\overline{\rho}_1 \cong \overline{\rho}_2$?

The question can also be stated in terms of invariant $\calO_K$-lattices. Let $\rho: W_F \rightarrow \GL_2(\overline{\Q}_\ell)$ be an $\ell$-adic representation such that $\rho(W_F) \subset \GL_2(\calO_K)$. By picking a basis $\beta$ for $V := K^2$, we can identify $\GL_2(K)$ with $\GL(V)$. Let $\Lambda \subset V$ be the $\calO_K$-span of $\beta$. Then $\Lambda$ is an $\calO_K$-lattice in $V$ by definition. Further, $\rho(W_F)\Lambda \subseteq \Lambda$ since $\rho(v)$ is an $\calO_K$-linear combination of $\beta$ for any $v \in \beta$. Thus, $\Lambda$ is a $\rho$-invariant lattice in $V$.

Conversely, suppose that $\rho: W_F \rightarrow \GL(V)$ is a representation of $W_F$ with a $\rho$-invariant $\calO_K$-lattice $\Lambda \subset V$, where $V := K^2$. Then there exists an $\calO_K$-basis $\beta$ for $\Lambda$ that is also a $K$-basis for $V$, and we obtain the representation $[\rho]_\beta: W_F \rightarrow \GL_2(K) \hookrightarrow \GL_2(\overline{\Q}_\ell)$. Since $\Lambda$ is invariant under $\rho$, for any $v \in \beta$ and $g \in W_F$, $\rho(g)v$ is still in $\Lambda$. Consequently, $[\rho]_\beta(g)$ is in $\GL_2(\calO_K)$. Thus, we have $\rho(W_F) \subset \GL_2(\calO_K)$.

So far, we have seen that a representation $\rho$ whose image lies in $\GL_2(\calO_K)$ corresponds to a $\rho$-invariant $\calO_K$-lattice $\Lambda$. To understand the reduction $\overline{\rho}$ in a coordinate-free way, we consider the $\calO_K$-submodule $\lambda\Lambda$ defined by
\begin{equation*}
    \lambda\Lambda := \left\{\sum_{i=1}^n a_iv_i : a_i \in \lambda, v_i \in \Lambda, n \in \mathbb{N}\right\}.
\end{equation*}

Observe that $\lambda\Lambda$ is invariant under $\rho$. This invariance induces an action on the quotient module $\Lambda/\lambda\Lambda$, which is a $2$-dimensional vector space over the finite field $\calO_K/\lambda$. The reduction $\overline{\rho}$ corresponds precisely to the action of $W_F$ on $\Lambda/\lambda\Lambda$ after a choice of basis.

Any basis $\beta$ for $\Lambda$ can be reduced mod $\lambda\Lambda$ to obtain a basis for $\Lambda/\lambda\Lambda$, and a different choice of basis for $\Lambda$ yields an isomorphic reduction $\overline{\rho}$. This is to be expected; a change of basis for the $\calO_K$-lattice $\Lambda$ corresponds to conjugation by some $A \in \GL_2(\calO_K)$, and the representations $\rho$ and $A\rho A^{-1}$ remain isomorphic after reduction.

Returning to the question we have posed at the start of this section, recall the correspondence between continuous $\ell$-adic representations of $W_F$ and $\ell$-adic Weil--Deligne representations. Our strategy for answering the question will be to take an arbitrary pair of semisimple, continuous representations $\rho_1$ and $\rho_2$ of $W_F$; look at the corresponding Weil--Deligne representations; and use the classification of $2$-dimensional, $\Phi$-semisimple Weil--Deligne representations.

As we have mentioned, we assume throughout this paper that $p > 2$, which means that exceptional supercuspidal representations do not exist. This leaves us with principal series, special representations, and nonexceptional supercuspidal representations.

In \Cref{decomposability section}, we will first give the criteria for the reducibility and decomposability of reductions of principal series, nonexceptional supercuspidal representations, and special representations. For this, we will use the decomposability criterion for arbitrary $2$-dimensional representations, stated in \Cref{thm: decomposability criterion for reducible representations}.

In \Cref{comparison section}, we will compare the reductions of pairs of continuous $\ell$-adic representations of $W_F$ for which the corresponding Weil--Deligne representations are principal series, nonexceptional supercuspidal representations, or special representations. It is typically more difficult to compare reductions that are indecomposable. For this, we will use the description of when two $2$-dimensional representations that are reducible and indecomposable are isomorphic, stated in \Cref{thm: isomorphism of indecomposable representations}. We will apply this description to all possible pairings of principal series, special representations, and nonexceptional supercuspidal representations.

One motivation for studying when two $\ell$-adic representations of $W_F$ are isomorphic under reduction comes from the {\it modular method} for resolving Diophantine equations. Let $K$ be a totally real field. In this method, we attach to a putative solution $(a,b,c)$ of a ternary Diophantine equation a Frey abelian variety $A/K$ which gives rise to a $\lambda$-adic representation $\rho_{A,\lambda} : G_K \rightarrow \GL_2(K_\lambda)$ where $\lambda \mid \ell$ is a prime of $K$. The reduction $\rhobar_{A,\lambda}$ is shown to be isomorphic to the reduction $\rhobar_{f,\lambda'}$ of the $\lambda'$-adic representation attached to finitely many Hilbert newforms $f$.

In order to show the non-existence of a putative solution $(a,b,c)$ we would like to show that $\rhobar_{A,\lambda} \not\simeq \rhobar_{f,\lambda'}$ for any $f$ and $\lambda, \lambda'$, that is, we would like to distinguish residual representations. One way to do this is to restrict to a decomposition group at $p$. Regarding $K_\lambda$ and $K_{\lambda'}$ as subfields of $\overline{\Q}_\ell$ and restricting further from a decomposition group to a Weil group, this question then becomes the main question of this paper. A variant for distinguishing residual representations which is often used in the modular method is to ask if we already have $\rhobar_{A,\lambda} \not\simeq \rhobar_{f,\lambda'}$ after restriction to inertia.

\section{Decomposability of Reductions of $\ell$-adic Representations of the Weil Group}
\label{decomposability section}

Let $p$ and $\ell$ be odd primes with $\ell \neq p$. Let $F$ be a finite extension of $\Q_p$ with ring of integers $\calO_F$, maximal ideal $\p$, and residue field $k_F$ of order $q$. Let $E$ be a quadratic extension of $F$, with corresponding ring of integers $\calO_E$ and residue field $k_E$. We denote the absolute Galois group, the Weil group, the inertia group, and the wild inertia group of $F$ by $G_F, W_F, I_F$, and $P_F$, respectively.

Let $K$ be a finite extension of $\Q_\ell$, with ring of integers $\calO_K$, maximal ideal $\lambda$, and residue field $k_K$.

By $\omega$, we denote the $1$-dimensional representation of $W_F$ corresponding to the absolute value on $F^\times$ via local class field theory. Fixing a Frobenius element $\Phi$, $\omega$ is explicitly given by $\omega(\Phi^k\tau) = q^{-k}$ for any $\tau \in I_F, k \in \Z$. We denote by $t_\ell: I_F \rightarrow \Z_\ell$ a fixed continuous, surjective homomorphism.

When considering a supercuspidal representation $\rho$ induced from $W_E$, we use $\sigma \in W_F \setminus W_E$ to denote a fixed representative of the nontrivial coset of $W_E$ in $W_F$.

In this section, we will give the criteria for the reducibility and decomposability of reductions of $2$-dimensional, semisimple, continuous representations of $W_F$. We can identify such representations with $\Phi$-semisimple Weil--Deligne representations of $W_F$. As $p > 2$, there are three types of Weil--Deligne representations: principal series, special representations, and nonexceptional supercuspidal representations. We will analyze each type and describe when the associated continuous representation has a reducible or decomposable reduction.

It is relatively straightforward to show that only representations whose associated Weil--Deligne representation is a nonexceptional supercuspidal representation can have irreducible reductions. Thus, the main goal of this section is to describe when reductions that are reducible are decomposable. For this, we will make use of the following, more general, theorem:

\begin{theorem}
\label{thm: decomposability criterion for reducible representations}
    Let $G$ be a group and $\rho:G \rightarrow \GL_2(L)$ be a representation of $G$ given by
    \begin{equation*}
        \rho(g) = \begin{pmatrix}
            \chi_1(g) & \psi(g) \\
            0 & \chi_2(g)
        \end{pmatrix},
    \end{equation*}
    where $\psi:G \rightarrow L$ is a function and $\chi_1, \chi_2$ are $1$-dimensional representations of $G$ over the field $L$. Then $\rho$ is decomposable if and only if $\psi = c(\chi_2-\chi_1)$ for some $c \in L$.
\end{theorem}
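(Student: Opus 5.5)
We argue directly with the line $\ell_1 = L e_1$, which is $G$-stable by the upper-triangular shape of $\rho$. Decomposability of a $2$-dimensional representation with an invariant line means that line has a $G$-stable complement. Any complement of $\ell_1$ is a line spanned by a vector of the form $v = -c e_1 + e_2$ for a unique $c \in L$, since its $e_2$-coordinate must be nonzero and we may normalize it to $1$. The plan is to show that such a complement is $G$-stable exactly when $\psi = c(\chi_2 - \chi_1)$.

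First I will compute
\begin{equation*}
\rho(g)v = \bigl(-c\chi_1(g) + \psi(g)\bigr)e_1 + \chi_2(g)e_2 .
\end{equation*}
This vector lies in $Lv$ if and only if it equals $\chi_2(g)v$, because the $e_2$-coordinate forces the scalar to be $\chi_2(g)$. That condition reads $\psi(g) - c\chi_1(g) = -c\chi_2(g)$, i.e. $\psi(g) = c(\chi_1(g) - \chi_2(g))$. Replacing $c$ by $-c$ (equivalently using $v = ce_1 + e_2$) gives the sign convention of the statement, $\psi = c(\chi_2 - \chi_1)$. Conversely, if $\psi = c(\chi_2 - \chi_1)$, then $v = ce_1 + e_2$ spans a $G$-stable line complementary to $\ell_1$, so $\rho \cong \chi_1 \oplus \chi_2$. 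Concretely, conjugating by $\begin{pmatrix} 1 & c \\ 0 & 1 \end{pmatrix}$ diagonalizes $\rho$.

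The only subtle step is the forward direction: a priori "decomposable" means $L^2 = W_1 \oplus W_2$ with $G$-stable lines $W_i$, and neither $W_i$ need equal $\ell_1$. I will handle this as follows. If $\ell_1$ is one of the $W_i$, the other is a stable complement and the computation above applies. If not, $W_1$ and $W_2$ are both complements of $\ell_1$, since each is a line different from $\ell_1$; so $W_1$ is already a stable complement of $\ell_1$ and the computation again applies. Either way we obtain $c$ with $\psi = c(\chi_2 - \chi_1)$.

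Thus the proof reduces to the one-line matrix computation together with this case remark. I expect no real obstacle; the care needed is in fixing the sign convention for $c$ and in noting that every complement of $\ell_1$ can be normalized to have $e_2$-coordinate $1$.
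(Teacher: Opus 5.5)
Your proof is correct and complete. The three steps together give both implications:
\begin{itemize}
\item normalizing a complement of $Le_1$ to be spanned by $ce_1+e_2$;
\item reading off the eigenvalue $\chi_2(g)$ from the $e_2$-coordinate;
\item observing that in any splitting $L^2=W_1\oplus W_2$ at least one $W_i$ is a complement of $Le_1$.
\end{itemize}
This includes the degenerate case $\chi_1=\chi_2$, where the criterion reads $\psi=0$.

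The paper does not prove this statement; it only cites \cite[Theorem 2.4.22]{Deniz}, so there is no argument in the text to set yours against.

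The standard alternative is cohomological. The identity $\rho(gh)=\rho(g)\rho(h)$ says that $f(g)=\psi(g)\chi_2(g)^{-1}$ is a $1$-cocycle with values in $L(\chi_1\chi_2^{-1})$. The extension splits exactly when $f$ is a coboundary $m(\chi_1\chi_2^{-1}-1)$, which is your condition with $c=-m$.

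Your direct route is more elementary: it never uses the cocycle identity for $\psi$, and it makes explicit that a splitting need not contain $Le_1$. The cohomological route instead puts this statement in the same framework as \Cref{thm: isomorphism of indecomposable representations}. That result says two nonsplit extensions with the same characters are isomorphic exactly when their classes in $H^1(G,L(\chi_1\chi_2^{-1}))$ are proportional.
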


\begin{proof}
    See Suözer \cite[Theorem 2.4.22]{Deniz}.
\end{proof}

We will often refer to \Cref{thm: decomposability criterion for reducible representations} to determine when the reductions of continuous representations of $W_F$ are decomposable.

As mentioned at the start of this section, our strategy will be to use the bijection between semisimple, continuous representations of $W_F$ and $\Phi$-semisimple Weil--Deligne representations, which follows from the following theorem, often called ``Grothendieck's $\ell$-adic monodromy theorem'':

\begin{theorem}
\label{grothendieck's l-adic monodromy thm}
    Let $\rho:W_F \rightarrow \GL_n(\overline{\Q}_\ell)$ be a finite-dimensional continuous representation. There is a unique nilpotent endomorphism $N_\rho \in M_n(\overline{\Q}_\ell)$ such that
    \begin{equation*}
        \rho(\tau) = \exp(t_\ell(\tau)N_\rho)
    \end{equation*}
    for all elements $\tau$ of some open subgroup of $I_F$, where $t_\ell:I_F \rightarrow \Z_\ell$ is a fixed, continuous surjection.
\end{theorem}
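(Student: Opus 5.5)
The plan is to follow the standard proof of Grothendieck's monodromy theorem. Its main tool is a compactness argument that reduces to a pro-$\ell$ quotient of inertia, combined with the Frobenius-conjugation relation $\Phi\tau\Phi^{-1} = \tau^{q}$ (up to the choice of normalization) on the tame quotient. First, since $W_F$ is locally profinite and $\rho$ is continuous, the image $\rho(I_F)$ is compact. By a Baire category argument it lies in $\GL_n(K)$ for some finite extension $K/\Q_\ell$, and then, after conjugating, it stabilizes an $\calO_K$-lattice, so we may assume $\rho(I_F)\subset \GL_n(\calO_K)$. Next, take the open subgroup $U = 1 + \ell^2 M_n(\calO_K)$ of $\GL_n(\calO_K)$. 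This is a pro-$\ell$ group, and on it the logarithm and exponential are mutually inverse homeomorphisms onto $\ell^2 M_n(\calO_K)$. Set $I' = \rho^{-1}(U)\cap I_F$, which is open in $I_F$.

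Since $U$ is pro-$\ell$ and $p \neq \ell$, the restriction of $\rho$ to $I'$ kills the wild inertia $P_F\cap I'$ and factors through the maximal pro-$\ell$ quotient of the tame inertia of $I'$. That quotient is isomorphic to $\Z_\ell$ via $t_\ell$, up to an open finite-index sublattice. Hence there is an open subgroup $J\subseteq I'$ on which $\rho(\tau) = \rho(\tau_0)^{t_\ell(\tau)/c}$ for a fixed topological generator-like element $\tau_0$ with $t_\ell(\tau_0)=c\neq 0$. Define $N = c^{-1}\log\rho(\tau_0)$. Then $\rho(\tau) = \exp(t_\ell(\tau)N)$ on $J$, using continuity to extend from powers $\tau_0^{m}$ to $\Z_\ell$-powers. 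Here we use that $\log$ and $\exp$ convert $\Z_\ell$-powers into scalar multiples on $U$.

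The step I expect to be the main obstacle is nilpotence of $N$, and I would use Frobenius for it. For any lift $\Phi$ of Frobenius we have $t_\ell(\Phi\tau\Phi^{-1}) = q^{\pm1}t_\ell(\tau)$, and conjugation by $\Phi$ preserves $J$ after possibly shrinking $J$ to a normal open subgroup. Applying $\rho$ gives $\rho(\Phi)\exp(t_\ell(\tau)N)\rho(\Phi)^{-1} = \exp(q^{\pm1}t_\ell(\tau)N)$. Taking logarithms for $\tau$ with small $t_\ell(\tau)$ yields $\rho(\Phi) N\rho(\Phi)^{-1} = q^{\pm1}N$. Thus $N$ is conjugate to $q^{\pm1}N$, so its set of eigenvalues is stable under multiplication by $q$. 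Since this set is finite and $q$ is not a root of unity, all eigenvalues are $0$, and $N$ is nilpotent.

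Finally, uniqueness. If $N$ and $N'$ both work, they agree on some common open subgroup. Pick $\tau$ there with $t_\ell(\tau)\neq 0$; such $\tau$ exists because $t_\ell$ is surjective and the image of an open subgroup is open in $\Z_\ell$. Then $\exp(t_\ell(\tau)N)=\exp(t_\ell(\tau)N')$. Both $N$ and $N'$ are nilpotent, and $\exp$ is injective on nilpotent matrices, with inverse given by the finite logarithm series. Hence $t_\ell(\tau)N = t_\ell(\tau)N'$, and so $N = N'$.
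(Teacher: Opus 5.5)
Your proposal is correct and is essentially the standard argument of Bushnell--Henniart (32.5), which is all the paper offers here since it only cites that reference: conjugate so that $\rho(I_F)\subset\GL_n(\calO_K)$, use the pro-$\ell$ group $1+\ell^2M_n(\calO_K)$ to make $\rho$ factor through $t_\ell$ on an open subgroup, define $N$ by a logarithm, and deduce nilpotence from $\rho(\Phi)N\rho(\Phi)^{-1}=q^{\pm1}N$. One small point: you assert that $J$ can be shrunk to an open subgroup stable under conjugation by $\Phi$, which is true but needs an argument (normality in $I_F$ alone does not give it); it is simpler to take $\tau$ in the open subgroup $J\cap\Phi^{-1}J\Phi$, since the nilpotence step only needs $\tau$ and $\Phi\tau\Phi^{-1}$ to lie in $J$.
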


\begin{proof}
    See Bushnell and Henniart \cite[32.5 Theorem]{Bushnell-Henniart}.
\end{proof}

\begin{theorem}
\label{Weil--Deligne and continuous weil representation correspondence}
    Let $\operatorname{Rep}_{\overline{\Q}_\ell}(W_F)$ and $\operatorname{WD-Rep}_{\overline{\Q}_\ell}(W_F)$ be the categories of finite-dimensional continuous representations and Weil--Deligne representations of $W_F$ over $\overline{\Q}_\ell$, respectively. Let $\Phi \in W_F$ be a Frobenius element and $t_\ell:I_F \rightarrow \Z_\ell$ be a continuous surjection. Define the representation $\rho_\Phi$ by
    \begin{equation}
    \label{Weil--Deligne correspondence bijection}
        \rho_\Phi(\Phi^k\tau) = \rho(\Phi^k\tau)\exp(-t_\ell(\tau)N_\rho)
    \end{equation}
    for all $k \in \Z$ and $\tau \in I_F$, where $N_\rho$ is defined as in \Cref{grothendieck's l-adic monodromy thm}. The assignment
    \begin{equation*}
        \rho \mapsto (\rho_\Phi, N_\rho)
    \end{equation*}
    induces an equivalence of categories
    \begin{equation*}
        \operatorname{Rep}_{\overline{\Q}_\ell}(W_F) \rightarrow \operatorname{WD-Rep}_{\overline{\Q}_\ell}(W_F).
    \end{equation*}
    The isomorphism class of the Weil--Deligne representation $(\rho_\Phi, N_\rho)$ depends only on the isomorphism class of $\rho$ and not on the choices of $\Phi$ and $t_\ell$. Further, this assignment identifies semisimple representations of $W_F$ with $\Phi$-semisimple Weil--Deligne representations.
\end{theorem}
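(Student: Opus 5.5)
The plan is to verify directly that the assignment $\rho \mapsto (\rho_\Phi, N_\rho)$ is well defined, fully faithful and essentially surjective, and then to handle the independence of choices and the semisimplicity statement separately. Grothendieck's monodromy theorem (\Cref{grothendieck's l-adic monodromy thm}) supplies $N_\rho$, unique, with $\rho(\tau)=\exp(t_\ell(\tau)N_\rho)$ on an open subgroup $J\subset I_F$.

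\textbf{Step 1: $(\rho_\Phi,N_\rho)$ is a Weil--Deligne representation.}
\begin{itemize}
\item Conjugate the identity $\rho(\tau)=\exp(t_\ell(\tau)N_\rho)$ by $\rho(w)$ for $w\in W_F$. Use $t_\ell(w\tau w^{-1})=\omega(w)^{-1}t_\ell(\tau)$; since $\omega(\Phi)=q^{-1}$, this encodes the tame relation $\Phi\tau\Phi^{-1}=\tau^{q^{-1}}$ on the pro-$\ell$ quotient.
\item By uniqueness of $N_\rho$, this yields $\rho(w)N_\rho\rho(w)^{-1}=\omega(w)N_\rho$.
\item Next check that $\rho_\Phi$ is a homomorphism. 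The key point is that $\tau\mapsto \rho(\tau)\exp(-t_\ell(\tau)N_\rho)$ is a homomorphism on $I_F$: $N_\rho$ commutes with $\rho(I_F)$ because $\omega$ is trivial on $I_F$, and $t_\ell$ is additive.
\item Extend to $\Phi^k\tau$ using the conjugation relation, together with $t_\ell(\Phi^k\tau\Phi^{-k})=q^{-k}t_\ell(\tau)$.
\item Finally, $\rho_\Phi$ is trivial on $J$, so it is continuous for the discrete topology on $\overline{\Q}_\ell^n$. Hence $(\rho_\Phi,N_\rho)$ is a Weil--Deligne representation.
\end{itemize}

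\textbf{Step 2: the inverse construction.}
\begin{itemize}
\item Given $(r,N)$, set $\rho(\Phi^k\tau)=r(\Phi^k\tau)\exp(t_\ell(\tau)N)$.
\item The same computations in reverse show that $\rho$ is a homomorphism.
\item It is continuous, since $r$ has open kernel and $\exp(t_\ell N)$ is continuous.
\item Uniqueness of $N$ shows the two constructions are mutually inverse.
\end{itemize}

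\textbf{Step 3: full faithfulness.} A linear map $f$ intertwines $\rho$ and $\rho'$ if and only if it intertwines both the $\rho_\Phi$'s and the $N$'s. To see that it intertwines the $N$'s, apply uniqueness of $N$ to $f\rho f^{-1}$ restricted to a common open subgroup, taking logarithms on the unipotent parts. Fullness and faithfulness then follow.

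\textbf{Step 4: independence of choices.}
\begin{itemize}
\item Changing $t_\ell$ to $ut_\ell$ with $u\in\Z_\ell^\times$ replaces $N$ by $u^{-1}N$. The pair $(r,N)\cong(r,u^{-1}N)$ via an explicit intertwiner: choose $\alpha$ with $\omega(\Phi)$-eigen-grading of $N$, e.g. conjugate by a semisimple element acting by $u^{j}$ on the $j$-th step of a grading where $N$ has degree one.
\item Changing $\Phi$ to $\Phi'=\Phi\tau_0$ gives $r'(w)=r(w)\exp(c(w)N)$, with $c$ a cocycle. Show $r'\cong r$ via conjugation by $\exp(aN)$ for a suitable $a$.
\end{itemize}
I expect this bookkeeping to be the most fiddly step, and I would cite \cite[32.5--32.6]{Bushnell-Henniart} for it.

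\textbf{Step 5: semisimplicity.} Suppose $\rho$ is semisimple. Then $\rho(\Phi)$ is semisimple. Since $N_\rho$ commutes with $\rho$ on $I_F$, one gets $\rho_\Phi(\Phi)=\rho(\Phi)$, which shows $\Phi$-semisimplicity. For the converse, one also needs $N=0$ on irreducible continuous $\rho$: $\ker N$ is $\rho$-stable by the relation $\rho(w)N\rho(w)^{-1}=\omega(w)N$, so it is $0$ or everything, and $N$ is nilpotent, so $N=0$. Thus for semisimple $\rho$ we get $N=0$ and $\rho_\Phi=\rho$, and the correspondence matches semisimple continuous representations with $\Phi$-semisimple ones having $N=0$. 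Here I expect the main obstacle: in the general sense, a $\Phi$-semisimple WD representation may have $N\neq0$, and its associated $\rho$ is then not semisimple. So I would interpret the final claim as the standard Frobenius-semisimple matching, in which $\rho(\Phi)$ is semisimple if and only if $\rho_\Phi(\Phi)$ is, and prove exactly that using the Jordan decomposition of $\rho(\Phi)$ commuting with the unipotent part $\exp(t_\ell N)$.
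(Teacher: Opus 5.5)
Your outline is the standard argument of \cite[32.5--32.7]{Bushnell-Henniart}. The paper's proof is only that citation, so in substance you are on the same route. You are also right about the last clause. A semisimple continuous $\rho$ has $N_\rho=0$: apply your $\ker N$ argument on each irreducible summand, and note that $N_{\rho_1\oplus\rho_2}=N_{\rho_1}\oplus N_{\rho_2}$ by uniqueness. Hence a special representation, where $N\neq 0$, comes from a $\rho$ that is \emph{not} semisimple. The correct statement matches $\rho$ with $\rho(\Phi)$ semisimple to $\Phi$-semisimple Weil--Deligne representations. That version is immediate, because $\rho_\Phi(\Phi^k)=\rho(\Phi^k)$ by definition; no Jordan decomposition argument is needed.

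Several details need repair.
\begin{enumerate}[(a)]
\item \textbf{Sign in Step 1.} With $\omega(\Phi)=q^{-1}$, the relation is $t_\ell(w\tau w^{-1})=\omega(w)\,t_\ell(\tau)$, not $\omega(w)^{-1}t_\ell(\tau)$. Your later formulas, $q^{-k}$ and $\rho(w)N_\rho\rho(w)^{-1}=\omega(w)N_\rho$, already use the correct sign.
\item \textbf{Step 3.} Morphisms need not be invertible, so $f\rho f^{-1}$ is unavailable. Instead, take $\tau_0$ in both open subgroups with $t_\ell(\tau_0)\neq 0$. Then $N_\rho=t_\ell(\tau_0)^{-1}\log\rho(\tau_0)$, and $\log$ of a unipotent matrix is a polynomial in it. So any $f$ intertwining $\rho$ and $\rho'$ also intertwines $N_\rho$ and $N_{\rho'}$.
\item \textbf{Step 4, change of $t_\ell$.} The operator $\alpha$ must commute with all of $r(W_F)$. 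Generalized eigenspaces of $r(\Phi)$ itself need not be $r(I_F)$-stable, so an arbitrary grading in which $N$ has degree one does not give an isomorphism of Weil--Deligne representations.
  \begin{itemize}
  \item Choose $m\ge 1$ with $r(\Phi^m)$ central in $r(W_F)$. This is possible because $r(I_F)$ is finite and normalized by $r(\Phi)$.
  \item Grade $V$ by the generalized eigenspaces $V_\mu$ of $r(\Phi^m)$. These are $W_F$-stable, and $NV_\mu\subseteq V_{q^{-m}\mu}$.
  \item On each $q^{m\Z}$-string of eigenvalues, assign degrees increasing by one along $\mu\mapsto q^{-m}\mu$, and let $\alpha$ act on each graded piece by the corresponding power of $u$.
  \end{itemize}
\item \textbf{Step 4, change of Frobenius.} Your treatment is right. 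For $\Phi'=\Phi\tau_0$ one gets $\rho_{\Phi'}(\Phi^k\tau)=\rho_\Phi(\Phi^k\tau)\exp\bigl(\tfrac{q^k-1}{q-1}t_\ell(\tau_0)N\bigr)$. This is removed by conjugating with $\exp(aN)$, where $a=t_\ell(\tau_0)/(q-1)$.
\item \textbf{Step 5.} The claim ``$\rho$ semisimple $\Rightarrow$ $\rho(\Phi)$ semisimple'' should come after you show $N_\rho=0$. Then $\rho$ is smooth, and $\rho(\Phi^m)$ is central, hence scalar on each irreducible summand by Schur. So $\rho(\Phi)$ is annihilated by a separable polynomial.
\end{enumerate}
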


\begin{proof}
    See Bushnell and Henniart \cite[32.6 Theorem, 32.7 Theorem]{Bushnell-Henniart}.
\end{proof}

Since our goal is to go through each type of Weil--Deligne representation and study the corresponding continuous representation of $W_F$, we rewrite \eqref{Weil--Deligne correspondence bijection} as
\begin{equation*}
    \rho(\Phi^k \tau) = \rho_\Phi(\Phi^k \tau)\exp(t_\ell(\tau)N_\rho).
\end{equation*}
Note that we have $\rho = \rho_\Phi$ for principal series and nonexceptional supercuspidal representations since $N_\rho = 0$. Thus, we will abuse notation slightly and call $\rho$ a principal series or a nonexceptional supercuspidal representation to mean that the associated Weil--Deligne representation $(\rho_\Phi, 0) = (\rho, 0)$ is a principal series or a nonexceptional supercuspidal representation. It is only for special representations that we must be careful to distinguish between $\rho$ and $\rho_\Phi$ as special representations have a nonzero nilpotent operator.

We start with reductions of principal series:

\begin{theorem}
\label{thm:decomposability criterion for reductions of principal series}
    Let $\rho:W_F \rightarrow \GL_2(\overline{\Q}_\ell)$ be a principal series such that $\rho(W_F) \subset \GL_2(\calO_K)$. Then $\rho \cong \chi_1 \oplus \chi_2$, where $\chi_1, \chi_2$ are $1$-dimensional representations of $W_F$. Using Iwasawa decomposition, we can assume without loss of generality that there exists an upper triangular matrix
    \begin{equation*}
        U=\begin{pmatrix}
            a & b \\
            0 & d
        \end{pmatrix} \in \GL_2(K)
    \end{equation*}
    such that
    \begin{equation*}
        \rho(g) = U\begin{pmatrix}
            \chi_1(g) & 0 \\
            0 & \chi_2(g)
        \end{pmatrix}U^{-1}
    \end{equation*}
    for all $g \in W_F$. Then $\overline{\rho}$ is reducible. Further,

    \begin{enumerate}[(i)]
        \item if $\chi_1 = \chi_2$, then $\overline{\rho}$ decomposes as $\overline{\chi}_1 \oplus \overline{\chi}_2$.

        \item if $\overline{\chi}_1 \neq \overline{\chi}_2$, then $\overline{\rho}$ decomposes as $\overline{\chi}_1 \oplus \overline{\chi}_2$.

        \item if $\chi_1 \neq \chi_2$ and $\overline{\chi}_1 = \overline{\chi}_2$, then $\overline{\rho}$ is decomposable (and decomposes as $\overline{\chi}_1 \oplus \overline{\chi}_2$) if and only if $v(b/d) \geq 1-i$, where $i$ is the maximal nonnegative integer such that
        \begin{equation*}
            \chi_1(g) \equiv \chi_2(g) \pmod{\lambda^i}
        \end{equation*}
        for all $g \in W_F$.
    \end{enumerate}

    Equivalently, $\overline{\rho}$ is indecomposable if and only if $\chi_1 \neq \chi_2, \overline{\chi}_1 = \overline{\chi}_2$, and $v(b/d) = -i$.
\end{theorem}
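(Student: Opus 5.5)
Compute $\rho(g)$ explicitly. Writing $U^{-1} = \frac{1}{ad}\begin{pmatrix} d & -b \\ 0 & a\end{pmatrix}$, we get
\begin{equation*}
    \rho(g) = \begin{pmatrix} \chi_1(g) & \frac{b}{d}\bigl(\chi_2(g)-\chi_1(g)\bigr) \\ 0 & \chi_2(g)\end{pmatrix}.
\end{equation*}
The Iwasawa reduction is justified first: $\rho \cong \chi_1\oplus\chi_2$ by the principal series hypothesis. The change-of-basis matrix lies in $\GL_2(K)$ after enlarging $K$ if needed, since the $\chi_i$ take values in $\calO_K^\times$; this uses that $\rho(W_F)$ is compact and that $\chi_1(g),\chi_2(g)$ are eigenvalues of integral matrices. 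Write it as $kU$ with $k\in\GL_2(\calO_K)$ and $U$ upper triangular. Conjugating by $k^{-1}$ does not change $\overline\rho$ up to isomorphism, so we may assume the displayed form with $\rho(g)\in\GL_2(\calO_K)$. In particular $\chi_i(g)\in\calO_K$, and $\psi(g):=\frac{b}{d}(\chi_2(g)-\chi_1(g))\in\calO_K$ for all $g$. Reducing mod $\lambda$ gives an upper triangular $\overline\rho$ with diagonal $\overline\chi_1,\overline\chi_2$, so $\overline\rho$ is reducible.

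Next apply \Cref{thm: decomposability criterion for reducible representations} over $L=k_K$. We have $\overline\rho$ decomposable iff $\overline\psi = \bar c(\overline\chi_2-\overline\chi_1)$ for some $\bar c\in k_K$. In case (i), $\psi\equiv 0$, so $\overline\rho$ is diagonal. In case (ii), pick $g_0$ with $\overline\chi_1(g_0)\ne\overline\chi_2(g_0)$. Then $\chi_2(g_0)-\chi_1(g_0)$ is a unit, and the integrality of $\psi(g_0)$ forces $b/d\in\calO_K$. Hence $\overline\psi = \overline{(b/d)}(\overline\chi_2-\overline\chi_1)$ and the criterion applies with $c=\overline{b/d}$.

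Case (iii) is the main point. There $\overline\chi_2-\overline\chi_1=0$, so decomposability is equivalent to $\overline\psi\equiv 0$, i.e.\ $v(b/d)+v(\chi_2(g)-\chi_1(g))\ge 1$ for all $g$. By definition of $i$ (finite, since $\chi_1\neq\chi_2$ and $\bigcap\lambda^n=0$), $\min_g v(\chi_2(g)-\chi_1(g)) = i$. This minimum is attained because valuations are integers, and it is at least $1$ since $\overline\chi_1=\overline\chi_2$. So decomposability holds iff $v(b/d)\ge 1-i$. The same minimum shows integrality of $\rho$ forces $v(b/d)\ge -i$. Therefore failure of decomposability means exactly $v(b/d)=-i$; when $b=0$ we set $v(0)=\infty$. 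This gives the final equivalence, since in cases (i) and (ii) $\overline\rho$ is always decomposable. The only delicate step is the initial normalization: ensuring $U$ can be taken over $K$ with $\rho$ itself integral in this basis. I would handle it by the Iwasawa decomposition $\GL_2(K)=\GL_2(\calO_K)B(K)$ as indicated, noting that the eigenvector basis is defined over $K$ once $\chi_1,\chi_2$ are $K$-valued.
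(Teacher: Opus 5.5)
Your proof is correct, and it follows the route the paper relies on. The paper itself only cites Suözer's thesis, but it uses the same ingredients elsewhere: conjugate into the integral form $\begin{pmatrix}\chi_1 & \frac{b}{d}(\chi_2-\chi_1)\\ 0&\chi_2\end{pmatrix}$, apply \Cref{thm: decomposability criterion for reducible representations} over $k_K$, and use integrality of the off-diagonal entry to get $b/d\in\calO_K$ in case (ii) and $v(b/d)\ge -i$ in case (iii), just as $b_2/d_2\in\calO_K$ is deduced in \Cref{thm: special vs supercuspidal comparison unramified case}. One phrase is loose: $\rho(W_F)$ need not be compact, since it need not be closed. What you actually use is that each $\chi_j(g)$ is a unit root of a characteristic polynomial over $\calO_K$, so it lies in the compositum of the finitely many quadratic extensions of $K$.
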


\begin{proof}
    See Suözer \cite[Theorem 4.1.7]{Deniz}.
\end{proof}

Next, we look at reductions of nonexceptional supercuspidal representations. Unlike reductions of principal series, those of nonexceptional supercuspidal representations need not be reducible; we start by characterizing the conditions under which they are irreducible. Perhaps unsurprisingly, $\overline{\rho}$ is irreducible if and only if $\overline{\chi} \neq \overline{\chi}^\sigma$:

\begin{proposition}
\label{prop: irreducibility criterion for reductions of supercuspidal representations}
    Let $\rho:W_F \rightarrow \GL_2(\overline{\Q}_\ell)$ be a nonexceptional supercuspidal representation such that $\rho(W_F) \subset \GL_2(\calO_K)$. Then $\rho \cong \Ind_{W_E}^{W_F}\xi$, where $\xi$ is a $1$-dimensional representation of $W_E$ such that $\xi \neq \xi^\sigma$. Using Iwasawa decomposition, can assume without loss of generality that there exists an upper triangular matrix
    \begin{equation*}
        U = \begin{pmatrix}
            a & b \\
            0 & d
        \end{pmatrix} \in \GL_2(K)
    \end{equation*}
    such that
    \begin{equation*}
        \rho(g) = U\Ind_{W_E}^{W_F}\xi(g) U^{-1}
    \end{equation*}
    for all $g \in W_F$.

    Then $\overline{\rho}$ is irreducible if and only if $\overline{\xi} \neq \overline{\xi}^\sigma$. When $\overline{\rho}$ is irreducible, we have $d/a \in \calO_K^\times$ and $\overline{\rho} \cong \Ind_{W_E}^{W_F} \overline{\xi}$.
\end{proposition}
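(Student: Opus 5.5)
The plan is to compare characteristic polynomials via Brauer--Nesbitt, and then read off the valuation of $d/a$ from the lower-left entries of $\rho(g)=UMU^{-1}$. Throughout, "irreducible" is taken to mean irreducible over an algebraic closure, or over a finite extension of $k_K$ large enough to contain the needed square roots; I expect this field-of-definition point to be the main place needing care.

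\textbf{Setup.} Write $M(g)=\Ind_{W_E}^{W_F}\xi(g)$ in the standard basis $\{e,\sigma e\}$. For $h\in W_E$ this gives
\begin{equation*}
M(h)=\begin{pmatrix}\xi(h)&0\\0&\xi^\sigma(h)\end{pmatrix},\qquad M(\sigma)=\begin{pmatrix}0&\xi(\sigma^2)\\1&0\end{pmatrix},
\end{equation*}
and $M(h\sigma)=M(h)M(\sigma)$.
\begin{enumerate}[(a)]
\item Since $\rho(W_F)\subset\GL_2(\calO_K)$, its eigenvalues are integral units. Hence $\xi$ takes values in $\calO_K^\times$ after a harmless finite extension, and every entry of $M(g)$ lies in $\calO_K$.
\item The characteristic polynomial of $\overline{\rho}(g)$ is the reduction of that of $M(g)$, which is the characteristic polynomial of $\Ind\overline{\xi}(g)$.
\item Since $\ell$ is odd and $[W_F:W_E]=2$, the representation $\Ind_{W_E}^{W_F}\overline{\xi}$ is semisimple. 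Brauer--Nesbitt then gives $\overline{\rho}^{ss}\cong\Ind_{W_E}^{W_F}\overline{\xi}$.
\end{enumerate}

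\textbf{Case $\overline{\xi}\neq\overline{\xi}^\sigma$.} Suppose $\overline{\rho}$ had a $W_F$-stable line, with character $\chi$. Then $\chi|_{W_E}$ occurs in $\overline{\rho}^{ss}|_{W_E}=\overline{\xi}\oplus\overline{\xi}^\sigma$, so $\chi|_{W_E}\in\{\overline{\xi},\overline{\xi}^\sigma\}$. But the restriction of a character of $W_F$ is $\sigma$-invariant, which would force $\overline{\xi}=\overline{\xi}^\sigma$. So $\overline{\rho}$ is irreducible. It is then semisimple, and $\overline{\rho}\cong\Ind\overline{\xi}$ by Brauer--Nesbitt.

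\textbf{Case $\overline{\xi}=\overline{\xi}^\sigma$.} Here $\overline{\xi}$ extends to a character $\chi$ of $W_F$: set $\chi(\sigma)$ equal to a square root of $\overline{\xi}(\sigma^2)$, enlarging the residue field if necessary. $\sigma$-invariance makes this well defined on $W_E\sqcup W_E\sigma$. Then
\begin{equation*}
\Ind\overline{\xi}=\Ind(\chi|_{W_E})\cong\chi\otimes\Ind 1\cong\chi\oplus\chi\eta,
\end{equation*}
where $\eta$ is the quadratic character of $W_F/W_E$. So $\overline{\rho}^{ss}$ is a sum of characters, and $\overline{\rho}$ is reducible.

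\textbf{Integrality of $d/a$.} Assume $\overline{\rho}$ is irreducible. A direct computation shows that the lower-left entry of $UM(g)U^{-1}$ equals $(d/a)M(g)_{21}$.
\begin{enumerate}[(a)]
\item At $g=\sigma$ we have $M(\sigma)_{21}=1$, and $\rho(\sigma)$ is integral, so $d/a\in\calO_K$.
\item Conversely, suppose $v(d/a)\ge 1$. Since every $M(g)_{21}$ is integral (it is $0$ on $W_E$ and $\xi(h)$ on $h\sigma$), all lower-left entries of $\rho(g)$ lie in $\lambda$. Then $\overline{\rho}$ is upper triangular, hence reducible, a contradiction.
\end{enumerate}
Therefore $d/a\in\calO_K^\times$.
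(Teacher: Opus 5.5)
Your proof is correct, but it takes a different route from the paper. The paper only cites Suözer's thesis for this proposition. The surrounding text, however, later refers back to ``the proof'' for $d/a\in\calO_K$ and then handles $d/a\in\calO_K^\times$ and $d/a\in\lambda$ separately. That indicates the cited argument is a direct computation with the explicit entries of $U\Ind_{W_E}^{W_F}\xi(g)U^{-1}$:
\begin{itemize}
\item integrality of $\rho(\sigma)$ gives $d/a\in\calO_K$;
\item if $d/a\in\lambda$, then $\overline{\rho}$ is upper triangular, and its diagonal characters of $W_F$ restrict to $\overline{\xi}$ and $\overline{\xi}^\sigma$ on $W_E$, which forces $\overline{\xi}=\overline{\xi}^\sigma$;
\item if $d/a$ is a unit, integrality of $\rho(\sigma)$ also makes $b/d$ integral, so one conjugates back to $\overline{\rho}\cong\Ind_{W_E}^{W_F}\overline{\xi}$ and applies Mackey's criterion.
\end{itemize}
You instead obtain $\overline{\rho}^{ss}\cong\Ind_{W_E}^{W_F}\overline{\xi}$ from Brauer--Nesbitt. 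This makes the irreducibility criterion independent of the lattice and of the Iwasawa form, and you use $U$ only for the claim about $d/a$. Your route is shorter and more conceptual. The paper's case split gives more: it shows $\overline{\rho}\cong\Ind_{W_E}^{W_F}\overline{\xi}$ whenever $d/a$ is a unit, reducible or not, and gives an explicit triangular shape when $d/a\in\lambda$. Those are exactly the inputs the subsequent decomposability results use.

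Your caveat about the field is necessary, not just cautious. Suppose $\overline{\xi}=\overline{\xi}^\sigma$, $d/a\in\calO_K^\times$, and $\overline{\xi}(\sigma^2)$ is a nonsquare in $k_K$. Then $\overline{\rho}\cong\Ind_{W_E}^{W_F}\overline{\xi}$ is irreducible over $k_K$, because $\overline{\rho}(\sigma)$ has no eigenvalue there. A concrete instance: $\ell=3$, $K=\Q_3(\zeta_3)$, $F=\Q_5$, $E/F$ unramified, $U=1$, $\xi|_{I_E}$ of order $3$, and $\xi(\sigma^2)=-1$. So the proposition must be read as a statement about absolute irreducibility, consistent with the paper's later use of $\sqrt{\xi(\sigma^2)}$. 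One minor slip: $M(h\sigma)_{21}=\xi^\sigma(h)$, not $\xi(h)$. This is harmless, since only its integrality is used.
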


\begin{proof}
    See Suözer \cite[Proposition 4.1.9]{Deniz}.
\end{proof}

For the rest of our analysis of reductions of nonexceptional supercuspidal representations, we will focus on the case where $\overline{\rho}$ is reducible. Our goal is to apply \Cref{thm: decomposability criterion for reducible representations} to $\overline{\rho}$ to obtain a decomposability criterion. As we have seen in the proof of \Cref{prop: irreducibility criterion for reductions of supercuspidal representations}, we have $d/a \in \calO_K$ . The cases $d/a \in \calO_K^\times$ and $d/a \in \lambda$ require separate treatment.

If $d/a \in \calO_K^\times$, then $\overline{\rho}$ is an induced representation:

\begin{proposition}
\label{supercuspidal reduction: unit d/a implies induced representation}
    Let $\rho:W_F \rightarrow \GL_2(\overline{\Q}_\ell)$ be a nonexceptional supercuspidal representation such that $\rho(W_F) \subset \GL_2(\calO_K)$. Then $\rho \cong \Ind_{W_E}^{W_F}\xi$, where $\xi$ is a $1$-dimensional representation of $W_E$ such that $\xi \neq \xi^\sigma$. Using Iwasawa decomposition, we can assume without loss of generality that there exists an upper triangular matrix
    \begin{equation*}
        U = \begin{pmatrix}
            a & b \\
            0 & d
        \end{pmatrix} \in \GL_2(K)
    \end{equation*}
    such that
    \begin{equation*}
        \rho(g) = U\Ind_{W_E}^{W_F}\xi(g) U^{-1}
    \end{equation*}
    for all $g \in W_F$.

    Suppose that $\overline{\rho}$ is reducible and $d/a \in \calO_K^\times$. Then $\overline{\rho} \cong \Ind_{W_E}^{W_F}\overline{\xi}$, which decomposes as $\overline{\theta}_1 \oplus \overline{\theta}_2$, where
    \begin{alignat*}{2}
        \theta_1(g) &= \begin{cases} \xi(h), \\ \sqrt{\xi(\sigma^2)}\xi(h), \end{cases} 
        &\quad& \begin{aligned} & g=h \in W_E \\ & g = h \sigma, h \in W_E \end{aligned} \\[1.5ex]
        \theta_2(g) &= \begin{cases} \xi(h), \\ -\sqrt{\xi(\sigma^2)}\xi(h), \end{cases} 
        &\quad& \begin{aligned} & g=h \in W_E \\ & g = h \sigma, h \in W_E \end{aligned}
    \end{alignat*}
\end{proposition}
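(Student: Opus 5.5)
Fix the standard model of the induced representation. Take the basis of $\Ind_{W_E}^{W_F}\xi$ given by the functions supported on $W_E$ and on $W_E\sigma$. In this basis, $h\in W_E$ acts by $\operatorname{diag}(\xi(h),\xi^\sigma(h))$, where $\xi^\sigma(h)=\xi(\sigma h\sigma^{-1})$, and
\begin{equation*}
    \Ind_{W_E}^{W_F}\xi(\sigma)=\begin{pmatrix} 0 & \xi(\sigma^2)\\ 1 & 0\end{pmatrix}.
\end{equation*}
Since $\rho(W_F)\subset\GL_2(\calO_K)$ and $W_E$ has finite index, the character $\xi$ takes values in $\calO_K^\times$, after enlarging $K$ if needed. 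We then conjugate by $U$ and write out $\rho(\sigma)=U\Ind\xi(\sigma)U^{-1}$ and $\rho(h)$ explicitly. The entries are rational in $a,b,d$, and the denominators are powers of $a$ and $d$.

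\textbf{Step 1: reduce to $U$ diagonal.} When $d/a\in\calO_K^\times$, scaling $U$ does not change the conjugation, so we may take $a=1$ and $d\in\calO_K^\times$. The entries of $\rho$ then involve $b$ only through expressions such as $b(\xi^\sigma(h)-\xi(h))/d$ and, for $\sigma$, $-b/d$ and $(\xi(\sigma^2)d^2-b^2)/d$. The point is that $\overline{U}$ need not be invertible over $k_K$ when $b\notin\calO_K$, so we cannot simply conjugate by $\overline{U}$. The $(2,1)$ entry of $\rho(\sigma)$ is $1/d$, which is a unit. Integrality of the $(1,1)$ entry $-b/d$ then forces $b\in\calO_K$, so $U\in\GL_2(\calO_K)$. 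It follows that $\rho$ is $\GL_2(\calO_K)$-conjugate to $\Ind\xi$, and so $\overline{\rho}\cong\Ind_{W_E}^{W_F}\overline{\xi}$, realized by the standard matrices reduced mod $\lambda$.

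\textbf{Step 2: decompose $\Ind\overline{\xi}$.} Since $\overline{\rho}$ is reducible, the converse direction of \Cref{prop: irreducibility criterion for reductions of supercuspidal representations} gives $\overline{\xi}=\overline{\xi}^\sigma$. After adjoining $s=\sqrt{\xi(\sigma^2)}$, which is a unit, we check that $\theta_1$ and $\theta_2$ are characters of $W_F$ mod $\lambda$. Multiplicativity on $W_E$ is clear. Multiplicativity on products like $(h\sigma)(h'\sigma)=h\,\sigma h'\sigma^{-1}\sigma^2$ reduces to $\xi(\sigma h'\sigma^{-1})\equiv\xi(h')$, which is exactly $\overline{\xi}=\overline{\xi}^\sigma$. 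The vectors $(s,1)^T$ and $(-s,1)^T$ are eigenvectors of $\overline{\Ind\xi(\sigma)}$ with eigenvalues $\pm\overline{s}$. They are also eigenvectors of $\overline{\Ind\xi(h)}=\overline{\xi}(h)I$ for $h\in W_E$.

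These two vectors are independent because $\ell$ is odd and $\overline{s}\neq0$, so the determinant $2\overline{s}$ is nonzero. This gives $\overline{\rho}\cong\overline{\theta}_1\oplus\overline{\theta}_2$. Alternatively, one can apply \Cref{thm: decomposability criterion for reducible representations} after an upper-triangularization.

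\textbf{Expected obstacle.} The main subtlety is Step 1: showing that $b$ is integral so that the reduction can be computed from the standard model. A secondary point is that $\theta_1,\theta_2$ are only characters after reduction, since $\xi\ne\xi^\sigma$ integrally. We must therefore phrase the multiplicativity check mod $\lambda$, and also note that the choice of square root merely swaps $\theta_1$ and $\theta_2$.
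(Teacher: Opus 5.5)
Your argument is correct and follows essentially the approach the paper relies on: integrality of $\rho(\sigma)$ forces $a^{-1}U\in\GL_2(\calO_K)$, so $\overline{\rho}\cong\Ind_{W_E}^{W_F}\overline{\xi}$, which then splits along the $\pm\sqrt{\xi(\sigma^2)}$-eigenlines of $\sigma$ once $\overline{\xi}=\overline{\xi}^\sigma$. One slip in Step 1: the entries you list for $\rho(\sigma)$ are those of $U^{-1}\Ind_{W_E}^{W_F}\xi(\sigma)\,U$, whereas in your model with $a=1$ one actually gets $\rho(\sigma)=\begin{pmatrix} b & (\xi(\sigma^2)-b^2)/d \\ d & -b\end{pmatrix}$, so integrality of $b$ is read off directly from the $(1,1)$ entry and your conclusion is unchanged.
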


\begin{proof}
    See Suözer \cite[Proposition 4.1.11]{Deniz}.
\end{proof}

The case $d/a \in \lambda$ requires a more detailed analysis. We start with the following observation:

\begin{lemma}
\label{lem: d/a in lambda implies unit b/a for supercuspidal reductions}
    With the same setup as in \Cref{supercuspidal reduction: unit d/a implies induced representation}, suppose this time that $d/a \in \lambda$. Then $b/a \in \calO_K^\times$.
\end{lemma}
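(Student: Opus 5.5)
We will compute $\rho(g)=U\,\Ind_{W_E}^{W_F}\xi(g)\,U^{-1}$ entry by entry, using the standard matrix form of the induced representation in the basis attached to the cosets $W_E$ and $\sigma W_E$:
\begin{equation*}
    \Ind_{W_E}^{W_F}\xi(h)=\begin{pmatrix}\xi(h)&0\\0&\xi^\sigma(h)\end{pmatrix}\ (h\in W_E),\qquad
    \Ind_{W_E}^{W_F}\xi(\sigma)=\begin{pmatrix}0&\xi(\sigma^2)\\1&0\end{pmatrix}.
\end{equation*}
With $U^{-1}=\frac{1}{ad}\begin{pmatrix}d&-b\\0&a\end{pmatrix}$, a matrix $M=\begin{pmatrix}m_{11}&m_{12}\\m_{21}&m_{22}\end{pmatrix}$ conjugates to
\begin{equation*}
UMU^{-1}=\begin{pmatrix} m_{11}+\tfrac{b}{a}m_{21} & \tfrac{1}{d}\bigl(-bm_{11}+a m_{12}+\tfrac{-b^2}{a}m_{21}+b m_{22}\bigr)\\ \tfrac{d}{a}m_{21} & m_{22}-\tfrac{b}{a}m_{21}\end{pmatrix}.
\end{equation*}
The integrality of $\rho(g)$ for all $g$ then gives conditions on $b/a$ and $d/a$.

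The key step is to use the integrality of the upper right entry for $g=\sigma$ (where $m_{11}=m_{22}=0$, $m_{21}=1$, $m_{12}=\xi(\sigma^2)$). This entry is
\begin{equation*}
\frac{a\,\xi(\sigma^2)-b^2/a}{d}=\frac{\xi(\sigma^2)-(b/a)^2}{d/a}.
\end{equation*}
Since $\xi(\sigma^2)$ is a unit (the image of $W_E$ under $\xi$ lies in a compact group of units after our integrality assumption, or directly because $\det\rho(\sigma)=-\xi(\sigma^2)\in\calO_K^\times$), and $d/a\in\lambda$, integrality forces $(b/a)^2\equiv \xi(\sigma^2)\pmod{\lambda}$. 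In particular $b/a\in\calO_K$ and its reduction is nonzero, so $b/a\in\calO_K^\times$.

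One must check that $b/a\in\calO_K$ to begin with, so that the congruence makes sense. If $v(b/a)<0$, then $v((b/a)^2)<0$ dominates the numerator, and dividing by $d/a\in\lambda$ only lowers the valuation further, contradicting integrality. Alternatively, the $(1,1)$ entry at $\sigma$, which equals $b/a$, must itself be integral, and this settles it immediately. The main point needing care is that $\xi(\sigma^2)$ is a unit. This follows from $\det\rho(\sigma)\in\calO_K^\times$, since $\rho(\sigma)\in\GL_2(\calO_K)$, together with $\det\Ind\xi(\sigma)=-\xi(\sigma^2)$. Beyond that the argument is just a valuation comparison.
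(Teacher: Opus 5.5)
Your proof is correct and is essentially the argument behind the paper's: the paper only cites Suözer's thesis for this lemma, but it later uses the same mechanism when it appeals to ``the integrality of $\psi_1(\Phi)$''. There, integrality of the upper-right entry of $\rho(\sigma)$ together with $d/a\in\lambda$ forces $(a/b)^2\equiv\xi(\sigma^2)\pmod{\lambda}$, hence $b/a\in\calO_K^\times$. The one difference is a convention. Reading off the paper's formulas for $\theta_1,\theta_2,\psi$, its matrix for $\Ind_{W_E}^{W_F}\xi(\sigma)$ has $\xi(\sigma^2)$ in the lower-left entry rather than the upper-right, and switching between the two amounts to replacing $a$ by the unit multiple $a/\xi(\sigma^2)$, which affects neither the hypothesis $d/a\in\lambda$ nor the conclusion.
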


\begin{proof}
    See Suözer \cite[Lemma 4.1.12]{Deniz}.
\end{proof}

When $d/a \in \lambda$, $\overline{\rho}(g)$ is upper triangular for all $g \in W_F$. Letting
\begin{alignat*}{2}
        \theta_1(g) &= \begin{cases} \xi(h), \\ \frac{b}{a}\xi(\sigma h \sigma), \end{cases} 
        &\quad& \begin{aligned} & g=h \in W_E \\ & g = h \sigma, h \in W_E \end{aligned} \\[1.5ex]
        \theta_2(g) &= \begin{cases} \xi^\sigma(h), \\ -\frac{b}{a}\xi(\sigma h \sigma), \end{cases} 
        &\quad& \begin{aligned} & g=h \in W_E \\ & g = h \sigma, h \in W_E \end{aligned} \\[1.5ex]
        \psi(g) &= \begin{cases} \frac{b}{d}\left(\xi^\sigma(h)-\xi(h)\right), \\ \frac{a}{d}\xi(h)-\frac{b^2}{ad}\xi(\sigma h \sigma), \end{cases}
        &\quad& \begin{aligned} & g=h \in W_E \\ & g=h\sigma, h \in W_E
        \end{aligned}
\end{alignat*}
we can write $\overline{\rho}$ as

\begin{equation*}
    \overline{\rho}(g) = \begin{pmatrix}
        \overline{\theta}_1(g) & \overline{\psi}(g) \\
        0 & \overline{\theta}_2(g)
    \end{pmatrix}
\end{equation*}
for all $g \in W_F$. Then by \Cref{thm: decomposability criterion for reducible representations}, $\overline{\rho}$ is decomposable if and only if $\overline{\psi}$ is a scalar multiple of $\overline{\theta}_2 - \overline{\theta}_1$.

\begin{proposition}
\label{thm: decomposability criterion for nonexceptional supercuspidal representations for odd ell}
    Let $\rho:W_F \rightarrow \GL_2(\overline{\Q}_\ell)$ be a nonexceptional supercuspidal representation such that $\rho(W_F) \subset \GL_2(\calO_K)$. Then $\rho \cong \Ind_{W_E}^{W_F}\xi$, where $\xi$ is a $1$-dimensional representation of $W_E$ such that $\xi \neq \xi^\sigma$. Using Iwasawa decomposition, we can assume without loss of generality that there exists an upper triangular matrix
    \begin{equation*}
        U = \begin{pmatrix}
            a & b \\
            0 & d
        \end{pmatrix} \in \GL_2(K)
    \end{equation*}
    such that
    \begin{equation*}
        \rho(g) = U\Ind_{W_E}^{W_F}\xi(g) U^{-1}
    \end{equation*}
    for all $g \in W_F$.

    Suppose $d/a \in \lambda$ so that $\overline{\rho}$ is reducible by \Cref{prop: irreducibility criterion for reductions of supercuspidal representations}. Then $\overline{\rho}$ is decomposable if and only if $v(b/d) \geq 1-i$, where $i$ is the maximal nonnegative integer such that
    \begin{equation*}
        \xi(h) \equiv \xi^\sigma(h) \pmod{\lambda^i}
    \end{equation*}
    for all $h \in W_E$. If $\overline{\rho}$ is decomposable, it decomposes as $\overline{\theta}_1 \oplus \overline{\theta}_2$, where
    \begin{alignat*}{2}
        \theta_1(g) &= \begin{cases} \xi(h), \\ \frac{b}{a}\xi(\sigma h \sigma), \end{cases} 
        &\quad& \begin{aligned} & g=h \in W_E \\ & g = h \sigma, h \in W_E \end{aligned} \\[1.5ex]
        \theta_2(g) &= \begin{cases} \xi^\sigma(h), \\ -\frac{b}{a}\xi(\sigma h \sigma), \end{cases} 
        &\quad& \begin{aligned} & g=h \in W_E \\ & g = h \sigma, h \in W_E \end{aligned}
    \end{alignat*}
\end{proposition}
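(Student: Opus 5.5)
We apply \Cref{thm: decomposability criterion for reducible representations} to the upper triangular form of $\overline{\rho}$ given just before the statement. Then $\overline{\rho}$ is decomposable if and only if there is $c \in k_K$ with $\overline{\psi} = c(\overline{\theta}_2 - \overline{\theta}_1)$ on all of $W_F$. We test this identity separately on the two cosets $W_E$ and $W_E\sigma$. We use \Cref{lem: d/a in lambda implies unit b/a for supercuspidal reductions}, which says $b/a \in \calO_K^\times$, so $v(b/d) = v(b/a) - v(d/a) = -v(d/a) \leq -1$. Integrality of $\rho$ forces $\psi$ to be $\calO_K$-valued, and we will use this repeatedly.

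\textbf{On $W_E$.} Here $\psi(h) = \frac{b}{d}(\xi^\sigma(h) - \xi(h))$ and $\theta_2(h) - \theta_1(h) = \xi^\sigma(h) - \xi(h)$. Write $\xi^\sigma - \xi = \pi^i \eta$, where $\pi$ is a uniformizer and $\eta$ is $\calO_K$-valued with $\overline{\eta} \neq 0$ by maximality of $i$. Since $\psi$ is integral, we get $v(b/d) + i \geq 0$. Note that $\xi \neq \xi^\sigma$, so $i$ is finite.

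If $v(b/d) \geq 1-i$, then $\overline{\psi}|_{W_E} = 0$. If $i \geq 1$, then also $\overline{\theta}_2 - \overline{\theta}_1 = 0$ on $W_E$. If $i = 0$, then $v(b/d) \geq 1$, which contradicts $v(b/d) \leq -1$; so in this branch $i \geq 1$ automatically.

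If instead $v(b/d) = -i$, then $\overline{\psi}|_{W_E} = \overline{u}\,\overline{\eta}$ with $u = \pi^i b/d$ a unit. Two cases arise:
\begin{itemize}
\item if $i = 0$, the only possible constant is $c = \overline{u}$, which is nonzero;
\item if $i \geq 1$, then $\overline{\theta}_2 - \overline{\theta}_1$ vanishes on $W_E$ while $\overline{\psi}$ does not, so decomposability fails immediately.
\end{itemize}

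\textbf{On $W_E\sigma$.} Here $\overline{\theta}_2 - \overline{\theta}_1 = -2\overline{(b/a)}\,\overline{\xi(\sigma h\sigma)}$, which is nonzero because $\ell$ is odd. So the coset $W_E\sigma$ always determines $c$ uniquely: it is $c = -\overline{\psi(h\sigma)}/(2\overline{(b/a)\xi(\sigma h \sigma)})$, and we must check that this is independent of $h$. We compute
\[
\psi(h\sigma) = \frac{a}{d}\Bigl(\xi(h) - (b/a)^2\xi(\sigma h\sigma)\Bigr) = \frac{a}{d}\xi(h)\Bigl(1 - (b/a)^2\xi(\sigma^2)\,\eta'(h)\Bigr),
\]
where $\eta'(h) = \xi^\sigma(h)/\xi(h)$. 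Note $\xi(\sigma h\sigma) = \xi^\sigma(h)\xi(\sigma^2)$, taking $\xi^\sigma(h) = \xi(\sigma h\sigma^{-1})$.

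Integrality at $h = 1$ forces $1 - (b/a)^2\xi(\sigma^2) \equiv 0 \pmod{\lambda^{v(d/a)}}$. Also $\eta'(h) \equiv 1 \pmod{\lambda^i}$. Hence $\psi(h\sigma) \equiv 0 \pmod{\lambda}$ exactly when $v(d/a) \leq i - 1$, i.e. $v(b/d) \geq 1-i$. In that case $c = 0$ works on both cosets, so $\overline{\rho}$ is decomposable with diagonal $\overline{\theta}_1 \oplus \overline{\theta}_2$.

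In the remaining case $v(b/d) = -i$, we need $\psi(h\sigma)$ modulo $\lambda$. It is a unit multiple of
\[
\pi^{-i}\bigl(1 - (b/a)^2\xi(\sigma^2)\bigr) + \pi^{-i}(b/a)^2\xi(\sigma^2)\bigl(1 - \eta'(h)\bigr).
\]
The second term is $-(b/a)^2\xi(\sigma^2)\,\eta(h)/\xi(h)$ reduced, which is a nonzero function of $h$ for some $h$. For decomposability, this would have to be matched against the constant forced by the $W_E$-coset computation (when $i = 0$) or against $c\cdot(\text{something proportional to } \xi(\sigma h \sigma))$ (when $i \geq 1$, where the $W_E$ part has already failed).

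\textbf{Main obstacle.} The hardest step is the $i = 0$ case. There one must show that the value of $c$ forced on $W_E$, namely $\overline{u}$, is incompatible with the value forced on $W_E\sigma$. I would first try to derive a contradiction from the homomorphism property: the reduction $\overline{\psi}$ is a cocycle, $\overline{\psi}(gh) = \overline{\theta}_1(g)\overline{\psi}(h) + \overline{\psi}(g)\overline{\theta}_2(h)$. Applying this with $g = \sigma$ and $h \in W_E$ relates the two cosets. If both cosets gave a constant $c$, the cocycle would be a coboundary, and comparing the $W_E$ values would force $\overline{\eta} \equiv 0$, a contradiction.

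So the core of the plan is the valuation bookkeeping above: showing that the single threshold $v(b/d) \geq 1-i$ controls the vanishing of $\overline{\psi}$ on both cosets, while at $v(b/d) = -i$ the $W_E$-part is nonzero and cannot be absorbed by any $c$.
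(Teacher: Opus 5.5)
Your route (apply \Cref{thm: decomposability criterion for reducible representations} to the triangular form of $\overline{\rho}$, and test $\overline{\psi}=c(\overline{\theta}_2-\overline{\theta}_1)$ separately on $W_E$ and on $W_E\sigma$) is the natural one. It is exactly the setup the paper gives right before the statement; the paper's own proof is only a citation to Suözer's thesis. Your argument for indecomposability when $v(b/d)=-i$ is correct.

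\textbf{The decomposable direction rests on a false step.} Integrality of $\psi(\sigma)=\frac{a}{d}\bigl(1-(b/a)^2\xi(\sigma^2)\bigr)$ gives only $v\bigl(1-(b/a)^2\xi(\sigma^2)\bigr)\ge v(d/a)$. That says $\psi(\sigma)\in\calO_K$, not $\psi(\sigma)\in\lambda$. So your claims that ``$\psi(h\sigma)\equiv 0 \pmod{\lambda}$ exactly when $v(d/a)\le i-1$'' and that ``$c=0$ works'' are wrong.

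For a concrete counterexample, take $\xi(\sigma^2)=1$, $i\ge 2$, $a=1$, $d=\pi$ and $b=\sqrt{1-\pi}\in\calO_K^\times$. Every entry of $\rho$ is integral and $v(b/d)=-1\ge 1-i$, yet $\psi(\sigma)=1$.

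The correct computation goes as follows. When $v(b/d)\ge 1-i$, the term $\frac{a}{d}(b/a)^2\xi(\sigma^2)\xi(h)\bigl(1-\eta'(h)\bigr)$ has valuation at least $i-v(d/a)\ge 1$. Hence $\overline{\psi}(h\sigma)=\overline{\psi(\sigma)}\,\overline{\xi}(h)$. This also follows directly from the cocycle identity $\overline{\psi}(h\sigma)=\overline{\theta}_1(h)\overline{\psi}(\sigma)+\overline{\psi}(h)\overline{\theta}_2(\sigma)$ once $\overline{\psi}|_{W_E}=0$. On the other side, using $\overline{\xi}^\sigma=\overline{\xi}$, you get $(\overline{\theta}_2-\overline{\theta}_1)(h\sigma)=-2\,\overline{(b/a)\xi(\sigma^2)}\,\overline{\xi}(h)$. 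The ratio is independent of $h$, so $c=-\overline{\psi(\sigma)}/\bigl(2\,\overline{(b/a)\xi(\sigma^2)}\bigr)$ works on $W_E\sigma$, and both sides vanish on $W_E$. The conclusion (decomposable, as $\overline{\theta}_1\oplus\overline{\theta}_2$) survives, but with a generally nonzero $c$.

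\textbf{Your ``main obstacle'' is vacuous.} You already have $v(b/d)+i\ge 0$ from integrality on $W_E$. You also have $v(b/d)=-v(d/a)\le -1$ from \Cref{lem: d/a in lambda implies unit b/a for supercuspidal reductions}. Together these force $i\ge 1$. Equivalently, $\overline{\theta}_1$ and $\overline{\theta}_2$ are characters of $W_F$ that restrict to $\overline{\xi}$ and $\overline{\xi}^\sigma$ on $W_E$, so $\overline{\xi}=\overline{\xi}^\sigma$.

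So the case $v(b/d)=-i=0$ never occurs, and the coboundary argument you sketch for it is unnecessary. The branch $v(b/d)=-i$ is settled entirely by your $W_E$ computation: $\overline{\psi}|_{W_E}\neq 0$, while $\overline{\theta}_2-\overline{\theta}_1$ vanishes on $W_E$. The same integrality bound makes the dichotomy $v(b/d)\ge 1-i$ versus $v(b/d)=-i$ exhaustive. With these two corrections the argument is complete.
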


\begin{proof}
    See Suözer \cite[Proposition 4.1.13]{Deniz}.
\end{proof}

\begin{remark}
\label{cohomology supercusp reduction odd ell decomposability equiv to restriction decomposability}
    When $\overline{\xi} = \overline{\xi}^\sigma$, the condition $v(b/d) \geq 1-i$ is equivalent to the decomposability of $\overline{\rho}|_{W_E}$ by \Cref{thm:decomposability criterion for reductions of principal series}. Then \Cref{thm: decomposability criterion for nonexceptional supercuspidal representations for odd ell} states that, in the case $d/a \in \lambda$, $\overline{\rho}$ is decomposable if and only if $\overline{\rho}|_{W_E}$ is decomposable.
\end{remark}

We summarize our results for reductions of nonexceptional supercuspidal representations in the following theorem:

\begin{theorem}
\label{thm: decomposability criterion for nonexceptional supercuspidal representations}
    Let $\rho:W_F \rightarrow \GL_2(\overline{\Q}_\ell)$ be a nonexceptional supercuspidal representation such that $\rho(W_F) \subset \GL_2(\calO_K)$. Then $\rho \cong \Ind_{W_E}^{W_F}\xi$, where $\xi$ is a $1$-dimensional representation of $W_E$ such that $\xi \neq \xi^\sigma$. Using Iwasawa decomposition, we can assume without loss of generality that there exists an upper triangular matrix
    \begin{equation*}
        U = \begin{pmatrix}
            a & b \\
            0 & d
        \end{pmatrix} \in \GL_2(K)
    \end{equation*}
    such that
    \begin{equation*}
        \rho(g) = U\Ind_{W_E}^{W_F}\xi(g) U^{-1}
    \end{equation*}
    for all $g \in W_F$. Let $i$ be the maximal nonnegative integer such that
    \begin{equation*}
        \xi(h) \equiv \xi^\sigma(h) \pmod{\lambda^i}
    \end{equation*}
    for all $h \in W_E$.

    $\overline{\rho}$ is irreducible if and only if $\overline{\xi} \neq \overline{\xi}^\sigma$. If $\overline{\rho}$ is reducible, then:

    \begin{enumerate}[(i)]
        \item If $d/a \in \calO_K^\times$, then $\overline{\rho}$ is isomorphic to $\Ind_{W_E}^{W_F}\overline{\xi}$ and decomposes as $\overline{\theta}_1 \oplus \overline{\theta}_2$, where
        \begin{alignat*}{2}
            \theta_1(g) &= \begin{cases} \xi(h), \\ \sqrt{\xi(\sigma^2)}\xi(h), \end{cases} 
            &\quad& \begin{aligned} & g=h \in W_E \\ & g = h \sigma, h \in W_E \end{aligned} \\[1.5ex]
            \theta_2(g) &= \begin{cases} \xi(h), \\ -\sqrt{\xi(\sigma^2)}\xi(h), \end{cases} 
            &\quad& \begin{aligned} & g=h \in W_E \\ & g = h \sigma, h \in W_E \end{aligned}
        \end{alignat*}

        \item If $d/a \in \lambda$, then $\overline{\rho}$ is decomposable if and only if $v(b/d) \geq 1-i$, or equivalently, if $\overline{\rho}|_{W_E}$ is decomposable. When $\overline{\rho}$ is decomposable, it decomposes as $\overline{\theta}_1 \oplus \overline{\theta}_2$, where
        \begin{alignat*}{2}
            \theta_1(g) &= \begin{cases} \xi(h), \\ \frac{b}{a}\xi(\sigma h \sigma), \end{cases} 
            &\quad& \begin{aligned} & g=h \in W_E \\ & g = h \sigma, h \in W_E \end{aligned} \\[1.5ex]
            \theta_2(g) &= \begin{cases} \xi^\sigma(h), \\ -\frac{b}{a}\xi(\sigma h \sigma), \end{cases} 
            &\quad& \begin{aligned} & g=h \in W_E \\ & g = h \sigma, h \in W_E \end{aligned}
        \end{alignat*}
    \end{enumerate}
\end{theorem}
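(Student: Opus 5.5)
This theorem is a summary of the preceding results, so the plan is to assemble them rather than compute anything new. We begin as in \Cref{prop: irreducibility criterion for reductions of supercuspidal representations}: write $\rho \cong \Ind_{W_E}^{W_F}\xi$ with $\xi \neq \xi^\sigma$. By the Iwasawa decomposition $\GL_2(K) = B(K)\GL_2(\calO_K)$, any conjugating matrix can be written as an upper triangular $U$ times an element of $\GL_2(\calO_K)$. Conjugating by an element of $\GL_2(\calO_K)$ does not change the isomorphism class of $\overline{\rho}$, so we may assume $U$ is upper triangular as in the statement.

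The irreducibility assertion is exactly \Cref{prop: irreducibility criterion for reductions of supercuspidal representations}. The proof of that proposition also shows that $d/a \in \calO_K$ whenever the image lies in $\GL_2(\calO_K)$. So when $\overline{\rho}$ is reducible, the two cases $d/a \in \calO_K^\times$ and $d/a \in \lambda$ exhaust all possibilities.

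In case (i), we quote \Cref{supercuspidal reduction: unit d/a implies induced representation}. It gives $\overline{\rho} \cong \Ind_{W_E}^{W_F}\overline{\xi}$ together with the stated splitting $\overline{\theta}_1 \oplus \overline{\theta}_2$. The reason this works is that reducibility forces $\overline{\xi} = \overline{\xi}^\sigma$. Then $\overline{\xi}$ extends to $W_F$ by sending $\sigma$ to a square root of $\overline{\xi}(\sigma^2)$. The two extensions are distinct because $\ell$ is odd, so the induced representation is their direct sum. The square root $\sqrt{\xi(\sigma^2)}$ exists after possibly enlarging $K$, which is harmless since we work inside $\overline{\Q}_\ell$.

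In case (ii), we quote \Cref{thm: decomposability criterion for nonexceptional supercuspidal representations for odd ell}. It gives the criterion $v(b/d) \geq 1-i$ and the description of the summands when $\overline{\rho}$ is decomposable. For the equivalent formulation we use \Cref{cohomology supercusp reduction odd ell decomposability equiv to restriction decomposability}. Restricted to $W_E$, $\rho$ is the principal series $U(\xi \oplus \xi^\sigma)U^{-1}$ with the same upper triangular $U$. Here $\xi \neq \xi^\sigma$ and $\overline{\xi} = \overline{\xi}^\sigma$, so we are in case (iii) of \Cref{thm:decomposability criterion for reductions of principal series}. That case says $\overline{\rho}|_{W_E}$ is decomposable if and only if $v(b/d) \geq 1-i$, with the same $i$ as in the present statement.

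The only real point of care, and where I expect any obstacle, is making sure the normalisations line up across the cited results. The first check is that the roles of $\chi_1, \chi_2$ in \Cref{thm:decomposability criterion for reductions of principal series} match $\xi, \xi^\sigma$ as the diagonal of $\Ind_{W_E}^{W_F}\xi$ restricted to $W_E$. This depends on the chosen basis of the induced representation, and the quantity $v(b/d)$ must refer to the same entries in both results. The second check is that $i$ is defined identically in both places. Both are direct inspections of the definitions.
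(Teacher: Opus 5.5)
Your proposal is correct and follows the paper's proof. The paper simply assembles \Cref{prop: irreducibility criterion for reductions of supercuspidal representations}, \Cref{supercuspidal reduction: unit d/a implies induced representation}, \Cref{thm: decomposability criterion for nonexceptional supercuspidal representations for odd ell} and \Cref{cohomology supercusp reduction odd ell decomposability equiv to restriction decomposability}; your check that $\rho|_{W_E} = U(\xi\oplus\xi^\sigma)U^{-1}$ falls under case (iii) of \Cref{thm:decomposability criterion for reductions of principal series}, with the same $b/d$ and the same $i$, is exactly the content of that remark.

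The only slip is minor. To conjugate away the integral factor you need $M = AU$ with $A \in \GL_2(\calO_K)$ on the left, not $U$ times an integral matrix as you wrote; this form follows from the factorization $\GL_2(K) = B(K)\GL_2(\calO_K)$ that you quoted by taking inverses.
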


\begin{proof}
    By \Cref{prop: irreducibility criterion for reductions of supercuspidal representations}, $\overline{\rho}$ is irreducible if and only if $\overline{\xi} \neq \overline{\xi}^\sigma$. Suppose that $\overline{\rho}$ is reducible. If $d/a \in \calO_K^\times$, then $\overline{\rho}$ is isomorphic to $\Ind_{W_E}^{W_F}\overline{\xi}$ and decomposes as $\overline{\theta}_1 \oplus \overline{\theta}_2$ by \Cref{supercuspidal reduction: unit d/a implies induced representation}. If $d/a \in \lambda$, then by \Cref{thm: decomposability criterion for nonexceptional supercuspidal representations for odd ell}, $\overline{\rho}$ is decomposable if and only if $v(b/d) \geq 1-i$, in which case it decomposes as $\overline{\theta}_1 \oplus \overline{\theta}_2$. As we have seen in \Cref{cohomology supercusp reduction odd ell decomposability equiv to restriction decomposability}, the condition $v(b/d) \geq 1-i$ is equivalent to the decomposability of $\overline{\rho}|_{W_E}$.
\end{proof}

Lastly, we look at reductions of representations associated with special representations. We describe what these representations look like in the following lemma:

\begin{lemma}
\label{special representation matrix form}
    Let $\rho:W_F \rightarrow \GL_2(\overline{\Q}_\ell)$ be a continuous, semisimple representation such that $\rho(W_F) \subset \GL_2(\calO_K)$. Suppose that the associated Weil--Deligne representation $(\rho_\Phi, N_\rho)$ is a special representation so that $\rho_\Phi \cong \chi\omega \oplus \chi$, where $\chi$ is a $1$-dimensional representation of $W_F$. Then there exists a continuous, semisimple representation $\eta$ of $W_F$ defined by
    \begin{equation}
    \label{special rep matrix form: the explicit form}
        \eta(\Phi^k\tau) = \begin{pmatrix}
            \chi\omega(\Phi^k\tau) & \frac{a}{d}t_\ell(\tau)\chi\omega(\Phi^k\tau) + \frac{b}{d}(\chi(\Phi^k\tau) - \chi\omega(\Phi^k\tau)) \\
            0 & \chi(\Phi^k\tau)
        \end{pmatrix},
    \end{equation}
    where $a, b, d \in K$ and $ad \neq 0$ such that $\overline{\rho} \cong \overline{\eta}$.
\end{lemma}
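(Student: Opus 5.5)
We first put the Weil--Deligne data in a standard form and then transport it to an integral basis. Since $(\rho_\Phi,N_\rho)$ is special, there is a basis $e_1,e_2$ of $\overline{\Q}_\ell^2$ with
\begin{equation*}
\rho_\Phi(g)=\begin{pmatrix}\chi\omega(g)&0\\0&\chi(g)\end{pmatrix},\qquad N_\rho=\begin{pmatrix}0&1\\0&0\end{pmatrix}.
\end{equation*}
The relation $\rho_\Phi(g)N\rho_\Phi(g)^{-1}=\omega(g)N$ forces $N e_2\in\overline{\Q}_\ell e_1$, and we rescale $e_1$ so that $Ne_2=e_1$. The formula $\rho(\Phi^k\tau)=\rho_\Phi(\Phi^k\tau)\exp(t_\ell(\tau)N_\rho)$ then gives, in this basis,
\begin{equation*}
\rho_0(\Phi^k\tau)=\begin{pmatrix}\chi\omega(\Phi^k\tau)&t_\ell(\tau)\chi\omega(\Phi^k\tau)\\0&\chi(\Phi^k\tau)\end{pmatrix}.
\end{equation*}
We take $K$ large enough that $\chi$ and the change of basis are defined over $K$; this is harmless, since enlarging $K$ does not affect whether reductions are isomorphic up to extension of scalars. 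So $\rho\cong\rho_0$ over $K$.

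Next we choose a $\rho$-stable lattice. The image of $\rho$ lies in $\GL_2(\calO_K)$, so $\calO_K^2$ is stable under $\rho\cong\rho_0$, and it corresponds to some $\rho_0$-stable lattice $\Lambda$. By the Iwasawa decomposition $\GL_2(K)=B(K)\GL_2(\calO_K)$, we can take a basis of $\Lambda$ given by the columns of an upper triangular matrix $U=\begin{pmatrix}a&b\\0&d\end{pmatrix}$ with $ad\neq0$, exactly as in \Cref{thm:decomposability criterion for reductions of principal series}. Then $\rho$ is $\GL_2(\calO_K)$-conjugate to $\eta:=U^{-1}\rho_0U$. Conjugation by $\GL_2(\calO_K)$ preserves the reduction up to isomorphism, as explained in the introduction, so $\overline{\rho}\cong\overline{\eta}$. (The convention in the statement may be $U\rho_0U^{-1}$ instead; in that case we just rename the entries.)

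The last step is the direct computation of $U^{-1}\rho_0U$. Because $\rho_0$ is upper triangular with diagonal $(\chi\omega,\chi)$ and $U$ is upper triangular, $\eta$ is upper triangular with the same diagonal. Its upper right entry is
\begin{equation*}
\tfrac{1}{ad}\bigl(b\,d\,\chi\omega + d^2\,t_\ell\chi\omega - b\,d\,\chi\bigr).
\end{equation*}
Up to a sign convention on $b$, which we absorb by replacing $b$ with $-b$, this equals $\tfrac{d}{a}t_\ell\chi\omega+\tfrac{b}{a}(\chi-\chi\omega)$. This matches \eqref{special rep matrix form: the explicit form} after swapping the names of $a$ and $d$, or after using the convention $U\rho_0U^{-1}$. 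We will fix the convention so that the formula comes out exactly as stated. Continuity of $\eta$ is inherited from $\rho$. We will read ``semisimple'' as the property inherited from $\rho$, which $\eta$ has as a conjugate of $\rho$ (in the Frobenius-semisimple sense of \Cref{Weil--Deligne and continuous weil representation correspondence}).

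The main obstacle is the bookkeeping in the lattice step. We must justify carefully that an invariant lattice exists with an upper triangular basis matrix and entries in $K$, which may require enlarging $K$. We must also match the normalization of $N$ and the labels $a,b,d$ to the displayed formula. The matrix algebra itself is routine.
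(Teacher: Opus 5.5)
Your proof is correct and is essentially the paper's argument. The paper writes $\rho = M\rho_0M^{-1}$ with $M\in\GL_2(K)$, Iwasawa-decomposes $M = AU$ with $A\in\GL_2(\calO_K)$ and $U$ upper triangular, and takes $\eta = U\rho_0U^{-1} = A^{-1}\rho A$. That is exactly your lattice step applied to $M$ instead of $M^{-1}$: your $U$ is the inverse of theirs, which explains the sign change and the swap of $a$ and $d$.

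One small point: you do not need to enlarge $K$. The operator $N_\rho = (\rho(\tau)-1)/t_\ell(\tau)$, for $\tau$ in the open subgroup from Grothendieck's theorem with $t_\ell(\tau)\neq 0$, is $K$-rational, and hence so is $\rho_\Phi$. The eigenlines of $\rho_\Phi(\Phi)$ are also $K$-rational, since its eigenvalues $q^{-1}\chi(\Phi)\neq\chi(\Phi)$ lie in $K$. So $M$, and hence $a,b,d$, can be taken in $K$, as the statement requires.
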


\begin{proof}
    As $(\rho_\Phi, N_\rho)$ is a special representation, there exists $M \in \GL_2(K)$ such that
    \begin{align}
        M\begin{pmatrix}
            \chi\omega(g) & 0 \\
            0 & \chi(g)
        \end{pmatrix}M^{-1} &= \rho_\Phi(g) \label{special representation matrix form: associated w-d conjugated form} \\
        M\begin{pmatrix}
            0 & 1 \\
            0 & 0
        \end{pmatrix}M^{-1} &= N_\rho \label{special representation matrix form: associated nilpotent operator form}
    \end{align}
    for all $g \in W_F$. By \Cref{Weil--Deligne and continuous weil representation correspondence}, we have the relation
    \begin{equation}
    \label{special representation matrix form: grothendieck correspondence}
        \rho(\Phi^k\tau) = \rho_\Phi(\Phi^k\tau)\exp(t_\ell(\tau)N_\rho)
    \end{equation}
    between $\rho, \rho_\Phi$, and $N_\rho$. Plugging \eqref{special representation matrix form: associated w-d conjugated form} and \eqref{special representation matrix form: associated nilpotent operator form} into \eqref{special representation matrix form: grothendieck correspondence}, we obtain
    \begin{equation*}
        \rho(\Phi^k\tau) = M\begin{pmatrix}
            \chi\omega(\Phi^k\tau) & 0 \\
            0 & \chi(\Phi^k\tau)
        \end{pmatrix}M^{-1} \exp \left(t_\ell(\tau)M\begin{pmatrix}
            0 & 1 \\
            0 & 0
        \end{pmatrix}M^{-1}  \right).
    \end{equation*}
    Using the Taylor series for $\exp$, we further simplify this to
    \begin{equation*}
        \rho(\Phi^k\tau) = M \left( \begin{pmatrix}
            \chi\omega(\Phi^k\tau) & 0 \\
            0 & \chi(\Phi^k\tau)
        \end{pmatrix} + t_\ell(\tau)\begin{pmatrix}
            \chi\omega(\Phi^k\tau) & 0 \\
            0 & \chi(\Phi^k\tau)
        \end{pmatrix}\begin{pmatrix}
            0 & 1 \\
            0 & 0
        \end{pmatrix}\right)M^{-1}.
    \end{equation*}
    We can decompose $M$ as $M = AU$ using Iwasawa decomposition, where $U \in \GL_2(K)$ is upper triangular and $A \in \GL_2(\calO_K)$. Letting
    \begin{equation*}
        U = \begin{pmatrix}
            a & b \\
            0 & d
        \end{pmatrix},
    \end{equation*}
    we get
    \begin{align*}
        \rho(\Phi^k\tau) &= AU \left( \begin{pmatrix}
            \chi\omega(\Phi^k\tau) & 0 \\
            0 & \chi(\Phi^k\tau)
        \end{pmatrix} + t_\ell(\tau)\begin{pmatrix}
            \chi\omega(\Phi^k\tau) & 0 \\
            0 & \chi(\Phi^k\tau)
        \end{pmatrix}\begin{pmatrix}
            0 & 1 \\
            0 & 0
        \end{pmatrix}\right)U^{-1}A^{-1} \\
        &= A\begin{pmatrix}
            \chi\omega(\Phi^k\tau) & \frac{a}{d}t_\ell(\tau)\chi\omega(\Phi^k\tau) + \frac{b}{d}(\chi(\Phi^k\tau) - \chi\omega(\Phi^k\tau)) \\
            0 & \chi(\Phi^k\tau)
        \end{pmatrix} A^{-1}.
    \end{align*}
    Hence, $\overline{\rho} \cong \overline{\eta}$.
\end{proof}

As reductions of the representations are what we are interested in, we can assume without loss of generality that $\rho$ is defined by \eqref{special rep matrix form: the explicit form}.

The following lemma shows that when $a/d \in \lambda$, the reduction $\overline{\rho}$ is isomorphic to the reduction of a principal series:

\begin{lemma}
\label{special rep principal reduction case}
    With the same setup as in \Cref{special representation matrix form}, suppose that $a/d \in \lambda$. Then $\overline{\rho} = \overline{\eta}$, where $\eta:W_F \rightarrow \GL_2(\overline{\Q}_\ell)$ is a continuous, semisimple representation such that the associated Weil--Deligne representation $(\eta_\Phi, N_\rho)$ is the principal series $(\chi\omega\oplus\chi, 0)$. 
\end{lemma}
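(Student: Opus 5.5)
We work with the explicit form \eqref{special rep matrix form: the explicit form}, which we may take to be $\rho$ itself. The plan is to notice that when $a/d \in \lambda$, the term $\frac{a}{d}t_\ell(\tau)\chi\omega(\Phi^k\tau)$ in the upper right entry vanishes modulo $\lambda$. The remaining matrix should be exactly the conjugate of a diagonal principal series by an upper triangular matrix, so that it is the representation $\eta$ we want.

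First I check integrality. The entries of $\rho$ lie in $\calO_K$, and $t_\ell(\tau) \in \Z_\ell$. Also $\chi\omega(\Phi^k\tau)$ is a diagonal entry of $\rho$, so it lies in $\calO_K^\times$. Together with $a/d \in \lambda$, this gives $\frac{a}{d}t_\ell(\tau)\chi\omega(\Phi^k\tau) \in \lambda$. Since the full upper right entry is integral, $\frac{b}{d}(\chi(g)-\chi\omega(g))$ is also in $\calO_K$ for every $g$.

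Next, define
\begin{equation*}
    \eta(g) := \begin{pmatrix} \chi\omega(g) & \frac{b}{d}(\chi(g)-\chi\omega(g)) \\ 0 & \chi(g) \end{pmatrix}.
\end{equation*}
A direct computation shows that $\eta(g) = V\,\mathrm{diag}(\chi\omega(g),\chi(g))\,V^{-1}$ with $V = \begin{pmatrix} 1 & b/d \\ 0 & 1\end{pmatrix}$; one just checks that the upper right entry of the conjugate is $\frac{b}{d}(\chi-\chi\omega)$. Because it is conjugate to $\chi\omega\oplus\chi$, $\eta$ is a continuous semisimple representation of $W_F$. It is trivial on an open subgroup of inertia, since $\chi$ and $\omega$ are, so by \Cref{grothendieck's l-adic monodromy thm} its monodromy operator is $0$. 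Hence its Weil--Deligne representation is the principal series $(\chi\omega\oplus\chi,0)$. By the first step, $\eta$ takes values in $\GL_2(\calO_K)$, and entrywise $\rho(g) \equiv \eta(g) \pmod{\lambda}$. This gives the equality $\overline{\rho} = \overline{\eta}$ as matrices, not just up to isomorphism.

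The only delicate point is the unit claim for the diagonal entries. It holds because $\rho(W_F) \subset \GL_2(\calO_K)$ is upper triangular here, so the diagonal entries are units. One more subtlety is that the statement writes $(\eta_\Phi, N_\rho)$. I would read the monodromy there as the zero operator coming from $\eta$ and note this in the proof.
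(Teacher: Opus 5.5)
Your proof is correct and follows essentially the same route as the paper: you kill $\frac{a}{d}t_\ell(\tau)\chi\omega(\Phi^k\tau)$ modulo $\lambda$ and recognize what remains as a conjugate of $\operatorname{diag}(\chi\omega,\chi)$; the paper conjugates by $U=\begin{pmatrix} a & b\\ 0 & d\end{pmatrix}$, which differs from your $V$ only by a diagonal factor. Your additional checks fill in details the paper leaves implicit: that $\frac{b}{d}(\chi-\chi\omega)$ is integral, so $\eta$ lands in $\GL_2(\calO_K)$, and that $N_\eta=0$ because $\eta$ is trivial on an open subgroup of inertia. Your reading of the $N_\rho$ in the statement as $N_\eta=0$ is the sensible one.
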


\begin{proof}
    By \Cref{special representation matrix form}, we can assume without loss of generality that $\rho$ is defined by
    \begin{equation*}
        \rho(\Phi^k\tau) = \begin{pmatrix}
        \chi\omega(\Phi^k\tau) & \frac{a}{d}t_\ell(\tau)\chi\omega(\Phi^k\tau)+\frac{b}{d}(\chi(\Phi^k\tau)-\chi\omega(\Phi^k\tau)) \\
        0 & \chi(\Phi^k\tau)
    \end{pmatrix}.
    \end{equation*}
    Note that $\chi\omega(W_F) \subset \calO_K^\times$ and $t_\ell(\tau) \in \calO_K$ for all $\tau \in I_F$, so the term
    \begin{equation*}
        \frac{a}{d}t_\ell(\tau)\chi\omega(\Phi^k\tau)
    \end{equation*}
    vanishes mod $\lambda$ as $a/d \in \lambda$ by assumption and we are left with
    \begin{equation*}
        \overline{\rho}(\Phi^k\tau) = \begin{pmatrix}
            \chi\omega(\Phi^k\tau) & \frac{b}{d}(\chi(\Phi^k\tau) - \chi\omega(\Phi^k\tau)) \\
            0 & \chi(\Phi^k\tau)
        \end{pmatrix}.
    \end{equation*}
    Observe that
    \begin{equation*}
        \begin{pmatrix}
            \chi\omega(\Phi^k\tau) & \frac{b}{d}(\chi(\Phi^k\tau) - \chi\omega(\Phi^k\tau)) \\
            0 & \chi(\Phi^k\tau)
        \end{pmatrix} = U\begin{pmatrix}
            \chi\omega(\Phi^k\tau) & 0 \\
            0 & \chi(\Phi^k\tau)
        \end{pmatrix}U^{-1},
    \end{equation*}
    where
    \begin{equation*}
        U = \begin{pmatrix}
            a & b \\
            0 & d
        \end{pmatrix} \in \GL_2(K).
    \end{equation*}
    Hence, $\overline{\rho} = \overline{\eta}$, where $\eta$ is the principal series defined by
    \begin{equation*}
        \eta(g) = U\begin{pmatrix}
            \chi\omega(g) & 0 \\
            0 & \chi(g)
        \end{pmatrix}U^{-1}.
    \end{equation*}
\end{proof}

\Cref{special rep principal reduction case} lets us to restrict our attention to the case $a/d \in \calO_K^\times$ when comparing reductions of special representations with other types. If $a/d \in \lambda$, then $\overline{\rho}$ is isomorphic to the reduction of a principal series, so the comparison of $\overline{\rho}$ with another representation reduces to the corresponding comparison for principal series, which is treated separately.

If $a/d \in \calO_K^\times$, then $\overline{\rho}$ is indecomposable:

\begin{lemma}
\label{lem: special representation decomposability condition}
    With the same setup as in \Cref{special representation matrix form}, suppose that $\overline{\rho}$ is decomposable. Then $a/d \in \lambda$.
\end{lemma}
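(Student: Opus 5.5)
We argue by contrapositive: assume $a/d \in \calO_K^\times$ and show that $\overline{\rho}$ is indecomposable. By \Cref{special representation matrix form} we may take $\rho = \eta$ as in \eqref{special rep matrix form: the explicit form}. This $\eta$ is upper triangular with entries in $\calO_K$: the diagonal characters take unit values, and the upper-right entry is integral because $\rho(W_F)\subset \GL_2(\calO_K)$. Reducing mod $\lambda$ puts $\overline{\rho}$ exactly in the shape required by \Cref{thm: decomposability criterion for reducible representations}, with $\chi_1 = \overline{\chi\omega}$, $\chi_2 = \overline{\chi}$, and $\psi$ the reduction of the upper-right entry.

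By \Cref{thm: decomposability criterion for reducible representations}, decomposability of $\overline{\rho}$ means there is $c \in k_K$ with
\begin{equation*}
\overline{\tfrac{a}{d}t_\ell(\tau)\chi\omega(\Phi^k\tau) + \tfrac{b}{d}\bigl(\chi(\Phi^k\tau)-\chi\omega(\Phi^k\tau)\bigr)} = c\bigl(\overline{\chi}(\Phi^k\tau)-\overline{\chi\omega}(\Phi^k\tau)\bigr)
\end{equation*}
for all $k,\tau$. I plan to derive a contradiction by evaluating on suitable elements. First take $k=0$ and $\tau \in I_F$. Since $\omega$ is trivial on $I_F$, the differences $\chi(\tau)-\chi\omega(\tau)$ vanish identically. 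Also, $b/d$ times zero is zero even if $b/d$ is nonintegral, because the expression is exactly $0$ before reduction. The condition therefore becomes $\overline{(a/d)\,t_\ell(\tau)\,\chi(\tau)} = 0$ for all $\tau\in I_F$.

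Now choose $\tau$ with $t_\ell(\tau) = 1$, which is possible since $t_\ell$ is surjective. Since $\chi(\tau)\in\calO_K^\times$, this forces $a/d \in \lambda$, contradicting $a/d\in \calO_K^\times$. Hence a decomposable $\overline{\rho}$ must have $a/d \in \lambda$.

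The main point to check carefully is the integrality and reduction step: the upper-right entry is integral as a whole, but its two summands need not be separately integral when $b/d \notin \calO_K$. Restricting to inertia avoids this issue, because the $b/d$-summand is literally zero there. So the reduction of the entry on $I_F$ is just the reduction of $(a/d)t_\ell(\tau)\chi(\tau)$, and the argument goes through.
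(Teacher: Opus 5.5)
Your proof is correct and is essentially the paper's argument: pass to the explicit form $\eta$, apply \Cref{thm: decomposability criterion for reducible representations}, set $k=0$ so that the $b/d$-term vanishes (because $\omega$ is trivial on $I_F$), and evaluate at a $\tau$ with $t_\ell(\tau)$ a unit to force $a/d\in\lambda$. Your remark that the $b/d$-summand is exactly zero on inertia, so the argument does not need $b/d$ to be integral, is a clarification the paper leaves implicit; the "contrapositive" framing is really a direct proof, which is harmless.
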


\begin{proof}
    By \Cref{special representation matrix form}, we can assume without loss of generality that $\rho$ is defined by
    \begin{equation*}
        \rho(\Phi^k\tau) = \begin{pmatrix}
        \chi\omega(\Phi^k\tau) & \frac{a}{d}t_\ell(\tau)\chi\omega(\Phi^k\tau)+\frac{b}{d}(\chi(\Phi^k\tau)-\chi\omega(\Phi^k\tau)) \\
        0 & \chi(\Phi^k\tau)
    \end{pmatrix},
    \end{equation*}
    where $a, b, d \in K$ and $ad \neq 0$. Since $\overline{\rho}$ is decomposable, by \Cref{thm: decomposability criterion for reducible representations}, there exists $c \in \calO_K$ such that
    \begin{equation}
    \label{special rep decomposability condition cong}
        c(\chi(\Phi^k\tau)-\chi\omega(\Phi^k\tau)) \equiv \frac{a}{d}t_\ell(\tau)\chi\omega(\Phi^k\tau)+\frac{b}{d}(\chi(\Phi^k\tau)-\chi\omega(\Phi^k\tau)) \pmod{\lambda}
    \end{equation}
    for all $\tau \in I_F, k \in \Z$. Letting $k = 0$, \eqref{special rep decomposability condition cong} becomes
    \begin{equation*}
        0 \equiv \frac{a}{d}t_\ell(\tau)\chi(\tau) \pmod{\lambda}
    \end{equation*}
    for all $\tau \in I_F$. As we can pick $\tau_0 \in I_F$ such that $t_\ell(\tau_0) \in \calO_K^\times$, we conclude that $a/d \in \lambda$.
\end{proof}

If $a/d \in \lambda$, $\overline{\rho}$ is isomorphic to the reduction of a principal series, so whether or not $\overline{\rho}$ is decomposable is completely described by \Cref{thm:decomposability criterion for reductions of principal series}. Otherwise, if $a/d \in \calO_K^\times$, then $\overline{\rho}$ is indecomposable by \Cref{lem: special representation decomposability condition}. This concludes our analysis of reductions of representations of $W_F$ associated with special representations. Before summarizing our description, we prove the following lemma, which we will use to simplify our subsequent decomposability description:

\begin{lemma}
\label{lem: omega trivial reduction criterion}
    The $1$-dimensional representation $\omega:W_F \rightarrow K^\times$ acts as the identity on the residue field $\calO_K/\lambda$ if and only if $q \equiv 1 \pmod{\ell}$.
\end{lemma}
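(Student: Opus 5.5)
The plan is to compute the reduction of $\omega$ directly from its explicit formula $\omega(\Phi^k\tau) = q^{-k}$ for $\tau \in I_F$ and $k \in \Z$. Here $q$ is a power of $p$ and $p \neq \ell$, so $q$ is a unit in $\Z_\ell \subset \calO_K$. Hence $\omega$ takes values in $\calO_K^\times$, and its reduction $\overline{\omega}: W_F \rightarrow (\calO_K/\lambda)^\times$ is well defined and given by $\overline{\omega}(\Phi^k\tau) = \overline{q}^{\,-k}$. The statement that $\omega$ ``acts as the identity on $\calO_K/\lambda$'' I will interpret as saying that $\overline{\omega}$ is the trivial character, so that $\overline{\omega}(g) = 1$ for all $g \in W_F$.

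For the forward direction, I will specialize to $g = \Phi$, that is, $k = 1$ and $\tau = 1$. Triviality of $\overline{\omega}$ then gives $q^{-1} \equiv 1 \pmod{\lambda}$, and multiplying by the unit $q$ gives $q \equiv 1 \pmod{\lambda}$. Since $q - 1 \in \Z$, I will use $\lambda \cap \Z = \ell\Z$: the maximal ideal $\lambda$ contracts to the maximal ideal $\ell\Z_\ell$ of $\Z_\ell$, and $\ell\Z_\ell \cap \Z = \ell\Z$. This yields $q \equiv 1 \pmod{\ell}$.

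For the converse, suppose $q \equiv 1 \pmod{\ell}$. Then $q - 1 \in \ell\Z \subset \lambda$, so $\overline{q} = 1$ in $\calO_K/\lambda$. It follows that $\overline{q}^{\,-k} = 1$ for every $k \in \Z$, and therefore $\overline{\omega}(\Phi^k\tau) = 1$ for all $k$ and all $\tau \in I_F$. Every element of $W_F$ has the form $\Phi^k\tau$, so $\overline{\omega}$ is trivial.

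There is no real obstacle in this argument. The one point needing care is the identification $\lambda \cap \Z = \ell\Z$, which holds because $K/\Q_\ell$ is an extension of local fields, so $\lambda$ lies over $\ell$. I will also note explicitly that inertia acts trivially, so that the whole condition reduces to the single value $\omega(\Phi) = q^{-1}$.
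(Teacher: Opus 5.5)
Your proof is correct and is essentially the paper's argument: you evaluate $\omega(\Phi^k\tau)=q^{-k}$, use that $\lambda$ meets $\Z$ in $\ell\Z$ for the forward direction, and for the converse propagate $q\equiv 1$ to every power $q^{-k}$ (the paper does this via $q-1\mid q^k-1$ and $v(q^{-k}-1)=v(1-q^k)$, while you work directly in the residue field). Your specialization to $g=\Phi$, i.e.\ $k=1$, is the right one; the paper's text says ``letting $k=0$'', which is a slip, since $k=0$ gives no information.
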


\begin{proof}
    For any $\Phi^k\tau \in W_F$, we have $\omega(\Phi^k\tau)=q^{-k}$. Suppose that $\overline{\omega} = \overline{1}$. Then $q^{-k} \equiv 1 \pmod{\lambda}$ for all $k \in \Z$. Since $q^{-k} \in \Q$, this congruence is equivalent to $q^{-k}\equiv 1 \pmod{\ell}$. Letting $k = 0$, we obtain $q \equiv 1 \pmod{\ell}$.

    Conversely, suppose that $q \equiv 1 \pmod{\ell}$. Since $q-1$ divides $q^k-1$ for all $k \in \mathbb{N}$, we have $q^k \equiv 1 \pmod{\ell}$ for all $k \in \mathbb{N}$. To conclude, note that for $k \in \mathbb{N}$, then $q^{-k}-1 = \frac{1-q^k}{q^k}$, so $v(q^{-k}-1) = v(1-q^k)$ as $q \in \calO_K^\times$. Hence, $\overline{\omega} = \overline{1}$.
\end{proof}

\begin{theorem}
\label{thm:decomposability criterion for reductions of special representations}
    Let $\rho:W_F \rightarrow \GL_2(\overline{\Q}_\ell)$ be a continuous, semisimple representation such that $\rho(W_F) \subset \GL_2(\calO_K)$. Suppose that the associated Weil--Deligne representation $(\rho_\Phi, N_\rho)$ is a special representation. By \Cref{special representation matrix form}, we can assume without loss of generality that $\rho$ is defined by
    \begin{equation*}
        \rho(\Phi^k\tau) = \begin{pmatrix}
            \chi\omega(\Phi^k\tau) & \frac{a}{d}t_\ell(\tau)\chi\omega(\Phi^k\tau) + \frac{b}{d}(\chi\omega(\Phi^k\tau) - \chi(\Phi^k\tau) \\
            0 & \chi(\Phi^k\tau)
        \end{pmatrix},
    \end{equation*}
    where $a, b, d \in K$ and $ad \neq 0$.

    The reduction  $\overline{\rho}$ is reducible. Further:

    \begin{enumerate}[(i)]
        \item If $a/d \in \calO_K^\times$, $\overline{\rho}$ is indecomposable.
        \item If $q \not\equiv 1 \pmod{\ell}$ and $a/d \in \lambda$, then $\overline{\rho}$ decomposes as $\overline{\chi\omega} \oplus \overline{\chi}$.

        \item If $q \equiv 1 \pmod{\ell}$ and $a/d \in \lambda$, then $\overline{\rho}$ is decomposable (and decomposes as $\overline{\chi\omega} \oplus \overline{\chi}$) if and only if $v(b/d) \geq 1-ei$, where $e$ is the ramification index of the extension $K/\Q_{\ell}$ and $i$ is the maximal nonnegative integer such that
        \begin{equation*}
            q \equiv 1 \pmod{\ell^i}
        \end{equation*}
        for all $g \in W_F$.
    \end{enumerate}
\end{theorem}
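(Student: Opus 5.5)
The approach is to combine the lemmas already proved for special representations with the principal series criterion. Reducibility is immediate: the matrix form from \Cref{special representation matrix form} is upper triangular with entries in $\calO_K$. Since $\rho(W_F)\subset\GL_2(\calO_K)$ and the diagonal entries $\chi\omega,\chi$ are units, the upper right entry is integral, because it is an entry of $\rho(g)$. Hence $\overline{\rho}$ is upper triangular and the line spanned by the first basis vector is stable. Part (i) is the contrapositive of \Cref{lem: special representation decomposability condition}: if $\overline{\rho}$ were decomposable then $a/d\in\lambda$, which contradicts $a/d\in\calO_K^\times$.

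For (ii) and (iii), assume $a/d\in\lambda$. By \Cref{special rep principal reduction case}, $\overline{\rho}=\overline{\eta}$ with $\eta=U(\chi\omega\oplus\chi)U^{-1}$ a principal series, where $U=\begin{pmatrix} a&b\\0&d\end{pmatrix}$. We therefore apply \Cref{thm:decomposability criterion for reductions of principal series} with $\chi_1=\chi\omega$ and $\chi_2=\chi$. Note that $\chi_1\neq\chi_2$, since $\omega(\Phi)=q^{-1}\neq1$, so case (i) of that theorem never applies. Moreover $\overline{\chi}_1=\overline{\chi}_2$ if and only if $\overline{\omega}=\overline{1}$, which by \Cref{lem: omega trivial reduction criterion} holds if and only if $q\equiv1\pmod{\ell}$. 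So if $q\not\equiv1\pmod\ell$, case (ii) of the principal series theorem gives $\overline{\rho}\cong\overline{\chi\omega}\oplus\overline{\chi}$. If $q\equiv1\pmod\ell$, case (iii) says $\overline{\rho}$ is decomposable if and only if $v(b/d)\ge 1-i'$. Here $i'$ is the maximal integer with $\chi\omega(g)\equiv\chi(g)\pmod{\lambda^{i'}}$ for all $g$.

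The remaining step, and the only real computation, is to identify $i'$ with $ei$. Since $\chi$ takes unit values, the congruence is equivalent to $\omega(g)\equiv1\pmod{\lambda^{i'}}$ for all $g$, that is, $q^{-k}\equiv1$ for all $k$. As in the proof of \Cref{lem: omega trivial reduction criterion}, $v(q^{-k}-1)=v(q^k-1)$, and $q-1\mid q^k-1$. Hence the minimum of $v_\lambda(q^k-1)$ over $k\neq0$ is attained at $k=1$, giving $i'=v_\lambda(q-1)$. Because $q-1\in\Z$ and $\lambda^e=\ell\calO_K$, this equals $e\cdot v_\ell(q-1)=ei$, where $i$ is maximal with $q\equiv1\pmod{\ell^i}$.

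The main obstacle is a bookkeeping point about conventions: whether $v$ in the statement is normalized so that $v(\lambda)=1$. The earlier principal series theorem measures $i$ in powers of $\lambda$, which is consistent with the factor $e$ here. A second small point is to check that the $b$ appearing in \Cref{special rep principal reduction case} matches the one in the principal series theorem with the same $U$. It does, because $\eta$ is literally $U\operatorname{diag}(\chi\omega,\chi)U^{-1}$ with that $U$ upper triangular. The sign discrepancy in the statement's matrix ($\chi\omega-\chi$ versus $\chi-\chi\omega$) only replaces $b$ by $-b$, which does not affect $v(b/d)$.
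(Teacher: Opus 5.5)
Your proposal is correct and follows the paper's proof essentially step for step. You get (i) from \Cref{lem: special representation decomposability condition}; for (ii) and (iii) you pass to the principal series $U(\chi\omega\oplus\chi)U^{-1}$ via \Cref{special rep principal reduction case}, apply \Cref{thm:decomposability criterion for reductions of principal series} with $\chi_1=\chi\omega$, $\chi_2=\chi$ together with \Cref{lem: omega trivial reduction criterion}, and finish with $v_\lambda(q-1)=e\,v_\ell(q-1)$. Your additional points are accurate: the explicit reducibility argument, the observation that the sign discrepancy in the displayed matrix only replaces $b$ by $-b$, and the case split ``$q\not\equiv 1\pmod{\ell}$'' (the paper's proof misprints it as $q\equiv 1$ at that step).
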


\begin{proof}
    If $a/d \in \calO_K^\times$, then $\overline{\rho}$ is indecomposable by \Cref{lem: special representation decomposability condition}. Suppose now that $a/d \in \lambda$. Then by \Cref{special rep principal reduction case}, $\overline{\rho}$ is the reduction of the principal series $\chi\omega\oplus\chi$. We can then assume without loss of generality that $\rho$ is a principal series and apply \Cref{thm:decomposability criterion for reductions of principal series}.
    
    If $\overline{\chi\omega} \neq \overline{\chi}$, then $\overline{\rho}$ is decomposable. Note that this is equivalent to $\overline{\omega} \neq \overline{1}$, which is further equivalent to $q \equiv 1 \pmod{\ell}$ by \Cref{lem: omega trivial reduction criterion}.

    Lastly, if $\overline{\chi\omega} = \overline{\chi}$ (or equivalently, $q \equiv 1 \pmod{\ell}$), then $\overline{\rho}$ is decomposable if and only if $v(b/d) \geq 1-j$, where $j$ is the maximal nonnegative integer such that
    \begin{equation*}
        \chi\omega(g) \equiv \chi(g) \pmod{\lambda^j}
    \end{equation*}
    for all $g \in W_F$. Equivalently,
    \begin{equation*}
        \omega(\Phi^k\tau) = q^{-k} \equiv 1 \pmod{\lambda^j}
    \end{equation*}
    for all $k \in \Z$, which in turn is equivalent to
    \begin{equation*}
        q \equiv 1 \pmod{\lambda^j},
    \end{equation*}
    i.e., $v_K(q-1) = j$. Recall that for any $x\in \Z_{\ell}$, we have $v_K(x) = ev_\ell(x)$, where $e$ is the ramification index of $K/\Q_{\ell}$. Setting $i := v_\ell(q-1)$, we obtain the desired result.
\end{proof}

\section{Comparison of Reductions of $\ell$-adic Representations of the Weil Group}
\label{comparison section}

Recall that $p$ and $\ell$ are distinct, odd primes. We denote by $F$ a finite extension of $\Q_p$ with ring of integers $\calO_F$, maximal ideal $\p$, and residue field $k_F$ of order $q$. By $E, E_1, E_2$, we denote quadratic extensions of $F$, with corresponding rings of integers $\calO_E, \calO_{E_1}, \calO_{E_2}$ and residue fields $k_E, k_{E_1}, k_{E_2}$. We denote the absolute Galois group, the Weil group, the inertia group, and the wild inertia group of $F$ by $G_F, W_F, I_F$, and $P_F$, respectively.

Let $K$ be a finite extension of $\Q_\ell$, with ring of integers $\calO_K$, maximal ideal $\lambda$, and residue field $k_K$.

By $\omega$, we denote the $1$-dimensional representation of $W_F$ corresponding to the absolute value on $F^\times$ via local class field theory. Fixing a Frobenius element $\Phi$, $\omega$ is explicitly given by $\omega(\Phi^k\tau) = q^{-k}$ for any $\tau \in I_F, k \in \Z$. We denote by $t_\ell: I_F \rightarrow \Z_\ell$ a fixed continuous, surjective homomorphism.

When considering a supercuspidal representation $\rho$ induced from $W_E$ (likewise, $W_{E_1}$ or $W_{E_2}$), we use $\sigma \in W_F \setminus W_E$ to denote a fixed representative of the nontrivial coset of $W_E$ in $W_F$.

In this section, we will determine when the reductions of two semisimple, continuous $\ell$-adic representations $\rho_1, \rho_2:W_F \rightarrow \GL_2(\overline{\Q}_\ell)$ are isomorphic, where $\rho_1$ and $\rho_2$ are principal series, nonexceptional supercuspidal representations, or special representations.

It is relatively straightforward to compare reductions that are both irreducible or both decomposable. Thus, the main goal of this section is to describe when two reducible but indecomposable reductions are isomorphic. For this, we will make use of the following, more general, theorem:

\begin{theorem}
\label{thm: isomorphism of indecomposable representations}
    Let $G$ be a group and $\rho_1, \rho_2:G \rightarrow \GL_2(L)$ be two indecomposable representations over a field $L$ that are defined by
    \begin{align*}
        \rho_1(g) &= \begin{pmatrix}
            \chi_1(g) & \psi_1(g) \\
            0 & \chi_2(g)
        \end{pmatrix} \\
        \rho_2(g) &= \begin{pmatrix}
            \theta_1(g) & \psi_2(g) \\
            0 & \theta_2(g)
        \end{pmatrix}
    \end{align*}
    for all $g \in G$. Then $\rho_1 \cong \rho_2$ if and only if $\chi_1 = \theta_1, \chi_2 = \theta_2$, and there exist $x \in L^\times, y \in L$ such that
    \begin{equation}
    \label{cohomology indecomposable rep equivalence eqn}
        x\psi_1+y(\chi_2-\chi_1) = \psi_2.
    \end{equation}
\end{theorem}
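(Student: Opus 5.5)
We prove the two directions separately, working directly with intertwining matrices. For the ``if'' direction, suppose $\chi_1=\theta_1$, $\chi_2=\theta_2$ and \eqref{cohomology indecomposable rep equivalence eqn} holds. We look for an upper triangular intertwiner
\begin{equation*}
    P=\begin{pmatrix} x & y \\ 0 & 1\end{pmatrix}\in \GL_2(L).
\end{equation*}
Comparing the upper right entries of $P\rho_1(g)$ and $\rho_2(g)P$ gives the condition $x\psi_1(g)+y\chi_2(g)=\theta_1(g)y+\psi_2(g)$. This is exactly \eqref{cohomology indecomposable rep equivalence eqn} once we use $\theta_1=\chi_1$. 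The diagonal entries agree trivially, so $P\rho_1P^{-1}=\rho_2$.

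For the ``only if'' direction, let $P=\begin{pmatrix} p & r\\ s & t\end{pmatrix}$ satisfy $P\rho_1(g)=\rho_2(g)P$ for all $g$. The key step is to show $s=0$. Comparing lower left entries gives $s\chi_1(g)=\theta_2(g)s$. If $s\neq0$, then $\chi_1=\theta_2$. Next compare the lower right entries: $s\psi_1(g)+t\chi_2(g)=\theta_2(g)t$, i.e.
\begin{equation*}
    s\psi_1=t(\chi_1-\chi_2).
\end{equation*}
Hence $\psi_1=(t/s)(\chi_1-\chi_2)=-(t/s)(\chi_2-\chi_1)$. By \Cref{thm: decomposability criterion for reducible representations}, $\rho_1$ would then be decomposable, a contradiction. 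So $s=0$. Since $P$ is invertible, $p,t\neq0$.

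With $s=0$, the diagonal comparisons give $p\chi_1=\theta_1p$ and $t\chi_2=\theta_2t$, so $\chi_1=\theta_1$ and $\chi_2=\theta_2$. The upper right entry gives
\begin{equation*}
    p\psi_1+r\chi_2=\theta_1 r+\psi_2 t,
\end{equation*}
that is, $p\psi_1+r(\chi_2-\chi_1)=t\psi_2$. Dividing by $t$ and setting $x=p/t\in L^\times$ and $y=r/t$ yields \eqref{cohomology indecomposable rep equivalence eqn}.

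The only nonroutine point is ruling out $s\neq0$, and that is handled by the decomposability criterion applied to $\rho_1$. Indecomposability of $\rho_1$ alone is what the argument uses. The indecomposability of $\rho_2$ is not needed, although it holds automatically once the two representations are isomorphic.
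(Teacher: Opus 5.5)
Your proof is correct and complete. The upper-triangular intertwiner $\begin{pmatrix} x & y \\ 0 & 1 \end{pmatrix}$ settles the ``if'' direction. In the ``only if'' direction you correctly use the easy half of \Cref{thm: decomposability criterion for reducible representations} to force the lower-left entry of any intertwiner to vanish, and the remaining entries then give exactly $\chi_1=\theta_1$, $\chi_2=\theta_2$ and \eqref{cohomology indecomposable rep equivalence eqn} with $x=p/t$, $y=r/t$.

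The paper gives no argument of its own and simply cites Suözer's thesis, so there is nothing to compare against. Your direct computation is the natural one: conceptually, an isomorphism must carry the unique $G$-stable line of $\rho_1$ to that of $\rho_2$, which amounts to the cocycle classes of $\psi_1\chi_2^{-1}$ and $\psi_2\chi_2^{-1}$ agreeing up to a scalar in $L^\times$.
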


\begin{proof}
    See Suözer \cite[Proposition 2.4.24]{Deniz}.
\end{proof}

We begin by comparing the reductions of two principal series:

\begin{theorem}
\label{thm: principal vs principal}
    Let $\rho_1, \rho_2:W_F \rightarrow \GL_2(\overline{\Q}_\ell)$ be two principal series such that $\rho_1(W_F), \rho_2(W_F) \subset \GL_2(\calO_K)$. Then $\rho_1 \cong \chi_1 \oplus \chi_2$ and $\rho_2 \cong \theta_1 \oplus \theta_2$, where $\chi_1, \chi_2, \theta_1, \theta_2$ are $1$-dimensional representations of $W_F$. Using Iwasawa decomposition, we can assume without loss of generality that there exist upper triangular matrices
    \begin{equation*}
        U_1 = \begin{pmatrix}
            a_1 & b_1 \\
            0 & d_1
        \end{pmatrix}, U_2 = \begin{pmatrix}
            a_2 & b_2 \\
            0 & d_2
        \end{pmatrix}
    \end{equation*}
    in $\GL_2(K)$ such that
    \begin{align*}
        \rho_1(g) &= U_1\begin{pmatrix}
            \chi_1(g) & 0 \\
            0 & \chi_2(g)
        \end{pmatrix}U_1^{-1} \\
        \rho_2(g) &= U_2\begin{pmatrix}
            \theta_1(g) & 0 \\
            0 & \theta_2(g)
        \end{pmatrix}U_2^{-1}
    \end{align*}
    for all $g \in W_F$.

    \begin{enumerate}[(i)]
        \item If $\overline{\rho}_1$ and $\overline{\rho}_2$ are both decomposable, then $\overline{\rho}_1 \cong \overline{\rho}_2$ if and only if $\{\overline{\chi}_1, \overline{\chi}_2\} = \{\overline{\theta}_1, \overline{\theta}_2\}$.

        \item If $\overline{\rho}_1$ and $\overline{\rho}_2$ are both indecomposable, then $\overline{\chi}_1 = \overline{\chi}_2$ and $\overline{\theta}_1 = \overline{\theta}_2$ by \Cref{thm:decomposability criterion for reductions of principal series}. In that case, $\overline{\rho}_1 \cong \overline{\rho}_2$ if and only if $\overline{\chi}_1 = \overline{\theta}_1$ and there exists $x \in \calO_K^\times$ such that
        \begin{equation*}
            x\frac{b_1}{d_1}\left(\chi_2(g)-\chi_1(g) \right) \equiv \frac{b_2}{d_2}\left(\theta_2(g) - \theta_1(g) \right) \pmod{\lambda}
        \end{equation*}
        for all $g \in W_F$.
    \end{enumerate}
\end{theorem}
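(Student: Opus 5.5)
Part (i) is immediate. If $\overline{\rho}_1$ and $\overline{\rho}_2$ are decomposable, then \Cref{thm:decomposability criterion for reductions of principal series} gives $\overline{\rho}_1 \cong \overline{\chi}_1 \oplus \overline{\chi}_2$ and $\overline{\rho}_2 \cong \overline{\theta}_1 \oplus \overline{\theta}_2$ in every case (i)--(iii) of that theorem. A direct sum of two characters determines the unordered pair of characters: by Jordan--H\"older, or simply by comparing the characters occurring as $G$-stable lines. Hence the two reductions are isomorphic exactly when $\{\overline{\chi}_1,\overline{\chi}_2\}=\{\overline{\theta}_1,\overline{\theta}_2\}$.

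For part (ii) the plan is to write both reductions explicitly in upper triangular form and then apply \Cref{thm: isomorphism of indecomposable representations}. Multiplying out gives
\begin{equation*}
U_1\begin{pmatrix}\chi_1 & 0\\ 0 & \chi_2\end{pmatrix}U_1^{-1} = \begin{pmatrix}\chi_1 & \frac{b_1}{d_1}(\chi_2-\chi_1)\\ 0 & \chi_2\end{pmatrix},
\end{equation*}
and the analogous formula holds for $\rho_2$. These entries lie in $\calO_K$ because $\rho_j(W_F)\subset \GL_2(\calO_K)$, so the matrices reduce mod $\lambda$ to upper triangular forms whose off-diagonal entries are $\overline{\psi}_1 = \overline{(b_1/d_1)(\chi_2-\chi_1)}$ and $\overline{\psi}_2 = \overline{(b_2/d_2)(\theta_2-\theta_1)}$.

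Indecomposability of $\overline{\rho}_j$ forces $\overline{\chi}_1=\overline{\chi}_2$ and $\overline{\theta}_1=\overline{\theta}_2$, as stated. \Cref{thm: isomorphism of indecomposable representations} then says that $\overline{\rho}_1\cong\overline{\rho}_2$ if and only if the following three conditions hold:
\begin{enumerate}[(a)]
\item $\overline{\chi}_1=\overline{\theta}_1$;
\item $\overline{\chi}_2=\overline{\theta}_2$;
\item there are $\bar x\in k_K^\times$ and $\bar y\in k_K$ with $\bar x\overline{\psi}_1+\bar y(\overline{\chi}_2-\overline{\chi}_1)=\overline{\psi}_2$.
\end{enumerate}
Since $\overline{\chi}_2-\overline{\chi}_1=0$, the $\bar y$ term vanishes. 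Condition (b) is implied by (a) together with the two equalities above. Lifting $\bar x$ to some $x\in\calO_K^\times$ turns (c) into exactly the congruence in the statement.

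The main care point is applying \Cref{thm: isomorphism of indecomposable representations} correctly: it is stated over a field $L$, so I would apply it with $L=k_K$ to the reduced matrices, after checking that $\frac{b_j}{d_j}(\chi_2-\chi_1)$ (resp. $\theta_2-\theta_1$) is $\lambda$-integral so that its reduction makes sense. Integrality is automatic from the lattice condition, even when $b_j/d_j\notin\calO_K$. A scalar $\bar x \in k_K^\times$ lifts to $\calO_K^\times$ and any unit reduces to a nonzero scalar, so the two formulations agree.
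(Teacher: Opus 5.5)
Your proposal is correct and takes essentially the paper's approach. The paper only cites Su\"ozer's thesis for this theorem, but its proof of the analogous principal-versus-special comparison (\Cref{thm: principal vs special}) runs exactly as yours does: write the reduction in upper triangular form with off-diagonal entry $\frac{b}{d}(\chi_2-\chi_1)$, apply \Cref{thm: isomorphism of indecomposable representations} over $k_K$, and discard the $y$-term because $\overline{\chi}_1=\overline{\chi}_2$. Part (i) via Jordan--H\"older is the expected argument, since for two-element multisets, set equality and multiset equality coincide.
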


\begin{proof}
    See Suözer \cite[Theorem 4.2.1]{Deniz}.
\end{proof}

Next, we compare the reductions of principal series and nonexceptional supercuspidal representations:

\begin{theorem}
\label{cohomology principal vs supercuspidal comparison}
    Let $\rho_1, \rho_2:W_F \rightarrow \GL_2(\overline{\Q}_\ell)$ be a principal series and a nonexceptional supercuspidal representation, respectively, such that $\rho_1(W_F), \rho_2(W_F) \subset \GL_2(\calO_K)$. Then $\rho_1 \cong \chi_1 \oplus \chi_2$ and $\rho_2 \cong \Ind_{W_E}^{W_F}\xi$, where $\chi_1, \chi_2$ are $1$-dimensional representations of $W_F$ and $\xi$ is a $1$-dimensional representation of $W_E$. Using Iwasawa decomposition, we can assume without loss of generality that there exist upper triangular matrices
    \begin{equation*}
        U_1 = \begin{pmatrix}
            a_1 & b_1 \\
            0 & d_1
        \end{pmatrix}, U_2 = \begin{pmatrix}
            a_2 & b_2 \\
            0 & d_2
        \end{pmatrix}
    \end{equation*}
    in $\GL_2(K)$ such that
    \begin{align*}
        \rho_1(g) &= U_1\begin{pmatrix}
            \chi_1(g) & 0 \\
            0 & \chi_2(g)
        \end{pmatrix}U_1^{-1} \\
        \rho_2(g) &= U_2\Ind_{W_E}^{W_F}\xi(g) U_2^{-1}
    \end{align*}
    for all $g \in W_F$.

    Then $\overline{\rho}_1 \cong \overline{\rho}_2$ if and only if $\overline{\rho}_1$ and $\overline{\rho}_2$ are both decomposable, $\overline{\chi}_1|_{W_E} = \overline{\chi}_2|_{W_E} = \overline{\xi}$, and $\{\overline{\chi}_1(\sigma), \overline{\chi}_2(\sigma)\} = S$, where $S$ is defined as:

    \begin{enumerate}[(i)]
        \item If $d_2/a_2 \in \calO_K^\times$, then $S = \left\{\pm\overline{\sqrt{\xi(\sigma^2)}}\right\}$.

        \item If $d_2/a_2 \in \lambda$, then $S = \{\pm\overline{b}_2\overline{\xi}(\sigma^2)/\overline{a}_2\}$.
    \end{enumerate}
\end{theorem}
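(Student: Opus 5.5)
The plan is to reduce everything to comparing semisimplifications, after showing that indecomposable reductions can never match. First, $\overline{\rho}_1$ is always reducible by \Cref{thm:decomposability criterion for reductions of principal series}. So if $\overline{\rho}_1 \cong \overline{\rho}_2$, then $\overline{\rho}_2$ is reducible. By \Cref{prop: irreducibility criterion for reductions of supercuspidal representations} this forces $\overline{\xi} = \overline{\xi}^\sigma$, and we may work throughout under that assumption.

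\emph{Step 1: rule out the indecomposable cases.} Since a decomposable representation is never isomorphic to an indecomposable one, it suffices to show that $\overline{\rho}_1$ and $\overline{\rho}_2$ cannot both be indecomposable.
\begin{enumerate}[(a)]
\item By \Cref{thm:decomposability criterion for reductions of principal series}, if $\overline{\rho}_1$ is indecomposable then $\overline{\chi}_1 = \overline{\chi}_2$. So $\overline{\rho}_1$ has equal diagonal characters in its upper triangular form.
\item By \Cref{thm: decomposability criterion for nonexceptional supercuspidal representations}, $\overline{\rho}_2$ can only be indecomposable in case $d_2/a_2 \in \lambda$. There it is upper triangular with diagonal characters $\overline{\theta}_1, \overline{\theta}_2$ as defined before \Cref{thm: decomposability criterion for nonexceptional supercuspidal representations for odd ell}.
\item These two characters satisfy $\overline{\theta}_2(\sigma) = -\overline{\theta}_1(\sigma)$. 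Here $\overline{\theta}_1(\sigma) = \overline{b_2/a_2}\,\overline{\xi}(\sigma^2)$ is nonzero, because $b_2/a_2 \in \calO_K^\times$ by \Cref{lem: d/a in lambda implies unit b/a for supercuspidal reductions}. Since $\ell$ is odd, this gives $\overline{\theta}_1 \neq \overline{\theta}_2$.
\item \Cref{thm: isomorphism of indecomposable representations} requires equal diagonal characters in the same order. Diagonal characters $\overline{\chi}_1 = \overline{\chi}_2$ on one side and $\overline{\theta}_1 \neq \overline{\theta}_2$ on the other cannot be matched, so two indecomposable reductions are never isomorphic.
\end{enumerate}
This is the step I expect to need the most care. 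I will double-check the orientation of the upper triangular forms, and that $\overline{\xi}(\sigma^2) \neq 0$, which holds because it is a unit.

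\emph{Step 2: both decomposable.} When both reductions are decomposable, they are semisimple, so $\overline{\rho}_1 \cong \overline{\rho}_2$ if and only if their multisets of characters agree. By \Cref{thm:decomposability criterion for reductions of principal series}, $\overline{\rho}_1 \cong \overline{\chi}_1 \oplus \overline{\chi}_2$. By \Cref{thm: decomposability criterion for nonexceptional supercuspidal representations}, $\overline{\rho}_2 \cong \overline{\theta}_1 \oplus \overline{\theta}_2$, with $\theta_1, \theta_2$ given by formulas (i) or (ii) according to whether $d_2/a_2$ is a unit or lies in $\lambda$.

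I will use that a character of $W_F$ is determined by its restriction to $W_E$ together with its value at $\sigma$, since $W_F = W_E \sqcup W_E\sigma$. It remains to read off these data for the $\theta_j$.
\begin{itemize}
\item In case (i), both $\overline{\theta}_j$ restrict to $\overline{\xi}$ on $W_E$, and their values at $\sigma$ are $\pm\overline{\sqrt{\xi(\sigma^2)}}$.
\item In case (ii), $\overline{\theta}_1|_{W_E} = \overline{\xi}$ and $\overline{\theta}_2|_{W_E} = \overline{\xi}^\sigma = \overline{\xi}$. Evaluating at $h = 1$ gives values at $\sigma$ equal to $\pm\overline{b}_2\overline{\xi}(\sigma^2)/\overline{a}_2$.
\end{itemize}

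Therefore $\{\overline{\chi}_1, \overline{\chi}_2\} = \{\overline{\theta}_1, \overline{\theta}_2\}$ holds exactly when $\overline{\chi}_1|_{W_E} = \overline{\chi}_2|_{W_E} = \overline{\xi}$ and $\{\overline{\chi}_1(\sigma), \overline{\chi}_2(\sigma)\} = S$. The two values in $S$ are distinct because $\ell$ is odd, so this equality of sets pins down the matching of characters. This gives both directions of the equivalence. For the converse direction, decomposability of both reductions is part of the hypothesis, so Step 2 applies directly.
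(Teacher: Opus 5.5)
Your proposal is correct and follows the approach the paper's framework sets up (the paper itself only cites Suözer's thesis for this proof). The both-indecomposable case is excluded because $\overline{\chi}_1=\overline{\chi}_2$ while $\overline{\theta}_2(\sigma)=-\overline{\theta}_1(\sigma)\neq\overline{\theta}_1(\sigma)$ for odd $\ell$, and in the decomposable case the characters are matched through their restrictions to $W_E$ and their values at $\sigma$; as a minor simplification, $\overline{\theta}_1(\sigma)\neq 0$ is automatic as a diagonal entry of the invertible upper triangular matrix $\overline{\rho}_2(\sigma)$, so you do not need the lemma that $b_2/a_2$ is a unit.
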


\begin{proof}
    See Suözer \cite[Theorem 4.2.2]{Deniz}.
\end{proof}

Lastly, we compare reductions of principal series and special representations. By \Cref{special rep principal reduction case}, it suffices to consider the case $a/d \in \calO_K^\times$ for special representations, since otherwise the reduction is isomorphic to the reduction of another principal series and the comparison reduces to \Cref{thm: principal vs principal}.

\begin{theorem}
\label{thm: principal vs special}
    Let $\rho_1, \rho_2:W_F \rightarrow \GL_2(\overline{\Q}_\ell)$ be a principal series and a special representation, respectively, such that $\rho_1(W_F), \rho_2(W_F) \subset \GL_2(\calO_K)$. Then $\chi_1\oplus \chi_2$ and $\chi\omega\oplus\chi$ are the Weil--Deligne representations associated with $\rho_1$ and $\rho_2$, respectively, where $\chi_1, \chi_2,\ \chi$ are $1$-dimensional representations of $W_F$. Using Iwasawa decomposition and \Cref{special representation matrix form}, we can assume without loss of generality that there exist upper triangular matrices
    \begin{equation*}
        U_1 = \begin{pmatrix}
            a_1 & b_1 \\
            0 & d_1
        \end{pmatrix}, U_2 = \begin{pmatrix}
            a_2 & b_2 \\
            0 & d_2
        \end{pmatrix}
    \end{equation*}
    in $\GL_2(K)$ such that
    \begin{align*}
        \rho_1(g) &= \begin{pmatrix}
            \chi_1(g) & \frac{b_1}{d_1}(\chi_2(g) - \chi_1(g)) \\
            0 & \chi_2(g)
        \end{pmatrix} \\
        \rho_2(\Phi^k\tau) &= \begin{pmatrix}
            \chi\omega(\Phi^k\tau) & \frac{a_2}{d_2}t_\ell(\tau)\chi\omega(\Phi^k\tau) + \frac{b_2}{d_2}(\chi(\Phi^k\tau) - \chi\omega(\Phi^k\tau)) \\
            0 & \chi(\Phi^k\tau)
        \end{pmatrix}.
    \end{align*}
    Suppose that $a_2/d_2 \in \calO_K^\times$. Then $\overline{\rho}_1 \cong \overline{\rho}_2$ if and only if $q \equiv 1 \pmod{\ell}$, $\overline{\chi} = \overline{\chi}_1 = \overline{\chi}_2$, and there exists $x \in \calO_K^\times$ such that
    \begin{equation}
    \label{principal vs special: congruence 1}
        \frac{b_1x}{d_1}(\chi_2(\Phi^k\tau) - \chi_1(\Phi^k\tau)) \equiv \frac{a_2}{d_2}t_\ell(\tau)\chi\omega(\Phi^k\tau) + \frac{b_2}{d_2}(\chi(\Phi^k\tau) - \chi\omega(\Phi^k\tau)) \pmod{\lambda}
    \end{equation}
    for all $\tau \in I_F, k \in \Z$.
\end{theorem}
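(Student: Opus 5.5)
Since $a_2/d_2 \in \calO_K^\times$, \Cref{lem: special representation decomposability condition} shows that $\overline{\rho}_2$ is indecomposable. If $\overline{\rho}_1 \cong \overline{\rho}_2$, then $\overline{\rho}_1$ is indecomposable as well. The matrix forms of $\rho_1$ and $\rho_2$ in the statement are already upper triangular, with diagonal characters $(\chi_1,\chi_2)$ and $(\chi\omega,\chi)$, so their reductions have the shape required by \Cref{thm: isomorphism of indecomposable representations}. The plan is to apply that theorem with $L = k_K$ and then translate each of its conditions into the stated ones.

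For the forward direction, assume $\overline{\rho}_1 \cong \overline{\rho}_2$. By \Cref{thm: isomorphism of indecomposable representations}, $\overline{\chi}_1 = \overline{\chi\omega}$ and $\overline{\chi}_2 = \overline{\chi}$. Because $\overline{\rho}_1$ is indecomposable, \Cref{thm:decomposability criterion for reductions of principal series} gives $\overline{\chi}_1 = \overline{\chi}_2$. Hence $\overline{\chi\omega} = \overline{\chi}$, so $\overline{\omega} = \overline{1}$ (since $\chi$ takes unit values), and \Cref{lem: omega trivial reduction criterion} gives $q \equiv 1 \pmod{\ell}$. Thus $\overline{\chi} = \overline{\chi}_1 = \overline{\chi}_2$. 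The same theorem also provides $\bar x \in k_K^\times$ and $\bar y \in k_K$ with
\begin{equation*}
\bar x\,\overline{\tfrac{b_1}{d_1}(\chi_2-\chi_1)} + \bar y(\overline{\chi}_2-\overline{\chi}_1) = \overline{\psi}_2 .
\end{equation*}
Since $\overline{\chi}_2 = \overline{\chi}_1$, the $\bar y$ term vanishes. Lifting $\bar x$ to some $x \in \calO_K^\times$ then gives exactly the congruence \eqref{principal vs special: congruence 1}.

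One point needs care here. The entry $\frac{b_1}{d_1}(\chi_2-\chi_1)$ is integral because $\rho_1(W_F) \subset \GL_2(\calO_K)$, but $b_1/d_1$ itself need not be integral. So "$x\frac{b_1}{d_1}(\chi_2-\chi_1) \bmod \lambda$" should be read as $x$ times the reduction of the integral function $\frac{b_1}{d_1}(\chi_2-\chi_1)$. With that reading, multiplying by a unit $x$ commutes with reduction, and no issue arises. The entry $\psi_2$ of $\rho_2$ is integral for the same reason.

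For the converse, assume $q \equiv 1 \pmod{\ell}$, $\overline{\chi} = \overline{\chi}_1 = \overline{\chi}_2$, and that the congruence holds for some unit $x$. By \Cref{lem: omega trivial reduction criterion}, $\overline{\chi\omega} = \overline{\chi}$, so the diagonal characters of $\overline{\rho}_1$ and $\overline{\rho}_2$ agree entrywise. The one step needing an argument is that \Cref{thm: isomorphism of indecomposable representations} assumes both representations are indecomposable, and $\overline{\rho}_1$ is not assumed indecomposable here. I would not use the theorem for this direction. Instead, I would check directly that conjugation by $\operatorname{diag}(\bar x,1)$ carries $\overline{\rho}_1$ to $\overline{\rho}_2$: it multiplies the upper-right entry by $\bar x$ and fixes the equal diagonal entries, so the congruence is exactly the statement that the conjugate equals $\overline{\rho}_2$. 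This proves the equivalence. As a consistency check, the congruence forces $\overline{\rho}_1$ to be indecomposable anyway, because $\overline{\rho}_2$ is.
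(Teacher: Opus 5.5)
Your proof is correct and follows the paper's argument: the indecomposability of $\overline{\rho}_2$ forces that of $\overline{\rho}_1$ and hence $\overline{\chi}_1=\overline{\chi}_2$. Then \Cref{thm: isomorphism of indecomposable representations} kills the $y$-term, and $\overline{\omega}=\overline{1}$ becomes $q\equiv 1 \pmod{\ell}$ via \Cref{lem: omega trivial reduction criterion}. Your one departure, conjugating directly by $\operatorname{diag}(\bar{x},1)$ in the converse, is slightly cleaner than the paper's appeal to that theorem, whose indecomposability hypothesis on $\overline{\rho}_1$ is not known in advance there. Your remark that only the product $\frac{b_1}{d_1}(\chi_2-\chi_1)$, not $b_1/d_1$ itself, needs to be integral is also correct.
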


\begin{proof}
    Since $a_2/d_2 \in \calO_K^\times$, $\overline{\rho}_2$ is indecomposable by \Cref{thm:decomposability criterion for reductions of special representations}, so $\overline{\rho}_1$ must also be indecomposable to be isomorphic to $\overline{\rho}_2$. By \Cref{thm:decomposability criterion for reductions of principal series}, this implies that $\overline{\chi}_1 = \overline{\chi}_2$.

    By \Cref{thm: isomorphism of indecomposable representations}, $\overline{\rho}_1 \cong \overline{\rho}_2$ if and only if $\overline{\chi\omega} = \overline{\chi}_1, \overline{\chi} = \overline{\chi}_2$, and there exist $x \in \calO_K^\times, y \in \calO_K$ such that
    \begin{equation}
    \label{principal vs special: congruence 2}
        \frac{b_1x}{d_1}(\chi_2(\Phi^k\tau) - \chi_1(\Phi^k\tau)) + y(\chi_2(\Phi^k\tau)-\chi_1(\Phi^k\tau)) \equiv \psi(\Phi^k\tau) \pmod{\lambda}
    \end{equation}
    for all $\tau \in I_F, k \in \Z$, where
    \begin{equation*}
        \psi(\Phi^k\tau) = \frac{a_2}{d_2}t_\ell(\tau)\chi\omega(\Phi^k\tau) + \frac{b_2}{d_2}(\chi(\Phi^k\tau) - \chi\omega(\Phi^k\tau)).
    \end{equation*}
    Observe that the term $y(\chi_2(\Phi^k\tau)-\chi_1(\Phi^k\tau))$ in \eqref{principal vs special: congruence 2} vanishes mod $\lambda$ since $\overline{\chi}_1 = \overline{\chi}_2$. Thus, we are left with
    \begin{equation}
        \frac{b_1x}{d_1}(\chi_2(\Phi^k\tau) - \chi_1(\Phi^k\tau)) \equiv \psi(\Phi^k\tau) \pmod{\lambda}.
    \end{equation}
    Plugging $\psi$ back in, we obtain \eqref{principal vs special: congruence 1}. To conclude, note that the identities $\overline{\chi\omega} = \overline{\chi}_1, \overline{\chi} = \overline{\chi}_2$ are equivalent to $\overline{\chi} = \overline{\chi}_1 = \overline{\chi}_2$ and $\overline{\omega} = \overline{1}$. By \Cref{lem: omega trivial reduction criterion}, this last identity is equivalent to $q \equiv 1 \pmod{\ell}$.
\end{proof}

Next, we compare the reductions of two nonexceptional supercuspidal representations:

\begin{theorem}
    Let $\rho_1, \rho_2 : W_F \rightarrow \GL_2(\overline{\Q}_\ell)$ be two nonexceptional supercuspidal representations such that $\rho_1(W_F), \rho_2(W_F) \subset \GL_2(\calO_K)$. Then $\rho_1 \cong \Ind_{W_{E_1}}^{W_F}\xi_1$ and $\rho_2 \cong \Ind_{W_{E_2} }^{W_F}\xi_2$, where $\xi_1, \xi_2$ are $1$-dimensional representations of $W_{E_1}, W_{E_2}$, respectively. Using Iwasawa decomposition, we can assume without loss of generality that there exist upper triangular matrices
    \begin{equation*}
        U_1 = \begin{pmatrix}
            a_1 & b_1 \\
            0 & d_1
        \end{pmatrix}, U_2 = \begin{pmatrix}
            a_2 & b_2 \\
            0 & d_2
        \end{pmatrix}
    \end{equation*}
    in $\GL_2(K)$ such that
    \begin{align*}
        \rho_1(g) &= U_1 \Ind_{W_{E_1}}^{W_F}\xi_1(g) U_1^{-1} \\
        \rho_2(g) &= U_2\Ind_{W_{E_2}}^{W_F}\xi_2(g) U_2^{-1}
    \end{align*}
    for all $g \in W_F$. Suppose further that $v(d_1/a_1) \leq v(d_2/a_2)$.

    \begin{enumerate}[(i)]
        \item If $\overline{\rho}_1$ and $\overline{\rho}_2$ are both irreducible and $E_1 = E_2$, then $\overline{\rho}_1 \cong \overline{\rho}_2$ if and only if $\overline{\xi}_1 = \overline{\xi}_2$ or $\overline{\xi}_1 = \overline{\xi}_2^\sigma$.

        \item If $\overline{\rho}_1$ and $\overline{\rho}_2$ are both irreducible and $E_1 \neq E_2$, then $\overline{\rho}_1 \cong \overline{\rho}_2$ if and only if
        \begin{equation*}
        \label{cohomology supercusp comparison summary thm eqn}
            \overline{\xi}_1|_{W_{E_1} \cap W_{E_2} } = \overline{\xi}_1^\sigma|_{W_{E_1} \cap W_{E_2} } = \overline{\xi}_2|_{W_{E_1} \cap W_{E_2} } = \overline{\xi}_2^\sigma|_{W_{E_1} \cap W_{E_2} }.
        \end{equation*}

        \item If $\overline{\rho}_1$ and $\overline{\rho}_2$ are both decomposable and $d_1/a_1, d_2/a_2 \in \calO_K^\times$, then $\overline{\rho}_1 \cong \overline{\rho}_2$ if and only if $E_1 = E_2$ and $\overline{\xi}_1 = \overline{\xi}_2$.

        \item If $\overline{\rho}_1$ and $\overline{\rho}_2$ are both decomposable and $d_1/a_1 \in \calO_K^\times, d_2/a_2 \in \lambda$, then $\overline{\rho}_1 \cong \overline{\rho}_2$ if and only if $E_1 = E_2$, $\overline{\xi}_1 = \overline{\xi}_2$, and
        \begin{equation*}
            \sqrt{\xi_1(\sigma^2)} \equiv \pm \frac{a_2}{b_2} \pmod{\lambda}.
        \end{equation*}

        \item If $\overline{\rho}_1$ and $\overline{\rho}_2$ are both decomposable and $d_1/a_1, d_2/a_2 \in \lambda$, then $\overline{\rho}_1 \cong \overline{\rho}_2$ if and only if $E_1 = E_2$, $\overline{\xi}_1 = \overline{\xi}_2$, and
        \begin{equation*}
            \frac{b_1}{a_1} \equiv \pm\frac{b_2}{a_2} \pmod{\lambda}.
        \end{equation*}

        \item If $\overline{\rho}_1$ and $\overline{\rho}_2$ are both reducible and indecomposable, then  $\overline{\rho}_1 \cong \overline{\rho}_2$ if and only if $E_1 = E_2 =: E, \overline{\rho}_1|_{W_E} \cong \overline{\rho}_2|_{W_E}$ and
        \begin{equation*}
        \label{cohomology supercusp comparison summary thm congruence}
            \frac{b_1}{a_1} \equiv \frac{b_2}{a_2} \pmod{\lambda}.
        \end{equation*}
    \end{enumerate}
\end{theorem}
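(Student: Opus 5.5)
The proof goes case by case, reducing each part to the structural results for a single supercuspidal reduction. Those are \Cref{prop: irreducibility criterion for reductions of supercuspidal representations}, \Cref{supercuspidal reduction: unit d/a implies induced representation}, \Cref{thm: decomposability criterion for nonexceptional supercuspidal representations for odd ell} and \Cref{thm: decomposability criterion for nonexceptional supercuspidal representations}. Two further tools are used: Brauer--Nesbitt-type character comparison for the semisimple cases, and \Cref{thm: isomorphism of indecomposable representations} for the indecomposable case. Throughout, two semisimple mod $\lambda$ representations are isomorphic exactly when their constituents agree.

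\textbf{Irreducible cases (i) and (ii).} If $\overline{\rho}_j$ is irreducible, \Cref{prop: irreducibility criterion for reductions of supercuspidal representations} gives $\overline{\rho}_j\cong\Ind_{W_{E_j}}^{W_F}\overline{\xi}_j$.

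In case (i) ($E_1=E_2$), apply Frobenius reciprocity to $\Hom(\Ind\overline{\xi}_1,\Ind\overline{\xi}_2)$. By Mackey this equals $\Hom_{W_E}(\overline{\xi}_1,\overline{\xi}_2\oplus\overline{\xi}_2^\sigma)$, which is nonzero if and only if $\overline{\xi}_1\in\{\overline{\xi}_2,\overline{\xi}_2^\sigma\}$.

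In case (ii) ($E_1\neq E_2$), Mackey gives that $\overline{\rho}_2|_{W_{E_1}}\cong\Ind_{W_{E_1}\cap W_{E_2}}^{W_{E_1}}(\overline{\xi}_2|_{W_{E_1}\cap W_{E_2}})$. Reciprocity then shows the Hom space is nonzero if and only if $\overline{\xi}_1|_{W_{E_1}\cap W_{E_2}}$ is a constituent of $(\overline{\xi}_2\oplus\overline{\xi}_2^{\sigma})|_{W_{E_1}\cap W_{E_2}}$, which is in fact a single character. Irreducibility of each $\overline{\rho}_j$ forbids $\overline{\xi}_j=\overline{\xi}_j^\sigma$ on $W_{E_j}$. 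However, restricted to the index-$4$ subgroup, the three quadratic characters are killed, and this yields the four-fold equality in the statement. Symmetrizing in $1,2$ gives the stated criterion.

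\textbf{Decomposable cases (iii)–(v).} Here $\overline{\rho}_j\cong\overline{\theta}^{(j)}_1\oplus\overline{\theta}^{(j)}_2$ with the explicit characters of \Cref{thm: decomposability criterion for nonexceptional supercuspidal representations}, and we compare unordered pairs of characters of $W_F$.

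First restrict to $W_{E_1}\cap W_{E_2}$, where every constituent reduces to $\overline{\xi}_j$ or $\overline{\xi}_j^\sigma$. Since $\overline{\xi}_j=\overline{\xi}_j^\sigma$ in the reducible case, each $\overline{\rho}_j$ restricted to $W_{E_j}$ is $\overline{\xi}_j\oplus\overline{\xi}_j$. The pair $\{\overline{\theta}_1^{(j)},\overline{\theta}_2^{(j)}\}$ therefore consists of the two extensions of $\overline{\xi}_j$ to $W_F$, which differ by the quadratic character cutting out $E_j$. Equality of the pairs then forces that quadratic character to be the same, hence $E_1=E_2$, and also $\overline{\xi}_1=\overline{\xi}_2$.

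It remains to match values at $\sigma$:
\begin{itemize}
\item in case (iii), both sets are $\pm\overline{\sqrt{\xi(\sigma^2)}}$, so they agree automatically once $\overline{\xi}_1=\overline{\xi}_2$;
\item in case (iv), we need $\pm\sqrt{\xi_1(\sigma^2)}\equiv\pm\frac{b_2}{a_2}\xi_2(\sigma^2)$. Using that $\theta$ is a character, $\theta(\sigma)^2=\xi(\sigma^2)$, so $(b_2/a_2)^2\xi_2(\sigma^2)\equiv1$. This rewrites the condition as the stated $\sqrt{\xi_1(\sigma^2)}\equiv\pm a_2/b_2$;
\item in case (v), we compare $\pm\frac{b_j}{a_j}\overline{\xi}(\sigma^2)$ directly.
\end{itemize}

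\textbf{Indecomposable case (vi).} Here $d_j/a_j\in\lambda$, since otherwise the reduction is decomposable. Write $\overline{\rho}_j$ in the upper triangular form preceding \Cref{thm: decomposability criterion for nonexceptional supercuspidal representations for odd ell} and apply \Cref{thm: isomorphism of indecomposable representations}.

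Equality of the diagonal characters on $W_{E_j}$ gives $\overline{\xi}_1=\overline{\xi}_2$ there. To get $E_1=E_2$, argue that if $E_1\ne E_2$ then $\overline{\rho}_1|_{W_{E_1}}$ is indecomposable, by \Cref{cohomology supercusp reduction odd ell decomposability equiv to restriction decomposability}. Meanwhile $\overline{\rho}_2|_{W_{E_1}}$ is induced from $W_{E_1}\cap W_{E_2}$ with equal conjugates, hence decomposable, which is a contradiction. I expect this step to be the main obstacle, because the induced-representation analysis is delicate.

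With $E_1=E_2$, the diagonal conditions at $\sigma$ give $\overline{b_1/a_1}\,\overline{\xi}(\sigma^2)=\overline{b_2/a_2}\,\overline{\xi}(\sigma^2)$, i.e. $b_1/a_1\equiv b_2/a_2$; the sign is now fixed because the ordering is fixed. The extension-class condition \eqref{cohomology indecomposable rep equivalence eqn}, evaluated on $W_E$, is exactly $\overline{\rho}_1|_{W_E}\cong\overline{\rho}_2|_{W_E}$ by \Cref{thm: principal vs principal}(ii). On the coset $W_E\sigma$ it is determined by the cocycle relation $\psi(h\sigma)=\theta_1(h)\psi(\sigma)+\psi(h)\theta_2(\sigma)$, so it is enough to check it at $\sigma$. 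That check should follow from $b_1/a_1\equiv b_2/a_2$ together with the freedom in $y$, after using $\overline{\theta}_1(\sigma)\neq\overline{\theta}_2(\sigma)$ (as $\ell$ is odd).
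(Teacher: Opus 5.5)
Your case-by-case strategy is sound, and (i), (iii)--(v) and the extension-class bookkeeping in (vi) work as you describe. However, the step you flag in (vi) is not merely delicate: it is false. When $d_2/a_2\in\lambda$, $\overline{\rho}_2$ is the reduction of $\Ind_{W_{E_2}}^{W_F}\xi_2$ with respect to a lattice that is not the induced one. So Mackey's formula for $\rho_2|_{W_{E_1}}$ over $K$ tells you nothing about $\overline{\rho}_2|_{W_{E_1}}$. In fact $\overline{\rho}_2|_{W_{E_1}}$ is indecomposable. Put $H:=W_{E_1}\cap W_{E_2}$, which has index $2$ in $W_{E_2}$. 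By \Cref{cohomology supercusp reduction odd ell decomposability equiv to restriction decomposability}, $\overline{\rho}_2|_{W_{E_2}}$ is a nonsplit self-extension of $\overline{\xi}_2$. Its class is the nonzero additive homomorphism $h\mapsto\overline{\psi}_2(h)/\overline{\xi}_2(h)$ from $W_{E_2}$ to $k_K$. If this vanished on $H$, it would factor through $W_{E_2}/H\cong\Z/2$, which is impossible in characteristic $\ell\neq 2$. Hence $\overline{\rho}_2|_H$, and a fortiori $\overline{\rho}_2|_{W_{E_1}}$, is indecomposable, and the contradiction you want never arises.

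The repair is the argument you already used in (iii)--(v). \Cref{thm: isomorphism of indecomposable representations} forces the diagonal characters to agree as characters of all of $W_F$, not just on $W_{E_j}$. The explicit formulas give $\overline{\theta}^{(j)}_2=\overline{\theta}^{(j)}_1\cdot\overline{\epsilon}_{E_j}$, where $\epsilon_{E_j}$ is the quadratic character of $W_F/W_{E_j}$. So $\overline{\epsilon}_{E_1}=\overline{\epsilon}_{E_2}$, which gives $E_1=E_2$ since $\ell\neq 2$. With that replacement (vi) goes through. The coset check needs only $y=\Delta(\sigma)/(2\overline{\theta}_1(\sigma))$ with $\Delta=x\overline{\psi}_1-\overline{\psi}_2$; the congruence $b_1/a_1\equiv b_2/a_2$ enters only through equality of the diagonal characters at $\sigma$.

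Separately, in (ii) your claim that $(\overline{\xi}_2\oplus\overline{\xi}_2^{\sigma})|_H$ is a single character is not automatic, and ``the quadratic characters are killed'' does not prove it. A clean fix: reciprocity and Mackey give $\operatorname{Hom}_{W_F}(\overline{\rho}_1,\overline{\rho}_2)\neq 0$ if and only if $\overline{\xi}_1|_H=\overline{\xi}_2|_H$. Since $\Ind\overline{\xi}_j\cong\Ind\overline{\xi}_j^{\sigma}$, you may replace either $\xi_j$ by its conjugate, so an isomorphism forces all four restrictions to coincide. Conversely, the four-fold equality gives a nonzero, hence invertible, map between irreducibles.
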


\begin{proof}
    See Suözer \cite[Theorem 4.2.8]{Deniz}.
\end{proof}

Next, we compare reductions of supercuspidal representations with those of special representations. It turns out that under the assumption $a/d \in \calO_K^\times$ for special representations, the reductions cannot be isomorphic if $E/F$ is ramified:

\begin{proposition}
\label{supercuspidal vs special: ramified case}
    Let $\rho_1, \rho_2:W_F \rightarrow \GL_2(\overline{\Q}_\ell)$ be a supercuspidal representation and a special representation, respectively, such that $\rho_1(W_F), \rho_2(W_F) \subset \GL_2(\calO_K)$. Then $\Ind_{W_E}^{W_F}\xi$ and $\chi\omega\oplus\chi$ are the Weil--Deligne representations associated with $\rho_1$ and $\rho_2$, respectively, where $\xi$ is a $1$-dimensional representation of $W_E$ and $\chi$ is a $1$-dimensional representation of $W_F$. Using Iwasawa decomposition and \Cref{special representation matrix form}, we can assume without loss of generality that there exist upper triangular matrices 
    \begin{equation*}
        U_1 = \begin{pmatrix}
            a_1 & b_1 \\
            0 & d_1
        \end{pmatrix}, U_2 = \begin{pmatrix}
            a_2 & b_2 \\
            0 & d_2
        \end{pmatrix}
    \end{equation*}
    in $\GL_2(K)$ such that
    \begin{align*}
        \rho_1(\Phi^k\tau) &= U_1\Ind_{W_E}^{W_F}\xi(\Phi^k\tau) U_1^{-1} \\
        \rho_2(\Phi^k\tau) &= \begin{pmatrix}
            \chi\omega(\Phi^k\tau) & \frac{a_2}{d_2}t_\ell(\tau)\chi\omega(\Phi^k\tau) + \frac{b_2}{d_2}(\chi(\Phi^k\tau) - \chi\omega(\Phi^k\tau)) \\
            0 & \chi(\Phi^k\tau)
        \end{pmatrix}
    \end{align*}
    for all $\tau \in I_F, k \in \Z$.

    Suppose that $a_2/d_2 \in \calO_K^\times$. If $E/F$ is ramified, then $\overline{\rho}_1$ and $\overline{\rho}_2$ are not isomorphic.
\end{proposition}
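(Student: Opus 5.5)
Since $a_2/d_2 \in \calO_K^\times$, $\overline{\rho}_2$ is reducible and indecomposable by \Cref{thm:decomposability criterion for reductions of special representations}(i). If $\overline{\rho}_1 \cong \overline{\rho}_2$, then $\overline{\rho}_1$ must also be reducible and indecomposable. By \Cref{thm: decomposability criterion for nonexceptional supercuspidal representations}, this forces $\overline{\xi} = \overline{\xi}^\sigma$, $d_1/a_1 \in \lambda$, and $v(b_1/d_1) = -i$. Moreover, by \Cref{cohomology supercusp reduction odd ell decomposability equiv to restriction decomposability}, the restriction $\overline{\rho}_1|_{W_E}$ is then indecomposable. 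The plan is to obtain a contradiction by restricting both representations to the inertia subgroup $I_E \subset W_E$ and comparing them there.

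First I use that $E/F$ is ramified. Then $I_E$ has index $2$ in $I_F$, and since $\ell$ is odd, $t_\ell(I_E) = t_\ell(I_F)$ up to a unit factor. Indeed, $t_\ell(I_E)$ is a closed subgroup of $\Z_\ell$ of index dividing $2$, and hence equals $\Z_\ell$. So there is $\tau_0 \in I_E$ with $t_\ell(\tau_0) \in \Z_\ell^\times$. On $\tau \in I_E$ we have $\omega(\tau) = 1$, so the upper-right entry of $\overline{\rho}_2(\tau)$ is $\overline{(a_2/d_2)}\, t_\ell(\tau)\,\overline{\chi}(\tau)$. This entry is nonzero at $\tau_0$, so $\overline{\rho}_2(\tau_0)$ is a non-semisimple unipotent-times-scalar matrix of infinite-order type. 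Its unipotent part $\begin{pmatrix}1 & u\\ 0 & 1\end{pmatrix}$ with $u \neq 0$ has order $\ell$, and more importantly the upper-right entry grows linearly in $t_\ell$.

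Next I examine $\overline{\rho}_1$ on $I_E$. Since $W_E$ is normal in $W_F$, $\rho_1|_{W_E} \cong \xi \oplus \xi^\sigma$. By the explicit form in the paragraph before \Cref{thm: decomposability criterion for nonexceptional supercuspidal representations for odd ell}, on $h \in W_E$ we have
\[
\overline{\rho}_1(h) = \begin{pmatrix}\overline{\xi}(h) & \overline{(b_1/d_1)(\xi^\sigma(h)-\xi(h))}\\ 0 & \overline{\xi^\sigma}(h)\end{pmatrix}.
\]
The key point is that $\xi$ and $\xi^\sigma$ are characters of $W_E$, so on $I_E$ they factor through the abelianization, which via class field theory is $\calO_E^\times$, a profinite group. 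Their restriction to the tame quotient factors through $k_E^\times$, of order prime to $p$. I then compare with $\overline{\rho}_2$, which has the form $\begin{pmatrix}\overline{\chi}(\tau) & \overline{(a_2/d_2)}\,\overline{t_\ell(\tau)\chi(\tau)}\\ 0 & \overline{\chi}(\tau)\end{pmatrix}$ on $I_E$.

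By \Cref{thm: isomorphism of indecomposable representations} applied to $\overline{\rho}_1|_{I_E}$ and $\overline{\rho}_2|_{I_E}$, isomorphism would require $x \in k_K^\times$ and $y$ with
\[
x\overline{\psi_1} + y(\overline{\chi}-\overline{\chi}) = \overline{\psi_2}
\]
on $I_E$. The $y$-term vanishes, so this reduces to $\overline{(b_1/d_1)(\xi^\sigma-\xi)}(\tau) = c\, t_\ell(\tau)$ for a nonzero constant $c$, after absorbing $\overline{\chi} = \overline{\xi}$. Note also that the case where $\overline{\rho}_1|_{I_E}$ is decomposable already gives a contradiction, since $\overline{\rho}_2|_{I_E}$ is not.

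The main obstacle is ruling out this last identity. The left side is a function of the form $\overline{\pi^{-i}(\xi^\sigma/\xi - 1)}\cdot \overline{\xi}$, that is, a "logarithmic derivative" of the character $\xi^\sigma\xi^{-1}$ modulo $\lambda^{i+1}$. I would argue via the homomorphism property. Set $\mu = \xi^\sigma\xi^{-1}$, which is $\equiv 1 \bmod \lambda^i$. Then $\tau \mapsto \overline{(\mu(\tau)-1)/\pi^i}$ is an additive homomorphism $I_E \to k_K$, and it must equal a nonzero multiple of $\overline{t_\ell}$. For $\sigma$ lifting the generator of $\operatorname{Gal}(E/F)$ we have $\mu^\sigma = \mu^{-1}$, while $t_\ell(\sigma\tau\sigma^{-1}) = t_\ell(\tau)$ because conjugation by $\sigma \in I_F$ (ramified case, $\sigma$ can be chosen in $I_F$) acts trivially on the abelian tame quotient. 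The homomorphism is therefore both $\sigma$-anti-invariant and $\sigma$-invariant, so it is zero, since $2$ is invertible in $k_K$. This contradicts $c \neq 0$ and $t_\ell(\tau_0)$ being a unit, so the two reductions are not isomorphic.
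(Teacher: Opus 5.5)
Your argument is correct, but it takes a much longer route than the paper. The paper just compares traces. Since $E/F$ is ramified there is $g\in I_F\setminus W_E$. Then $\Tr\rho_1(g)=0$ because $\rho_1$ is induced from $W_E$, while $\Tr\rho_2(g)=\chi\omega(g)+\chi(g)=2\chi(g)$ because $\omega(g)=1$. As $\ell$ is odd this is a unit, so already the semisimplifications of $\overline{\rho}_1$ and $\overline{\rho}_2$ differ. That proof never uses the hypothesis $a_2/d_2\in\calO_K^\times$, the decomposability analysis, or \Cref{thm: isomorphism of indecomposable representations}.

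You instead pass to $I_E$, where the traces can agree. Via the extension-class criterion you reduce to asking whether the ``logarithmic derivative'' $f=\overline{(\mu-1)/\pi^i}$ of $\mu=\xi^\sigma\xi^{-1}$ can be a nonzero multiple of $\overline{t_\ell}$. You rule this out because $f$ is $\sigma$-anti-invariant, while $t_\ell$ is invariant under conjugation by $\sigma\in I_F$.

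What your route buys is conceptual. Essentially the same argument, with $i$ computed on $I_E$ rather than $W_E$, gives the stronger fact $\overline{\rho}_1|_{I_E}\not\cong\overline{\rho}_2|_{I_E}$, which no trace argument can give. It also explains the ramified/unramified dichotomy: for unramified $E$ one takes $\sigma=\Phi$, and $t_\ell(\Phi\tau\Phi^{-1})=q^{-1}t_\ell(\tau)\equiv -t_\ell(\tau)$ when $q\equiv -1\pmod{\ell}$. So there the signs match, in line with \Cref{thm: special vs supercuspidal comparison unramified case}.

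A few points to tidy:
\begin{enumerate}
\item Additivity of $f$ needs $i\ge 1$, which you have from $\overline{\xi}=\overline{\xi}^\sigma$, so that the cross term $(\mu(\tau)-1)(\mu(\tau')-1)/\pi^i$ vanishes mod $\lambda$. You should say this.
\item The equality $v(b_1/d_1)=-i$ uses integrality of $\psi_1$ on top of the decomposability criterion.
\item The paragraph about class field theory and $k_E^\times$ plays no role, and it presumes $\xi$ is tamely ramified.
\item The ``infinite-order type'' remark is also unnecessary, and inaccurate since matrices over $k_K$ have finite order.
\end{enumerate}
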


\begin{proof}
    Suppose that $\overline{\rho}_1 \cong \overline{\rho}_2$ and that $E/F$ is ramified. Then $\Tr(\overline{\rho}_1(g)) = \Tr(\overline{\rho}_2(g))$ for all $g \in W_F$. Pick $g \not\in W_E$. Note that 
    \begin{equation*}
        \Tr(\rho_2(g)) = \chi\omega(g)+\chi(g), \Tr(\rho_1(g))=0,
    \end{equation*}
    so
    \begin{equation*}
        \overline{\chi\omega}(g)+\overline{\chi}(g) \equiv 0 \pmod{\lambda},
    \end{equation*}
    or equivalently,
    \begin{equation*}
        \omega(g) \equiv -1 \pmod{\lambda}
    \end{equation*}
    for all $g\not\in W_E$. Pick $g$ in $I_F \setminus W_E$ so that $\omega(g)=1$. (Note that this can be done since $E/F$ is ramified.) Then we get $1 \equiv -1 \pmod{\lambda}$, which is a contradiction. Hence, $\overline{\rho}_1$ and $\overline{\rho}_2$ are not isomorphic.
\end{proof}

Next, we consider the case where $E/F$ is unramified, which is more complicated as $\overline{\rho}_1$ and $\overline{\rho}_2$ can indeed be isomorphic in this case. We start with a few fairly simple lemmas:

\begin{lemma}
\label{lem: omega factors through unramified ext}
    Suppose that $E/F$ is unramified. Then $\ker\overline{\omega} = W_E$ if and only if $q \equiv -1 \pmod{\ell}$.
\end{lemma}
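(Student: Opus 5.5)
We prove the two implications directly from the explicit formula $\omega(\Phi^k\tau) = q^{-k}$ for $\tau \in I_F$, $k \in \Z$. The first step is to describe $W_E$ inside $W_F$. Since $E/F$ is an unramified quadratic extension, $W_E$ contains all of $I_F$ (indeed $I_E = I_F$). Its image in $W_F/I_F \cong \Z$ is the subgroup $2\Z$, because the residue degree of $E/F$ is $2$. Hence
\begin{equation*}
    W_E = \{\Phi^k\tau : \tau \in I_F,\ k \in 2\Z\}.
\end{equation*}
We may take $\sigma = \Phi$ as the representative of the nontrivial coset.

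Next we compute $\ker\overline{\omega}$. We have $\Phi^k\tau \in \ker\overline{\omega}$ if and only if $q^{-k} \equiv 1 \pmod{\lambda}$. Since $q \in \calO_K^\times$, this is equivalent to $q^{k} \equiv 1 \pmod{\lambda}$. As $q^k$ is rational and prime to $\ell$, this is in turn equivalent to $q^k \equiv 1 \pmod{\ell}$, by the same reasoning as in \Cref{lem: omega trivial reduction criterion}. So $\ker\overline{\omega}$ is the preimage in $W_F$ of the subgroup $m\Z \subset \Z$, where $m$ is the multiplicative order of $q$ modulo $\ell$. In other words,
\begin{equation*}
    \ker\overline{\omega} = \{\Phi^k\tau : \tau \in I_F,\ m \mid k\}.
\end{equation*}

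Comparing the two descriptions, $\ker\overline{\omega} = W_E$ holds if and only if $m\Z = 2\Z$, that is, $m = 2$. The condition $m = 2$ means $q^2 \equiv 1 \pmod{\ell}$ and $q \not\equiv 1 \pmod{\ell}$. Since $\ell$ is prime, $q^2 \equiv 1$ gives $q \equiv \pm 1 \pmod{\ell}$. Because $\ell$ is odd, $1 \not\equiv -1 \pmod{\ell}$, so $m = 2$ is equivalent to $q \equiv -1 \pmod{\ell}$.

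For the forward direction, evaluate at $\Phi$ and $\Phi^2$: $\overline{\omega}(\Phi) \neq 1$ because $\Phi \notin W_E$, and $\overline{\omega}(\Phi^2) = 1$ because $\Phi^2 \in W_E$. Conversely, $q \equiv -1 \pmod{\ell}$ gives $q^k \equiv (-1)^k \pmod{\ell}$, so the kernel is exactly the set of $\Phi^k\tau$ with $k$ even. The only step needing care is the identification of $W_E$ in the unramified case, which follows from standard ramification theory; the rest is routine.
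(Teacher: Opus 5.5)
Your proof is correct and is essentially the paper's argument: both identify $W_E$ as the set of $\Phi^k\tau$ with $k$ even and compute $\ker\overline{\omega}$ from $\omega(\Phi^k\tau)=q^{-k}$, which reduces the question to whether $q$ has order exactly $2$ modulo $\ell$. The only difference is packaging. You handle both directions at once via the multiplicative order $m$ of $q$ modulo $\ell$, while the paper proves the forward direction from the index-$2$ observation together with the lemma on when $\overline{\omega}$ is trivial, and proves the converse by the explicit parity computation.
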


\begin{proof}
    Suppose that $\ker\overline{\omega} = W_E$. Then $\overline{\omega}$ has order exactly $2$ as $W_E$ has index $2$ in $W_F$. Since $\omega(W_F) = q^\Z$, this gives us $q^2 \equiv 1 \pmod{\ell}$, or equivalently $q \equiv \pm 1 \pmod{\ell}$. But $\overline{\omega}$ is nontrivial, so $q \not\equiv 1 \pmod{\ell}$ by \Cref{lem: omega trivial reduction criterion}. We conclude that $q \equiv -1 \pmod{\lambda}$.

    Conversely, suppose that $q \equiv -1 \pmod{\ell}$. We have $\omega(\Phi^k\tau) = q^{-k}$ for all $\tau \in I_F, k \in \Z$, so $\overline{\omega}(\Phi^k\tau) = \overline{1}$ if and only if $k$ is even. Since $E/F$ is unramified, this is equivalent to $\Phi^k\tau \in W_E$. Thus, $\ker\overline{\omega} = W_E$.
\end{proof}

\begin{lemma}
\label{lem: special vs supercuspidal 4 congruences}
    With the same setup as in \Cref{supercuspidal vs special: ramified case}, suppose that $\overline{\rho}_1$ is indecomposable. Then $\overline{\chi}|_{W_E} = \overline{\xi}, q \equiv -1 \pmod{\ell}$, and $\chi(\sigma) \equiv -\frac{a_1}{b_1} \pmod{\lambda}$ if and only if the congruences
    \begin{align}
        \chi\omega(h) &\equiv \xi(h) \pmod{\lambda} \label{4 congruences lemma congruence 1} \\
        \chi(h) &\equiv \xi^\sigma(h) \pmod{\lambda} \label{4 congruences lemma congruence 2} \\
        \chi\omega(h\sigma) &\equiv \frac{b_1}{a_1}\xi(\sigma h \sigma) \pmod{\lambda} \label{4 congruences lemma congruence 3}\\
        \chi(h\sigma) &\equiv -\frac{b_1}{a_1}\xi(\sigma h \sigma) \pmod{\lambda} \label{4 congruences lemma congruence 4}
    \end{align}
    hold for all $h \in W_E$.
\end{lemma}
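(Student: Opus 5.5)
The plan is to prove both implications by direct computation on the upper triangular form of $\overline{\rho}_1$ given just before \Cref{thm: decomposability criterion for nonexceptional supercuspidal representations for odd ell}. First I record what indecomposability of $\overline{\rho}_1$ forces.

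By \Cref{thm: decomposability criterion for nonexceptional supercuspidal representations}, $\overline{\rho}_1$ cannot be irreducible, so $\overline{\xi} = \overline{\xi}^\sigma$. It cannot have $d_1/a_1 \in \calO_K^\times$ either, since then it decomposes. Hence $d_1/a_1 \in \lambda$, and \Cref{lem: d/a in lambda implies unit b/a for supercuspidal reductions} gives $b_1/a_1 \in \calO_K^\times$. In this case the diagonal characters of $\overline{\rho}_1$ are $\overline{\theta}_1, \overline{\theta}_2$ as displayed there, and \eqref{4 congruences lemma congruence 1}--\eqref{4 congruences lemma congruence 4} say exactly that $\overline{\chi\omega} = \overline{\theta}_1$ and $\overline{\chi} = \overline{\theta}_2$.

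I will also use the identity $\left(\frac{b_1}{a_1}\right)^2 \xi(\sigma^2) \equiv 1 \pmod{\lambda}$. To get it, compare $\det \overline{\rho}_1(\sigma) = \overline{\theta}_1(\sigma)\overline{\theta}_2(\sigma) = -(b_1/a_1)^2\xi(\sigma^2)^2$ with $\det \Ind_{W_E}^{W_F}\xi(\sigma) = -\xi(\sigma^2)$; these agree because conjugation by $U_1$ preserves determinants.

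For ($\Leftarrow$), assume \eqref{4 congruences lemma congruence 1}--\eqref{4 congruences lemma congruence 4}.
\begin{enumerate}[(a)]
\item Combining \eqref{4 congruences lemma congruence 2} with $\overline{\xi}^\sigma = \overline{\xi}$ gives $\overline{\chi}|_{W_E} = \overline{\xi}$.
\item Dividing \eqref{4 congruences lemma congruence 1} by \eqref{4 congruences lemma congruence 2} gives $\overline{\omega}|_{W_E} = \overline{1}$.
\item Setting $h = 1$ in \eqref{4 congruences lemma congruence 3} and \eqref{4 congruences lemma congruence 4} and dividing (using $b_1/a_1 \in \calO_K^\times$) gives $\overline{\omega}(\sigma) = -1$. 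Since $\ell$ is odd, $\overline{\omega}$ is therefore nontrivial with kernel containing $W_E$.
\item Because $[W_F : W_E] = 2$, the kernel of $\overline{\omega}$ is exactly $W_E$, so $\overline{\omega}$ has order $2$. As $\omega(W_F) = q^{\Z}$, this means $q^2 \equiv 1$ but $q \not\equiv 1 \pmod{\ell}$ (by \Cref{lem: omega trivial reduction criterion}). Since $\ell$ is prime, $q \equiv -1 \pmod{\ell}$. This mirrors the first half of the proof of \Cref{lem: omega factors through unramified ext} but does not use unramifiedness.
\item Finally, \eqref{4 congruences lemma congruence 4} at $h = 1$ together with the determinant identity gives $\chi(\sigma) \equiv -(b_1/a_1)\xi(\sigma^2) \equiv -a_1/b_1 \pmod{\lambda}$.
\end{enumerate}

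For ($\Rightarrow$), assume $\overline{\chi}|_{W_E} = \overline{\xi}$, $q \equiv -1 \pmod{\ell}$ and $\chi(\sigma) \equiv -a_1/b_1 \pmod{\lambda}$.
\begin{enumerate}[(a)]
\item Congruence \eqref{4 congruences lemma congruence 2} is immediate from $\overline{\chi}|_{W_E} = \overline{\xi}$ and $\overline{\xi} = \overline{\xi}^\sigma$.
\item Congruence \eqref{4 congruences lemma congruence 1} additionally needs $\overline{\omega}|_{W_E} = \overline{1}$.
\item For \eqref{4 congruences lemma congruence 4}, write $\chi(h\sigma) = \chi(h)\chi(\sigma)$. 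Then use $\xi(\sigma h \sigma) = \xi^\sigma(h)\xi(\sigma^2)$, which holds because $\xi$ is a character of $W_E$ and $\sigma^2 \in W_E$, together with the determinant identity to rewrite $-a_1/b_1$ as $-(b_1/a_1)\xi(\sigma^2)$.
\item Congruence \eqref{4 congruences lemma congruence 3} then follows from \eqref{4 congruences lemma congruence 4} once we know $\overline{\omega}(h\sigma) = -1$, that is, once $\ker\overline{\omega} = W_E$.
\end{enumerate}

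The main obstacle is this last point. When $q \equiv -1 \pmod{\ell}$, the kernel of $\overline{\omega}$ is the Weil group of the unramified quadratic extension of $F$. By \Cref{lem: omega factors through unramified ext}, it equals $W_E$ precisely when $E/F$ is unramified.

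So the ($\Rightarrow$) direction needs $E/F$ unramified, and I would state that hypothesis explicitly there; I do not see how to remove it. In the ramified case $W_E$ contains elements $\Phi\tau$ with $\overline{\omega}(\Phi\tau) = -1$, so \eqref{4 congruences lemma congruence 1} and \eqref{4 congruences lemma congruence 2} cannot both hold. Hence in that case the four congruences never hold, and only ($\Leftarrow$) survives. This is consistent with \Cref{supercuspidal vs special: ramified case}, which already excludes the ramified case from the comparison.
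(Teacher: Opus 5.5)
Your proof is correct and follows essentially the same route as the paper's. Indecomposability gives $\overline{\xi}=\overline{\xi}^\sigma$, the unramified lemma gives $\ker\overline{\omega}=W_E$, and the four congruences (which say $\overline{\chi\omega}=\overline{\theta}_1$, $\overline{\chi}=\overline{\theta}_2$) are then checked directly; the only cosmetic difference is that you pass between $\chi(\sigma)$ and $-(b_1/a_1)\xi(\sigma^2)$ via the determinant identity $(b_1/a_1)^2\xi(\sigma^2)\equiv 1 \pmod{\lambda}$, whereas the paper writes $\chi(h\sigma)=\chi(\sigma^{-1})\chi(\sigma h\sigma)$. You are also right that the forward direction needs $E/F$ unramified: the paper uses this silently through \Cref{lem: omega factors through unramified ext}, relying on the lemma being placed in the unramified discussion.
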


\begin{proof}
    For the forward implication, note that $\overline{\xi} = \overline{\xi}^\sigma$ by \Cref{thm: decomposability criterion for nonexceptional supercuspidal representations} since $\overline{\rho}_1$ is indecomposable. Therefore, $\overline{\chi}|_{W_E} = \overline{\xi}$ implies \eqref{4 congruences lemma congruence 2}.

    We have
    \begin{equation}
    \label{4 congruences lemma factored omega}
        \overline{\omega}(g) = \begin{cases}
            1, & g \in W_E \\
            -1, & g \not\in W_E
        \end{cases}
    \end{equation}
    by \Cref{lem: omega factors through unramified ext} since $q \equiv -1 \pmod{\ell}$, so we also get \eqref{4 congruences lemma congruence 1}.

    Observe that
    \begin{equation*}
        -\frac{b_1}{a_1}\xi(\sigma h \sigma) \equiv -\chi(\sigma^{-1})\xi(\sigma h \sigma) \equiv -\chi(\sigma^{-1})\chi(\sigma h \sigma) \equiv -\chi(h\sigma) \pmod{\lambda}.
    \end{equation*}
    Lastly, \eqref{4 congruences lemma congruence 3} follows from \eqref{4 congruences lemma congruence 4} and \eqref{4 congruences lemma factored omega}.

    The converse implication follows from similar arguments.
\end{proof}

\begin{lemma}
\label{lem: tame character crossed homomorphism}
    Fix a Frobenius element $\Phi$ and extend $t_\ell$ to $W_F$ by setting $t_\ell(\Phi^k\tau) := t_\ell(\tau) $ for all $\tau \in I_F, k \in \Z$. Then
    $t_\ell(g_1g_2) = \omega(g_2^{-1})t_\ell(g_1) + t_\ell(g_2)$ for all $g_1, g_2 
    \in W_F$.
\end{lemma}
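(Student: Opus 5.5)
The plan is to reduce the identity to a single conjugation formula for $t_\ell$ on inertia:
\begin{equation*}
    t_\ell(g\tau g^{-1}) = \omega(g)\,t_\ell(\tau) \qquad \text{for all } g \in W_F,\ \tau \in I_F.
\end{equation*}
Once this formula is available, the lemma is a direct computation.

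First I would establish the conjugation formula. The homomorphism $t_\ell$ is continuous into the pro-$\ell$ group $\Z_\ell$, so it kills the pro-$p$ group $P_F$ and factors through the tame quotient $I_F/P_F \cong \prod_{r \neq p}\Z_r(1)$. From there it factors through the projection to $\Z_\ell(1) = \varprojlim \mu_{\ell^n}$. Any two continuous surjections $I_F \to \Z_\ell$ then differ by a unit of $\Z_\ell$, and the formula is invariant under such scaling. It therefore suffices to verify the formula for the canonical map $\tau \mapsto (\tau(\varpi^{1/\ell^n})/\varpi^{1/\ell^n})_n$, after fixing an identification $\Z_\ell(1)\cong\Z_\ell$.

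For this canonical map, conjugation by a Weil element $g$ acts on $\Z_\ell(1)$ through the cyclotomic character. For an arithmetic Frobenius this is multiplication by $q$, and for a geometric Frobenius it is multiplication by $q^{-1}$. The convention $\omega(\Phi) = q^{-1}$, where $\omega$ corresponds to the absolute value under class field theory, means that $\Phi$ is a geometric Frobenius. Hence $t_\ell(\Phi\tau\Phi^{-1}) = q^{-1}t_\ell(\tau) = \omega(\Phi)t_\ell(\tau)$. Since conjugation by an element of $I_F$ acts trivially on the abelian quotient $I_F/P_F$, the formula extends to all $g \in W_F$. I expect this normalization bookkeeping, namely matching the direction of conjugation with $\omega(\Phi)=q^{-1}$ rather than $q$, to be the main obstacle; everything else is formal.

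Next comes the computation. I would write $g_1 = \Phi^{a}\tau_1$ and $g_2 = \Phi^{b}\tau_2$ with $\tau_1,\tau_2\in I_F$. Then
\begin{equation*}
    g_1g_2 = \Phi^{a+b}\,(\Phi^{-b}\tau_1\Phi^{b})\,\tau_2 ,
\end{equation*}
and the factor $(\Phi^{-b}\tau_1\Phi^{b})\tau_2$ lies in $I_F$. By the definition of the extension of $t_\ell$, the fact that $t_\ell$ is a homomorphism on $I_F$, and the conjugation formula with $g=\Phi^{-b}$, we get
\begin{equation*}
    t_\ell(g_1g_2) = \omega(\Phi^{-b})\,t_\ell(\tau_1) + t_\ell(\tau_2) = q^{b}\,t_\ell(g_1) + t_\ell(g_2).
\end{equation*}
Finally, $\omega(g_2^{-1}) = \omega(\tau_2^{-1}\Phi^{-b}) = q^{b}$, because $\omega$ is trivial on $I_F$. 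This gives exactly $t_\ell(g_1g_2) = \omega(g_2^{-1})t_\ell(g_1) + t_\ell(g_2)$.
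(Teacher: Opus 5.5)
Your proposal is correct and matches the paper's proof: it uses the same decomposition $g_1g_2=\Phi^{k_1+k_2}(\Phi^{-k_2}\tau_1\Phi^{k_2})\tau_2$, the additivity of $t_\ell$ on $I_F$, and the conjugation formula $t_\ell(g\tau g^{-1})=\omega(g)t_\ell(\tau)$. The only difference is that the paper cites this formula from Bushnell--Henniart (32.5.1), while you sketch its derivation from the Kummer description of the $\ell$-part of tame inertia, correctly noting that it requires $\Phi$ to be a geometric Frobenius, as the normalization $\omega(\Phi)=q^{-1}$ forces.
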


\begin{proof}
    Put $g_1 = \Phi^{k_1}\tau_{1}$ and $g_2 = \Phi^{k_2}\tau_{2}$, where $k_{1}, k_{2} \in \Z$ and $\tau_{1}, \tau_{2} \in I_F$. We can write
    \begin{equation*}
        g_1g_2 = \Phi^{k_1+k_2}(\Phi^{-k_2}\tau_1\Phi^{k_2})\tau_2
    \end{equation*}
    with $(\Phi^{-k_2}\tau_1\Phi^{k_2})\tau_2 \in I_F$, so
    \begin{equation*}
        t_\ell(g_1g_2) = t_\ell((\Phi^{-k_2}\tau_1\Phi^{k_2})\tau_2) = t_\ell(\Phi^{-k_2}\tau_1\Phi^{k_2}) + t_\ell(\tau_2)
    \end{equation*}
    since $\Phi^{-k_2}\tau_1\Phi^{k_2}$ and $\tau_2$ are both in $I_F$. By \cite[32.5.1]{Bushnell-Henniart}, $t_\ell$ satisfies
    \begin{equation*}
        t_\ell(g\tau g^{-1}) = \omega(g)t_\ell(\tau)
    \end{equation*}
    for all $g \in W_F, \tau \in I_F$, so we have
    \begin{equation*}
        t_\ell(\Phi^{-k_2}\tau_1\Phi^{k_2}) = \omega(\Phi^{-k_2})t_\ell(\tau_1).
    \end{equation*}
    Note that $t_\ell(\tau_2) = t_\ell(g_2), t_\ell(\tau_1) = t_\ell(g_1)$, and $\omega(\Phi^{-k_2}) = \omega(g_2)^{-1}$. Hence, we obtain the desired identity.
\end{proof}

\begin{proposition}
\label{thm: special vs supercuspidal comparison unramified case}
    With the same setup as in \Cref{supercuspidal vs special: ramified case}, suppose now that $E/F$ is unramified. Then $\overline{\rho}_1 \cong \overline{\rho}_2$ if and only if $\overline{\rho}_2$ is indecomposable, $q \equiv -1 \pmod{\ell}$, $\overline{\chi}|_{W_E} = \overline{\xi}, \chi(\sigma) \equiv -\frac{a_1}{b_1} \pmod{\lambda}$, and there exists $x \in \calO_K^\times$ such that
    \begin{equation}
        \frac{a_2x}{d_2}t_\ell(h)\chi(h) \equiv \frac{b_1}{d_1}(\xi^\sigma(h) - \xi(h)) \pmod{\lambda}
    \end{equation}
    for all $h \in W_E$, where it is understood that $t_\ell(h)$ denotes the image of the inertial part of $h$ under $t_\ell$ after fixing a Frobenius element $\Phi$.
\end{proposition}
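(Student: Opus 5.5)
Since $a_2/d_2 \in \calO_K^\times$, \Cref{thm:decomposability criterion for reductions of special representations} gives that $\overline{\rho}_2$ is reducible and indecomposable. So $\overline{\rho}_1 \cong \overline{\rho}_2$ forces $\overline{\rho}_1$ to be reducible and indecomposable. By \Cref{thm: decomposability criterion for nonexceptional supercuspidal representations} this means $\overline{\xi} = \overline{\xi}^\sigma$. It also rules out case (i) of that theorem, so $d_1/a_1 \in \lambda$, and we are in case (ii) with $v(b_1/d_1) = -i$. Then \Cref{lem: d/a in lambda implies unit b/a for supercuspidal reductions} gives $b_1/a_1 \in \calO_K^\times$. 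We may therefore write $\overline{\rho}_1$ in the upper triangular form with diagonal $\overline{\theta}_1, \overline{\theta}_2$ and off-diagonal $\overline{\psi}$ displayed before \Cref{thm: decomposability criterion for nonexceptional supercuspidal representations for odd ell}. We then apply \Cref{thm: isomorphism of indecomposable representations} to this form and the given form of $\rho_2$. It says $\overline{\rho}_1 \cong \overline{\rho}_2$ if and only if $\overline{\theta}_1 = \overline{\chi\omega}$, $\overline{\theta}_2 = \overline{\chi}$, and $x\overline{\psi}_2 + y(\overline{\chi} - \overline{\chi\omega}) = \overline{\psi}$ for some $x \in k_K^\times$ and $y \in k_K$. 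Here $\psi_2$ is the upper right entry of $\rho_2$. (One could equally put $x$ on $\overline\psi$; the roles are symmetric up to inverting $x$.)

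The diagonal conditions $\overline{\theta}_1 = \overline{\chi\omega}$ and $\overline{\theta}_2 = \overline{\chi}$ are exactly the four congruences \eqref{4 congruences lemma congruence 1}--\eqref{4 congruences lemma congruence 4}. By \Cref{lem: special vs supercuspidal 4 congruences} they are equivalent to $\overline{\chi}|_{W_E} = \overline{\xi}$, $q \equiv -1 \pmod{\ell}$ and $\chi(\sigma) \equiv -a_1/b_1 \pmod{\lambda}$. For the converse in that lemma I would check directly that the congruences force $\overline{\omega}$ to be $-1$ on $W_F\setminus W_E$. Since $E/F$ is unramified, \Cref{lem: omega factors through unramified ext} then gives $q\equiv -1$.

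The main work is the off-diagonal condition. Under these conditions $\overline{\omega}$ is $1$ on $W_E$ and $-1$ off $W_E$, so $\overline{\chi} - \overline{\chi\omega}$ vanishes on $W_E$ and equals $2\overline{\chi}(h\sigma)$ on $h\sigma$. On $h \in W_E$ the $y$-term disappears. There $\overline{\psi}_2(h) = \frac{a_2}{d_2}t_\ell(h)\chi(h)$, since the $b_2$-term vanishes, while $\overline{\psi}(h) = \frac{b_1}{d_1}(\xi^\sigma(h) - \xi(h))$. Restricted to $W_E$, the condition is therefore precisely the displayed congruence. It remains to show that the condition on $W_E$ automatically determines it on $W_E\sigma$, by choosing $y$ suitably. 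Both $\overline{\psi}$ and $\overline{\psi}_2$ are cocycles for the same pair of diagonal characters; they are the off-diagonal entries of homomorphisms into the upper triangular group. So $\Delta := x\overline{\psi}_2 - \overline{\psi}$ satisfies $\Delta(g_1g_2) = \overline{\theta}_1(g_1)\Delta(g_2) + \Delta(g_1)\overline{\theta}_2(g_2)$. If $\Delta$ vanishes on $W_E$, then $\Delta(h\sigma) = \Delta(\sigma)\overline{\chi}(h)$ for $h\in W_E$. Setting $y := -\Delta(\sigma)/(2\overline{\chi}(\sigma))$, which is legitimate since $\ell$ is odd, gives $y(\overline{\chi} - \overline{\chi\omega})(h\sigma) = 2y\overline{\chi}(h\sigma) = -\Delta(h\sigma)$. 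This cancels $\Delta$ off $W_E$ too. Conversely, the $W_E$-condition is necessary because the $y$-term vanishes there.

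The step I expect to be most delicate is using only the cocycle property of $\overline\psi_2$ and $\overline\psi$, rather than expanding $t_\ell$ explicitly via \Cref{lem: tame character crossed homomorphism}. If I do compute explicitly as a check, that lemma handles $t_\ell(h\sigma)$, with the sign of $\omega(\sigma)^{-1}\equiv -1$ mattering. Finally, the hypothesis ``$\overline{\rho}_2$ indecomposable'' in the statement is automatic from $a_2/d_2\in\calO_K^\times$. It is included only to signal that the theorem applies, so both directions close.
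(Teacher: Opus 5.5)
Your argument is correct apart from the two repairs noted in the second paragraph, and it follows the paper's skeleton: indecomposability of $\overline{\rho}_2$ forces $\overline{\xi}=\overline{\xi}^\sigma$ and the upper-triangular form of $\overline{\rho}_1$; the isomorphism criterion for indecomposables splits the problem into diagonal and off-diagonal conditions; the four-congruence lemma handles the diagonals; and on $W_E$ the $y$-term dies, leaving exactly the displayed congruence.

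You genuinely differ in showing that the condition on $W_E$ forces the one on $W_E\sigma$. The paper proves this by explicit computation in \Cref{lem: special vs unramified supercuspidal eqn 1 implies eqn 2}. It re-chooses $\sigma=\Phi$ and solves for $y$ at $h=1$. It then checks all $h$ using the crossed-homomorphism identity of \Cref{lem: tame character crossed homomorphism}, the integrality relation $(a_1/b_1)^2\equiv\xi(\Phi^2)$, and $\chi(\Phi)\equiv -a_1/b_1$. You instead observe that $\Delta=x\overline{\psi}_2-\overline{\psi}$ is a cocycle for the common diagonal pair $(\overline{\chi\omega},\overline{\chi})$. So $\Delta|_{W_E}=0$ gives $\Delta(h\sigma)=\overline{\chi}(h)\Delta(\sigma)$, and $y=-\Delta(\sigma)/(2\overline{\chi}(\sigma))$ kills it. This is the vanishing of $H^1$ of $W_F/W_E\cong\Z/2$ with coefficients in the sign character $\overline{\omega}$ in odd characteristic. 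Your route is shorter and needs no explicit formulas for $\overline{\psi}$ or $t_\ell$ on $W_E\sigma$. It also works with the originally fixed $\sigma$. The paper's computation buys an explicit formula for $y$.

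Two points need tightening.

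First, dropping the $b_2$-term on $W_E$ requires $b_2/d_2\in\calO_K$, which you do not justify. As in the paper, it follows from integrality of the $(1,2)$-entry $\frac{b_2}{d_2}\chi(\Phi)(1-q^{-1})$ at $\Phi$, since $1-q^{-1}\equiv 2$ is a unit.

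Second, in the converse you obtained $d_1/a_1\in\lambda$ and indecomposability of $\overline{\rho}_1$ only from $\overline{\rho}_1\cong\overline{\rho}_2$, which is not available there. So your claim that ``both directions close'' is not yet justified; the paper tacitly assumes the same thing. The repair uses only the listed conditions:
\begin{enumerate}[(i)]
\item $\overline{\chi}|_{W_E}=\overline{\xi}$ gives $\overline{\xi}=\overline{\xi}^\sigma$, so $i\geq 1$.
\item Evaluate the displayed congruence at $\tau_0\in I_F\subset W_E$ with $t_\ell(\tau_0)\in\Z_\ell^\times$. The left side is a unit, so $v(b_1/d_1)\le -i<0$.
\item If $d_1/a_1$ were a unit, integrality of $\rho_1(\sigma)$, whose $(1,1)$-entry is $\frac{b_1}{a_1}\xi(\sigma^2)$, would force $b_1/d_1\in\calO_K$, a contradiction. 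Hence $d_1/a_1\in\lambda$.
\item Since $v(b_1/d_1)\le -i<1-i$, $\overline{\rho}_1$ is indecomposable by \Cref{thm: decomposability criterion for nonexceptional supercuspidal representations for odd ell}.
\end{enumerate}
Alternatively, once $d_1/a_1\in\lambda$, the ``if'' half of the criterion needs no indecomposability: conjugating by $\begin{pmatrix}x&y\\0&1\end{pmatrix}$ realizes the isomorphism directly.
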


\begin{proof}
    Since $a_2/d_2 \in \calO_K^\times$ by assumption, $\overline{\rho}_2$ is indecomposable by \Cref{thm:decomposability criterion for reductions of special representations}, so we assume that $\overline{\rho}_1$ is also indecomposable. By \Cref{thm: decomposability criterion for nonexceptional supercuspidal representations}, this implies that $\overline{\xi} = \overline{\xi}^\sigma$.

    We can write $\overline{\rho}_1$ and $\overline{\rho}_2$ as
    \begin{equation*}
        \overline{\rho_1}(g) = \begin{pmatrix}
            \overline{\theta}_1(g) & \overline{\psi}_1(g) \\
            0 & \overline{\theta}_2(g)
        \end{pmatrix} , \overline{\rho}_2(g) = \begin{pmatrix}
            \overline{\chi\omega}(g) & \overline{\psi}_2(g) \\
            0 & \overline{\chi}(g)
        \end{pmatrix},
    \end{equation*}
    where
    \begin{alignat*}{2}
        \theta_1(g) &= \begin{cases} \xi(h), \\ \frac{b_1}{a_1}\xi(\sigma h \sigma), \end{cases} 
        &\quad& \begin{aligned} & g=h \in W_E \\ & g = h \sigma, h \in W_E \end{aligned} \\[1.5ex]
        \theta_2(g) &= \begin{cases} \xi^\sigma(h), \\ -\frac{b_1}{a_1}\xi(\sigma h \sigma), \end{cases} 
        &\quad& \begin{aligned} & g=h \in W_E \\ & g = h \sigma, h \in W_E \end{aligned} \\[1.5ex]
        \psi_1(g) &= \begin{cases} \frac{b_1}{d_1}\left(\xi^\sigma(h) - \xi(h)\right), \\ \frac{a_1}{d_1}\xi(h) - \frac{b_1^2}{a_1d_1}\xi(\sigma h \sigma), \end{cases}
        &\quad& \begin{aligned} & g=h \in W_E \\ & g=h\sigma, h \in W_E \end{aligned}
    \end{alignat*}
    and
    \begin{equation*}
       \psi_2(g) = \frac{a_2}{d_2}t_\ell(g)\chi\omega(g) + \frac{b_2}{d_2}(\chi(g) - \chi\omega(g))
    \end{equation*}
    for all $g \in W_F$.

    Applying \Cref{thm: isomorphism of indecomposable representations} to $\overline{\rho}_1$ and $\overline{\rho}_2$, we see that $\overline{\rho}_1 \cong \overline{\rho}_2$ if and only if $\overline{\theta_1} = \overline{\chi\omega}, \overline{\theta}_2 = \overline{\chi}$, and there exist $x \in \calO_K^\times, y \in \calO_K$ such that
    \begin{equation}
    \label{special vs unramified supercuspidal: congruence 1}
        x\psi_2 + y(\chi-\chi\omega) \equiv \psi_1 \pmod{\lambda}.
    \end{equation}
    Writing out the equations $\overline{\theta_1} = \overline{\chi\omega}$ and $\overline{\theta}_2 = \overline{\chi}$ explicitly, we get
    \begin{align*}
        \chi\omega(h) &\equiv \xi(h) \pmod{\lambda} \\
        \chi\omega(h\sigma) &\equiv \frac{b_1}{a_1}\xi(\sigma h \sigma) \pmod{\lambda} \\
        \chi(h) &\equiv \xi^\sigma(h) \pmod{\lambda} \\
        \chi(h\sigma) &\equiv -\frac{b_1}{a_1}\xi(\sigma h \sigma) \pmod{\lambda}
    \end{align*}
    for all $h \in W_E$. By \Cref{lem: special vs supercuspidal 4 congruences}, these congruences are equivalent to $\overline{\chi}|_{W_E} = \overline{\xi}, q \equiv -1 \pmod{\ell}$, and $\chi(\sigma) = -\frac{a_1}{b_1} \pmod{\lambda}$. Therefore, $\overline{\rho}_1 \cong \overline{\rho}_2$ if and only if $\overline{\chi}|_{W_E} = \overline{\xi}, q \equiv -1 \pmod{\ell}, \chi(\sigma) = -\frac{a_1}{b_1} \pmod{\lambda}$, and there exist $x \in \calO_K^\times, y \in \calO_K$ such that \eqref{special vs unramified supercuspidal: congruence 1} holds.

    For $g=h \in W_E$, \eqref{special vs unramified supercuspidal: congruence 1} becomes
    \begin{equation}
    \label{special vs unramified supercuspidal: congruence 2}
        x\left[\frac{a_2}{d_2}t_\ell(h)\chi\omega(h) + \frac{b_2}{d_2}(\chi(h) - \chi\omega(h)) \right] + y(\chi(h) - \chi\omega(h)) \equiv \frac{b_1}{d_1}(\xi^\sigma(h) - \xi(h)) \pmod{\lambda}.
    \end{equation}
    Likewise, \eqref{special vs unramified supercuspidal: congruence 1} can be written explicitly as
    \begin{equation}
    \label{special vs unramified supercuspidal: congruence 3}
        x\left(\frac{a_2}{d_2}t_\ell(h\sigma) \chi\omega(h\sigma) + \frac{b_2}{d_2}(\chi(h\sigma) - \chi\omega(h\sigma) \right)+y(\chi(h\sigma)-\chi\omega(h\sigma)) \equiv \frac{a_1}{d_1}\xi(h) - \frac{b_1^2}{a_1d_1}\xi(\sigma h \sigma) \pmod{\lambda}
    \end{equation}
    for all $g=h\sigma$ with $h \in W_E$.
    
    We assume that \eqref{special vs unramified supercuspidal: congruence 2} implies \eqref{special vs unramified supercuspidal: congruence 3} under the assumptions $\overline{\chi}|_{W_E} = \overline{\xi}, q \equiv -1 \pmod{\ell}$, and $\chi(\sigma) = -\frac{a_1}{b_1} \pmod{\lambda}$, which we prove in \Cref{lem: special vs unramified supercuspidal eqn 1 implies eqn 2}. Therefore, $\overline{\rho}_1 \cong \overline{\rho}_2$ if and only if $\overline{\chi}|_{W_E} = \overline{\xi}, q \equiv -1 \pmod{\ell}, \chi(\sigma) = -\frac{a_1}{b_1} \pmod{\lambda}$, and there exist $x \in \calO_K^\times, y \in \calO_K$ such that \eqref{special vs unramified supercuspidal: congruence 2} holds.

    By \Cref{lem: omega factors through unramified ext}, $q \equiv -1 \pmod{\ell}$ is equivalent to $\overline{\omega}$ factoring through $W_E$, so \eqref{special vs unramified supercuspidal: congruence 2} simplifies to
    \begin{equation}
    \label{special vs unramified supercuspidal: congruence 4}
        x\left[\frac{a_2}{d_2}t_\ell(h)\chi(h) + \frac{b_2}{d_2}(\chi(h) - \chi\omega(h)) \right] \equiv \frac{b_1}{d_1}(\xi^\sigma(h) - \xi(h)) \pmod{\lambda}.
    \end{equation}

    Note that $\psi_2(g) \in \calO_K$ for all $g \in W_F$. Plugging in $g = \Phi$, we find that
    \begin{equation*}
        \frac{b_2}{d_2}\chi(\Phi)(1-q^{-1}) \in \calO_K.
    \end{equation*}
    Since $\chi(\Phi), 1-q^{-1} \in \calO_K^\times$, it follows that $b_2/d_2 \in \calO_K$. Thus, the $\frac{b_2}{d_2}(\chi(h) - \chi\omega(h))$ term in \eqref{special vs unramified supercuspidal: congruence 4} also vanishes, leaving us with
    \begin{equation*}
        \frac{a_2x}{d_2}t_\ell(h)\chi(h) \equiv \frac{b_1}{d_1}(\xi^\sigma(h) - \xi(h)) \pmod{\lambda}.
    \end{equation*}
    This concludes the proof.
\end{proof}

\begin{lemma}
\label{lem: special vs unramified supercuspidal eqn 1 implies eqn 2}
    With the same setup as in \Cref{supercuspidal vs special: ramified case}, if $\overline{\chi}|_{W_E} = \overline{\xi}, q \equiv -1 \pmod{\ell}, \chi(\sigma) = -\frac{a_1}{b_1} \pmod{\lambda}$, and there exists $x \in \calO_K^\times$ such that 
    \begin{equation}
    \label{special vs unramified supercuspidal eqn 1 implies eqn 2: congruence 1}
        \frac{a_2x}{d_2}t_\ell(h)\chi(h) \equiv \frac{b_1}{d_1}(\xi^\sigma(h) - \xi(h)) \pmod{\lambda},
    \end{equation}
    for all $h \in W_E$, then there exists $y \in \calO_K$ such that
    \begin{equation}
    \label{special vs unramified supercuspidal eqn 1 implies eqn 2: congruence 2}
        x\left(\frac{a_2}{d_2}t_\ell(h\sigma) \chi\omega(h\sigma) + \frac{b_2}{d_2}(\chi(h\sigma) - \chi\omega(h\sigma)) \right)+y(\chi(h\sigma)-\chi\omega(h\sigma)) \equiv \frac{a_1}{d_1}\xi(h) - \frac{b_1^2}{a_1d_1}\xi(\sigma h \sigma) \pmod{\lambda}
    \end{equation}
    for all $h \in W_E$.
\end{lemma}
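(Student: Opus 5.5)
The plan is to reduce \eqref{special vs unramified supercuspidal eqn 1 implies eqn 2: congruence 2} to a congruence of the shape $C\,\overline{\chi}(h\sigma) \equiv 0$ with $C$ independent of $h$. The key observation is that, under the hypotheses, $\chi(h\sigma)-\chi\omega(h\sigma) \equiv 2\chi(h\sigma) \pmod{\lambda}$. Indeed, by \Cref{lem: omega factors through unramified ext} we have $\overline{\omega}(h\sigma) = -1$, and $2\chi(h\sigma)$ is a unit because $\ell$ is odd. So the term $y(\chi(h\sigma)-\chi\omega(h\sigma))$ can absorb any discrepancy that is a constant multiple of $\chi(h\sigma)$. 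We will therefore rewrite every term of \eqref{special vs unramified supercuspidal eqn 1 implies eqn 2: congruence 2} as either a multiple of $t_\ell(h)\chi(h)$ or a constant times $\chi(h)\chi(\sigma)$. We then check that the $t_\ell(h)$-parts cancel by hypothesis \eqref{special vs unramified supercuspidal eqn 1 implies eqn 2: congruence 1}, and solve for $y$.

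First, rewrite the right-hand side exactly, before reducing mod $\lambda$, to avoid dividing by possibly non-integral quantities such as $a_1/d_1$. Using $\xi(\sigma h\sigma) = \xi^\sigma(h)\xi(\sigma^2)$, one gets
\begin{equation*}
\frac{a_1}{d_1}\xi(h) - \frac{b_1^2}{a_1d_1}\xi(\sigma h\sigma) = -\frac{a_1}{b_1}\cdot\frac{b_1}{d_1}\bigl(\xi^\sigma(h)-\xi(h)\bigr) + \xi^\sigma(h)\,\psi_1(\sigma),
\end{equation*}
where $\psi_1(\sigma) = \frac{a_1^2-b_1^2\xi(\sigma^2)}{a_1d_1} \in \calO_K$ is the entry of $\rho_1(\sigma)$ computed in \Cref{thm: special vs supercuspidal comparison unramified case}. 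Since $\overline{\rho}_1$ is indecomposable, \Cref{lem: d/a in lambda implies unit b/a for supercuspidal reductions} applies (we are in the case $d_1/a_1 \in \lambda$) and gives $b_1/a_1 \in \calO_K^\times$. Hypothesis \eqref{special vs unramified supercuspidal eqn 1 implies eqn 2: congruence 1} then turns the first term into $-\frac{a_1}{b_1}\frac{a_2x}{d_2}t_\ell(h)\chi(h)$ mod $\lambda$. In the second term, $\overline{\xi}^\sigma(h) = \overline{\chi}(h)$.

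Next, expand the left-hand side. By \Cref{lem: tame character crossed homomorphism}, $t_\ell(h\sigma) = \omega(\sigma)^{-1}t_\ell(h) + t_\ell(\sigma)$. We also have $\overline{\omega}(\sigma) = -1$, and $\chi\omega(h\sigma) \equiv -\chi(h)\chi(\sigma) \equiv \frac{a_1}{b_1}\chi(h) \pmod{\lambda}$ by the hypothesis on $\chi(\sigma)$. Together these give
\begin{equation*}
x\frac{a_2}{d_2}t_\ell(h\sigma)\chi\omega(h\sigma) \equiv -\frac{a_1}{b_1}\frac{a_2x}{d_2}t_\ell(h)\chi(h) + \frac{a_1}{b_1}\frac{a_2x}{d_2}t_\ell(\sigma)\chi(h) \pmod{\lambda}.
\end{equation*}
The $t_\ell(h)$-terms on both sides now coincide, and the statement reduces to finding $y$ with
\begin{equation*}
\Bigl(\frac{a_1}{b_1}\frac{a_2x}{d_2}t_\ell(\sigma) - \psi_1(\sigma)\Bigr)\chi(h) + 2\Bigl(x\frac{b_2}{d_2}+y\Bigr)\chi(h)\chi(\sigma) \equiv 0 \pmod{\lambda}.
\end{equation*}
Here $b_2/d_2 \in \calO_K$ was shown in the proof of \Cref{thm: special vs supercuspidal comparison unramified case}, and $\chi(\sigma)$ is a unit. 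Dividing by $\chi(h)$ and solving, we take
\begin{equation*}
y = -x\frac{b_2}{d_2} - \frac{1}{2\chi(\sigma)}\Bigl(\frac{a_1}{b_1}\frac{a_2x}{d_2}t_\ell(\sigma) - \psi_1(\sigma)\Bigr) \in \calO_K,
\end{equation*}
which is independent of $h$.

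The main obstacle is the bookkeeping in the right-hand-side rewriting. The individual pieces $a_1/d_1$ and $b_1^2/(a_1d_1)$ need not be integral, so the regrouping into the integral quantities $\frac{b_1}{d_1}(\xi^\sigma-\xi)$ (which equals $\psi_1$ on $W_E$) and $\psi_1(\sigma)$ must be done exactly, with reduction mod $\lambda$ only afterwards. One must also keep straight which convention for $\xi^\sigma$ is used, so that $\xi(\sigma h\sigma) = \xi^\sigma(h)\xi(\sigma^2)$ holds as written. Finally, choosing $y$ does not disturb the congruence on $W_E$ already established in \Cref{thm: special vs supercuspidal comparison unramified case}, because $\chi-\chi\omega$ vanishes mod $\lambda$ on $W_E$.
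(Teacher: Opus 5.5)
Your argument is correct, and it differs from the paper's in a few ways worth recording.

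\textbf{The paper's route.} The paper first specializes $\sigma=\Phi$, using that $E/F$ is unramified, so that $t_\ell(\sigma)=0$. It fixes $y$ by evaluating the target congruence at $h=1$. It then handles general $h$ by subtracting $\chi(h)$ times the $h=1$ instance. To get back to the hypothesis on $W_E$, it needs the auxiliary congruence $(a_1/b_1)^2\equiv\xi(\Phi^2) \pmod{\lambda}$, which it extracts from integrality of $\psi_1(\Phi)$, together with $\chi(\Phi)\equiv -a_1/b_1$.

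\textbf{Your route.} You keep the coset representative $\sigma$ as given and absorb the extra term $\frac{a_1}{b_1}\frac{a_2x}{d_2}t_\ell(\sigma)\chi(h)$ into $y$. You also rewrite the right-hand side exactly as $-\frac{a_1}{b_1}\psi_1(h)+\xi^\sigma(h)\psi_1(\sigma)$. With that, the hypothesis cancels the $t_\ell(h)$-terms directly, you write down an $h$-independent $y$, and the congruence $(a_1/b_1)^2\equiv\xi(\sigma^2)$ is never used.

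\textbf{What your version gains.} First, it works for the fixed $\sigma$ in the statement. The matrix model of $\Ind_{W_E}^{W_F}\xi$, and hence $U_1$ and the hypothesis $\chi(\sigma)\equiv -a_1/b_1$, depends on the choice of coset representative, so the paper's choice $\sigma=\Phi$ is a tacit normalization.

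Second, you keep every quantity integral before reducing mod $\lambda$. The paper's displayed chain replaces $\chi(h)$ by $\xi(h)$ inside terms with coefficients $a_1/d_1$ and $b_1^2/(a_1d_1)$. These have negative valuation because $d_1/a_1\in\lambda$ and $b_1/a_1$ is a unit, so neither replacement is valid on its own. The two errors do cancel: their combined effect is $\pm\psi_1(\Phi)\bigl(\chi(h)-\xi(h)\bigr)\in\lambda$. Your exact regrouping is the rigorous form of that step.

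The paper's route gains only some concreteness: $t_\ell(\Phi)=0$, and $y$ is read off from $h=1$.

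\textbf{A small simplification.} You do not need indecomposability of $\overline{\rho}_1$, or the lemma on $b/a$, to get $a_1/b_1\in\calO_K^\times$. This already follows from the hypothesis $\chi(\sigma)\equiv -a_1/b_1 \pmod{\lambda}$, since $\chi(\sigma)$ is a unit. That is the only place your argument uses integrality of $a_1/b_1$.
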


\begin{proof}
    Plugging $h=1$ into \eqref{special vs unramified supercuspidal eqn 1 implies eqn 2: congruence 2}, we get
    \begin{equation}
    \label{special vs unramified supercuspidal eqn 1 implies eqn 2: congruence 3}
        x\left(\frac{a_2}{d_2}t_\ell(\sigma) \chi\omega(\sigma) + \frac{b_2}{d_2}(\chi(\sigma) - \chi\omega(\sigma) \right)+y(\chi(\sigma)-\chi\omega(\sigma)) \equiv \frac{a_1}{d_1} - \frac{b_1^2}{a_1d_1}\xi(\sigma^2) \pmod{\lambda}.
    \end{equation}
    Since $E/F$ is unramified, we can pick $\sigma = \Phi$ so that $t_\ell(\Phi) = 0$ and $\omega(\Phi) = q^{-1} \equiv -1 \pmod{\lambda}$. Then \eqref{special vs unramified supercuspidal eqn 1 implies eqn 2: congruence 3} becomes
    \begin{equation}
    \label{special vs unramified supercuspidal eqn 1 implies eqn 2: congruence 4}
        \frac{2b_2x}{d_2}\chi(\Phi)+2y\chi(\Phi) \equiv \frac{a_1}{d_1} - \frac{b_1^2}{a_1d_1}\xi(\Phi^2) \pmod{\lambda}.
    \end{equation}
    Observe that \eqref{special vs unramified supercuspidal eqn 1 implies eqn 2: congruence 4} can be solved for $y \in \calO_K$. We claim that for this choice of $y$, \eqref{special vs unramified supercuspidal eqn 1 implies eqn 2: congruence 2}  holds for all $h \in W_E$.

    By \Cref{lem: tame character crossed homomorphism}, we have
    \begin{equation*}
        t_\ell(h\Phi) = t_\ell(h\Phi) = \omega(\Phi^{-1})t_\ell(h) + t_\ell(\Phi) = qt_\ell(h) \equiv -t_\ell(h) \pmod{\lambda},
    \end{equation*}
    where the last congruence follows from $q \equiv -1 \pmod{\ell}$. Therefore, \eqref{special vs unramified supercuspidal eqn 1 implies eqn 2: congruence 2} simplifies to
    \begin{equation}
    \label{special vs unramified supercuspidal eqn 1 implies eqn 2: congruence 5}
        x\left(-\frac{a_2}{d_2}t_\ell(h) \chi\omega(h\Phi) + \frac{b_2}{d_2}(\chi(h\Phi) - \chi\omega(h\Phi)) \right)+y(\chi(h\Phi)-\chi\omega(h\Phi)) \equiv \frac{a_1}{d_1}\xi(h) - \frac{b_1^2}{a_1d_1}\xi(\Phi h \Phi) \pmod{\lambda}.
    \end{equation}
    As we have seen in the proof of \Cref{thm: special vs supercuspidal comparison unramified case}, the integrality of $\Psi_1(\Phi)$ implies that $b_2/d_2 \in \calO_K$, so we can further simplify \eqref{special vs unramified supercuspidal eqn 1 implies eqn 2: congruence 5} to obtain
    \begin{equation}
    \label{special vs unramified supercuspidal eqn 1 implies eqn 2: congruence 6}
        x\chi(h\Phi)\left( \frac{2b_2}{d_2} + \frac{a_2}{d_2}t_\ell(h)\right) + 2y\chi(h\Phi) \equiv \frac{a_1}{d_1}\xi(h) - \frac{b_1^2}{a_1d_1}\xi(\Phi h \Phi) \pmod{\lambda}.
    \end{equation}
    Isolating $2y\chi(\Phi)$ in \eqref{special vs unramified supercuspidal eqn 1 implies eqn 2: congruence 4} and substituting it into \eqref{special vs unramified supercuspidal eqn 1 implies eqn 2: congruence 6}, we get
    \begin{align}
        x\chi(h\Phi)\left( \frac{2b_2}{d_2} + \frac{a_2}{d_2}t_\ell(h)\right) + \chi(h)\left( \frac{a_1}{d_1} - \frac{b_1^2}{a_1d_1}\xi(\Phi^2) - \frac{2b_2x}{d_2}\chi(\Phi)\right) &\equiv \frac{a_1}{d_1}\xi(h) - \frac{b_1^2}{a_1d_1}\xi(\Phi h \Phi) \pmod{\lambda} \\
        \frac{a_2x}{d_2}\chi(h\Phi)t_\ell(h) + \chi(h) \left(\frac{a_1}{d_1} - \frac{b_1^2}{a_1d_1}\xi(\Phi^2)\right) &\equiv \frac{a_1}{d_1}\xi(h) - \frac{b_1^2}{a_1d_1}\xi(\Phi h \Phi) \pmod{\lambda} \\
        \frac{a_2x}{d_2}\chi(h\Phi)t_\ell(h) + \frac{a_1}{d_1}\xi(h) - \frac{b_1^2}{a_1d_1}\chi(h)\xi(\Phi^2) &\equiv \frac{a_1}{d_1}\xi(h) - \frac{b_1^2}{a_1d_1}\xi(\Phi h \Phi) \pmod{\lambda} \\
        \frac{a_2x}{d_2}\chi(h\Phi) t_\ell(h) - \frac{b_1^2}{a_1d_1}\chi(h)\xi(\Phi^2) &\equiv  -\frac{b_1^2}{a_1d_1}\xi(\Phi h \Phi) \pmod{\lambda} \\
        \frac{a_2x}{d_2}\chi(h\Phi) t_\ell(h) - \frac{b_1^2}{a_1d_1}\chi(h)\xi(\Phi^2) &\equiv  -\frac{b_1^2}{a_1d_1}\xi^\Phi(h)\xi(\Phi^2) \pmod{\lambda} \\
        \frac{a_2x}{d_2}\chi(h\Phi) t_\ell(h) &\equiv -\frac{b_1^2}{a_1d_1}\xi(\Phi^2) (\xi^\Phi(h) - \xi(h))\pmod{\lambda} \\
        \frac{a_2x}{d_2}\chi(h\Phi) t_\ell(h) &\equiv -\frac{b_1^2}{a_1d_1}\xi(\Phi^2) (\xi^\Phi(h) - \xi(h))\pmod{\lambda} \label{special vs unramified supercuspidal eqn 1 implies eqn 2: congruence 7}
    \end{align}
    It follows directly from the integrality of $\Psi_1(\Phi)$ that
    \begin{equation}
        \left( \frac{a_1}{b_1}\right)^2 \equiv \xi(\Phi^2) \pmod{\lambda}.
    \end{equation}
    This, combined with the congruence $\chi(\sigma) \equiv -\frac{a_1}{b_1} \pmod{\lambda}$, turns \eqref{special vs unramified supercuspidal eqn 1 implies eqn 2: congruence 7} into
    \begin{equation}
        \frac{a_2x}{d_2}\chi(h) t_\ell(h) \equiv \frac{b_1}{d_1}(\xi^\Phi(h) - \xi(h))\pmod{\lambda},
    \end{equation}
    agreeing with \eqref{special vs unramified supercuspidal eqn 1 implies eqn 2: congruence 1}.
\end{proof}

Lastly, we will compare the reductions of two special representations. Once again, we will only deal with the case $a_1/d_1, a_2/d_2 \in \calO_K^\times$ as the other cases reduce to the comparison of principal series against other types.

\begin{proposition}
\label{special vs special decomposable case}
    Let $\rho_1, \rho_2:W_F \rightarrow \GL_2(\overline{\Q}_\ell)$ be two special representations such that $\rho_1(W_F), \rho_2(W_F) \subset \GL_2(\calO_K)$. Then $\chi\omega\oplus\chi$ and $\theta\omega\oplus\theta$ are the Weil--Deligne representations associated with $\rho_1$ and $\rho_2$, respectively, where $\chi, \omega$ are $1$-dimensional representations of $W_F$. By \Cref{special representation matrix form}, we can assume without loss of generality that there exist upper triangular matrices
    \begin{equation*}
        U_1 = \begin{pmatrix}
            a_1 & b_1 \\
            0 & d_1
        \end{pmatrix}, U_2 = \begin{pmatrix}
            a_2 & b_2 \\
            0 & d_2
        \end{pmatrix}
    \end{equation*}
    in $\GL_2(K)$ such that
    \begin{align*}
        \rho_1(\Phi^k\tau) &= \begin{pmatrix}
            \chi\omega(\Phi^k\tau) & \frac{a_1}{d_1}t_\ell(\tau)\chi\omega(\Phi^k\tau) + \frac{b_1}{d_1}(\chi(\Phi^k\tau) - \chi\omega(\Phi^k\tau)) \\
            0 & \chi(\Phi^k\tau)
        \end{pmatrix}\\
        \rho_2(\Phi^k\tau) &= \begin{pmatrix}
            \theta\omega(\Phi^k\tau) & \frac{a_2}{d_2}t_\ell(\tau)\theta\omega(\Phi^k\tau) + \frac{b_2}{d_2}(\theta(\Phi^k\tau) - \theta\omega(\Phi^k\tau)) \\
            0 & \theta(\Phi^k\tau)
        \end{pmatrix}
    \end{align*}
    for all $\tau \in I_F, k \in \Z$.

    Suppose that $a_1/d_1, a_2/d_2 \in \calO_K^\times$. Then $\overline{\rho}_1 \cong \overline{\rho}_2$ if and only if $\overline{\chi} = \overline{\theta}$ and
    \begin{equation}
    \label{special vs special: valuation inequality}
        v\left( \frac{a_2b_1-a_1b_2}{a_1d_2} \right) \geq \min\{0, 1-i\},
    \end{equation}
    where $i$ is the maximal nonnegative integer such that
    \begin{equation*}
        q \equiv 1 \pmod{\lambda^i}.
    \end{equation*}
\end{proposition}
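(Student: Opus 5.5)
Both reductions are indecomposable by \Cref{thm:decomposability criterion for reductions of special representations}(i), since $a_1/d_1, a_2/d_2 \in \calO_K^\times$. We can therefore apply \Cref{thm: isomorphism of indecomposable representations}. It gives $\overline{\rho}_1 \cong \overline{\rho}_2$ if and only if $\overline{\chi\omega}=\overline{\theta\omega}$, $\overline{\chi}=\overline{\theta}$, and there exist $x\in\calO_K^\times$, $y \in \calO_K$ with
\begin{equation*}
x\psi_1 + y(\chi-\chi\omega) \equiv \psi_2 \pmod{\lambda},
\end{equation*}
where $\psi_j(\Phi^k\tau)=\frac{a_j}{d_j}t_\ell(\tau)\chi_j\omega(\Phi^k\tau)+\frac{b_j}{d_j}(\chi_j-\chi_j\omega)(\Phi^k\tau)$, with $\chi_1=\chi$ and $\chi_2=\theta$. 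The two character conditions are together equivalent to $\overline{\chi}=\overline{\theta}$. The remaining work is to show that solvability in $x,y$ is exactly the valuation inequality.

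First evaluate at $k=0$, $\tau=\tau_0$ with $t_\ell(\tau_0)\in\calO_K^\times$. Because $\overline\chi=\overline\theta$ and $\omega|_{I_F}=1$, this forces $x\frac{a_1}{d_1}\equiv\frac{a_2}{d_2}$. So $x$ is determined mod $\lambda$, namely $\overline{x}=\overline{a_2d_1/(a_1d_2)}$. On pure inertia the congruence then holds automatically, since it reduces to $\frac{a_j}{d_j}t_\ell(\tau)\chi_j(\tau)$ and the identity $\overline{\chi}=\overline{\theta}$. Substituting this $x$ and using $\chi\equiv\theta$, the $t_\ell$-terms cancel identically mod $\lambda$. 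What remains is the question of whether there is $y\in\calO_K$ with
\begin{equation*}
\frac{a_2}{a_1d_2}\left(b_1(\chi-\chi\omega) \right)+y(\chi-\chi\omega)\equiv \frac{b_2}{d_2}(\theta-\theta\omega)\pmod{\lambda}.
\end{equation*}
Here one must be careful: the $b_j/d_j$ need not be integral. The congruence must be read as congruence of the integral quantities $\psi_j$, so $x$ should be taken exactly as $a_2d_1/(a_1d_2)$, with the unit correction absorbed. This is legitimate because replacing $x$ by $x+\lambda$-multiple changes $x\psi_1$ only mod $\lambda$, as $\psi_1$ is integral.

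With the exact choice $x=a_2d_1/(a_1d_2)$, the difference $\psi_2-x\psi_1$ equals
\begin{equation*}
\frac{a_2}{d_2}t_\ell(\tau)(\theta\omega-\chi\omega)+\frac{b_2}{d_2}(\theta-\theta\omega)-\frac{a_2b_1}{a_1d_2}(\chi-\chi\omega).
\end{equation*}
The first term is $\equiv 0$. Writing $\theta=\chi u$ with $\overline{u}=1$, the rest is $-\frac{a_2b_1-a_1b_2}{a_1d_2}\chi(1-\omega)$ plus a term $\frac{b_2}{d_2}\chi(u-1)(1-\omega)$. The condition becomes: $c(1-\omega)\chi$ is congruent to a multiple $y(1-\omega)\chi$ mod $\lambda$, where $c=\frac{a_2b_1-a_1b_2}{a_1d_2}$, after handling the cross term. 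By the argument in \Cref{thm: special vs supercuspidal comparison unramified case}, when $q\not\equiv1$ the integrality of $\psi_j(\Phi)$ gives $b_j/d_j\in\calO_K$. The cross term then vanishes, $1-\omega$ is a unit at $\Phi$, and one can take $y\equiv c$ when $c$ is integral.

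The main obstacle is the necessity direction and the case $q\equiv1 \pmod \ell$. There, $y(\chi-\chi\omega)\equiv0$, so the condition is that $c\chi(1-\omega)$, together with the cross term, is $\equiv0$ at all $\Phi^k$. Since $v(1-q^{-k})\geq v(1-q^{-1})=i$ (in $\lambda$-adic valuation), with equality at $k=1$, this amounts to $v(c)\geq 1-i$. For $q\not\equiv 1$ we have $i=0$, giving $\min\{0,1-i\}=0$, i.e.\ $c$ integral, which is needed because $y$ must be integral. I expect the delicate point to be controlling the cross term $\frac{b_2}{d_2}(u-1)(1-\omega)$ when $b_2/d_2$ is nonintegral. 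I would bound it using the integrality of $\psi_2(\Phi)$, which gives $v(b_2/d_2)+i\geq0$, and use $\theta/\chi=u$ with an appropriate normalization: absorb $u$ by replacing $\theta$ by $\chi$ up to a change of the lattice basis, or check directly that $v(u-1)\geq1$ suffices. Finally, I would verify the converse by reversing these choices.
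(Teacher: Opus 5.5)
Your proposal is correct and follows the paper's route. You apply \Cref{thm: isomorphism of indecomposable representations}, fix $x \equiv a_2d_1/(a_1d_2) \pmod{\lambda}$ using a $\tau_0$ with $t_\ell(\tau_0)\in\calO_K^\times$, and reduce to $(c+y)(1-\omega)\equiv 0 \pmod{\lambda}$, using $v(1-q^{-k})\ge i$ with equality at $k=1$. The cross term you worry about is something the paper passes over when it cancels $\chi$ against $\theta$, and the direct check you suggest closes it: integrality of $\psi_2$ gives $v\bigl(\tfrac{b_2}{d_2}(1-\omega(g))\bigr)\ge 0$ for every $g$, while $v(u(g)-1)\ge 1$.
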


\begin{proof}
    By \Cref{thm: isomorphism of indecomposable representations}, $\overline{\rho}_1 \cong \overline{\rho}_2$ if and only if $\overline{\chi} = \overline{\theta}$ and there exist $x \in \calO_K^\times, y \in \calO_K$ such that
    \begin{align}
    \label{special vs special indecomposable 1st cong}
        & x\left( \frac{a_1}{d_1}t_\ell(\tau)\chi\omega(\Phi^k\tau) + \frac{b_1}{d_1}(\chi(\Phi^k\tau)-\chi\omega(\Phi^k\tau))\right) + y\left(\chi(\Phi^k\tau)-\chi\omega(\Phi^k\tau) \right) \\ \notag & \equiv \frac{a_2}{d_2}t_\ell(\tau)\theta\omega(\Phi^k\tau)+\frac{b_2}{d_2}(\theta(\Phi^k\tau)-\theta\omega(\Phi^k\tau)) \pmod{\lambda}
    \end{align}
    for all $\tau \in I_F, k \in \Z$.  We claim that if $\overline{\chi} = \overline{\theta}$ and $v\left( \frac{a_2b_1}{a_1d_2}-\frac{b_2}{d_2}\right) \geq 1-i$, then we can always find such $x$ and $y$.
    
    Plugging $k = 0$ into \eqref{special vs special indecomposable 1st cong} and picking $\tau_0 \in I_F$ such that $t_{\ell}(\tau_0) \in \calO_K^\times$, we obtain
    \begin{equation*}
        x \equiv \frac{a_2d_1}{a_1d_2} \pmod{\lambda}.
    \end{equation*}
    We will show that this $x$ works for all $k \in \Z, \tau \in I_F$.

    Plugging in $\frac{a_2d_1}{a_1d_2}$ for $x$ in \eqref{special vs special indecomposable 1st cong}, we get
    \begin{align}
        & \frac{a_2d_1}{a_1d_2}\left( \frac{a_1}{d_1}t_\ell(\tau)\chi\omega(\Phi^k\tau) + \frac{b_1}{d_1}(\chi(\Phi^k\tau)-\chi\omega(\Phi^k\tau))\right) + y\left(\chi(\Phi^k\tau)-\chi\omega(\Phi^k\tau) \right) 
        \\ \notag & \equiv \frac{a_2}{d_2}t_\ell(\tau)\theta\omega(\Phi^k\tau)+\frac{b_2}{d_2}(\theta(\Phi^k\tau)-\theta\omega(\Phi^k\tau)) \pmod{\lambda} \\
        & \frac{a_2}{d_2}t_\ell(\tau)\chi\omega(\Phi^k\tau)+\frac{a_2b_1}{a_1d_2}(\chi(\Phi^k\tau)-\chi\omega(\Phi^k\tau))+y(\chi(\Phi^k\tau)-\chi\omega(\Phi^k\tau)) \\
        & \notag \equiv \frac{a_2}{d_2}t_\ell(\tau)\theta\omega(\Phi^k\tau)+\frac{b_2}{d_2}(\theta(\Phi^k\tau)-\theta\omega(\Phi^k\tau)) \pmod{\lambda}
    \end{align}
    The first term on both sides cancels out, and we're left with
    \begin{equation}
        \frac{a_2b_1}{a_1d_2}(\chi(\Phi^k\tau)-\chi\omega(\Phi^k\tau))+y(\chi(\Phi^k\tau)-\chi\omega(\Phi^k\tau)) \equiv \frac{b_2}{d_2}(\theta(\Phi^k\tau)-\theta\omega(\Phi^k\tau)) \pmod{\lambda}
    \end{equation}
    Since $\overline{\chi} = \overline{\theta}$ and $a_2/d_2$ is a unit, we may cancel $\chi(\Phi^k\tau)$ from the left-hand side and $\theta(\Phi^k\tau)$ from the right-hand side to obtain
    \begin{equation}
        \left(\frac{a_2b_1}{a_1d_2}+y \right)(1-\omega(\Phi^k\tau)) \equiv \frac{b_2}{d_2}(1-\omega(\Phi^k\tau)) \pmod{\lambda},
    \end{equation}
    which we can further rearrange to
    \begin{equation}
    \label{special vs special indecomposable 2nd cong}
        \left(\frac{a_2b_1}{a_1d_2}-\frac{b_2}{d_2}+y \right)(1-\omega(\Phi^k\tau)) \equiv 0 \pmod{\lambda}.
    \end{equation}
    Observe that $v(1-\omega(\Phi^k\tau)) = v(1-q^{-k}) \geq v(1-q) = i$ and this lower bound is sharp by the maximality of $i$, so \eqref{special vs special indecomposable 2nd cong} has a solution if and only if there exists $y \in \calO_K$ such that
    \begin{equation}
    \label{special vs special indecomposable 3rd cong}
        v\left( \frac{a_2b_1}{a_1d_2}-\frac{b_2}{d_2}+y\right) \geq 1-i.
    \end{equation}
    We claim that this is equivalent to \eqref{special vs special: valuation inequality}.
    
    First, suppose that $i \geq 1$ so that $\min\{0, 1-i\} = 1-i$. We will show that \eqref{special vs special indecomposable 3rd cong} has a solution if and only if
    \begin{equation}
    \label{special vs special indecomposable 4th cong}
        v\left( \frac{a_2b_1}{a_1d_2}-\frac{b_2}{d_2}\right) \geq 1-i.
    \end{equation}

    Suppose for a contradiction that \eqref{special vs special indecomposable 4th cong} does not hold and there exists $y \in \calO_K$ such that \eqref{special vs special indecomposable 3rd cong} holds. But
    \begin{equation*}
        v\left(\frac{a_2b_1}{a_1d_2}-\frac{b_2}{d_2} \right) < 1-i \leq 0 \leq v(y),
    \end{equation*}
    so
    \begin{equation*}
        v\left( \frac{a_2b_1}{a_1d_2}-\frac{b_2}{d_2}+y\right) = v\left( \frac{a_2b_1}{a_1d_2}-\frac{b_2}{d_2}\right) < 1-i,
    \end{equation*}
    a contradiction. Thus, a solution to \eqref{special vs special indecomposable 3rd cong} forces \eqref{special vs special indecomposable 4th cong}. Conversely, if \eqref{special vs special indecomposable 4th cong} holds, then $y = 0$ is a solution to \eqref{special vs special indecomposable 3rd cong}.

    A similar argument shows that if $i = 0$ so that $\min\{0, 1-i\} = 0$, then \eqref{special vs special indecomposable 3rd cong} has a solution if and only if
    \begin{equation*}
        \frac{a_2b_1}{a_1d_2}-\frac{b_2}{d_2} \in \calO_K.
    \end{equation*}
    This concludes the proof.
\end{proof}

\section{Examples}

Let $r$ be an odd prime. Let $K = \Q(\zeta_r + \zeta_r^{-1})$ where $\zeta_r$ is a primitive $r$th root of unity. Let $\Fp_r$ be a prime in $K$ above $r$ and $\Fq_2$ a prime in $K$ above $2$. In \cite[Theorem 4.1]{BCDF-2025}, it is shown that
\begin{equation}
\label{cong}
  \rhobar_{J_r,\Fp_r} \simeq \rhobar_{L,r} \otimes \chi,
\end{equation}
where $J_r = J_r(t)$ is the abelian variety of $\GL_2(K)$-type in loc.\ cit., $L = L(t)$ is the Legendre curve
\begin{equation}
  L(t) : y^2 = x(x-1)(x-t),
\end{equation}
and $\chi$ is a character of order dividing $2$.

Suppose $(a,b,c) \in \Z^3$ satisfies
\[
    a^r + b^r = c^p,
\]
where $p$ is prime, $(a,b,c) = 1$, $a \equiv 0 \pmod{2}$ and $b \equiv 1 \pmod 2$. Then for 
\[
    t = \frac{a^r}{a^r+b^r},
\]
it is shown in \cite[Corollary 5.5]{BCDF-2025} that the inertial type of $\rho_{J_r,\Fp_r} \mid_{I_{\Fq_2}}$ is a principal series if $r \mid \# \F_{\Fq_2}^\times$ and supercuspidal otherwise. On the other hand, $\rho_{L,r} \otimes \chi \mid_{I_{\Fq_2}}$ is special. Furthermore, the image of inertia of both sides of \eqref{cong} has order $r$. Thus, \eqref{cong} is a source of examples of {\it principal-special} and {\it supercuspidal-special} congruences.

In this section we give examples of principal-special and supercuspidal-special congruences inspired by the above situation, but written in the terminology and notation of our paper.

\begin{remark}
  {\tt ChatGPT} was used to facilitate generalization of our initial direct constructions of such examples, which we then independently explained in terms of the criteria established in this paper.
\end{remark}

\subsection{Principal-Special}

Let $K = \Q_\ell(\zeta_\ell + \zeta_\ell^{-1})$, where $\zeta_\ell$ is a primitive $\ell$th root of unity. Suppose we have a principal series $\rho_{ps}$ on $I_F$ that decomposes as $\delta \oplus \delta^{-1}$, where $\delta:I_F \rightarrow \mu_\ell$ has order $\ell$. For $\tau_0 \in I_F$ such that $\delta(\tau_0) = \zeta_\ell$, we can assume without loss of generality, by rational canonical form, that
\begin{equation}
    \rho_{ps}(\tau_0) = \begin{pmatrix}
        0 & -1 \\
        1 & a_\ell
    \end{pmatrix},
\end{equation}
where $a_\ell := \zeta_\ell + \zeta_\ell^{-1}$. Further conjugating by
\begin{equation*}
  \begin{pmatrix}
      1 & 0 \\
      0 & -1 
  \end{pmatrix}    
\end{equation*}
we may assume
\begin{equation}
    \rho_{ps}(\tau_0) = A = \begin{pmatrix}
        0 & 1 \\
        -1 & a_\ell
    \end{pmatrix}.
\end{equation}
By assumption, $a_\ell$ is in $K$, so $A \in \GL_2(\calO_K)$. Since $A$ generates $\rho_{ps}(I_F)$, we have $\rho_{ps}(I_F) \subseteq \GL_2(\calO_K)$. Note that the eigenvalues of $A$ are $\zeta_\ell^{\pm 1}$. A priori, these values need not be in $K$. By base changing to $L := K(\zeta_\ell)$, however, we can assume that they lie in the coefficient field. Then we have
\begin{equation}
    \begin{pmatrix}
        1 & 1 \\
        \zeta_\ell & \zeta_\ell^{-1}
    \end{pmatrix}
    \begin{pmatrix}
        \zeta_\ell & 0 \\
        0 & \zeta_\ell^{-1}
    \end{pmatrix}
    \begin{pmatrix}
        1 & 1 \\
        \zeta_\ell & \zeta_\ell^{-1}
    \end{pmatrix}^{-1} = A
\end{equation}
and the matrix
\begin{equation}
    M := \begin{pmatrix}
        1 & 1 \\
        \zeta_\ell & \zeta_\ell^{-1}
    \end{pmatrix}
\end{equation}
is in $\GL_2(L)$. Using Iwasawa decomposition, we can write $M$ as
\begin{equation}
    M = \begin{pmatrix}
        1 & 0 \\
        \zeta_\ell & 1
    \end{pmatrix}\begin{pmatrix}
        1 & 1 \\
        0 & \zeta_\ell^{-1} - \zeta_\ell
    \end{pmatrix}.
\end{equation}
Let
\begin{equation}
    U = \begin{pmatrix}
        1 & 1 \\
        0 & \zeta_\ell^{-1} - \zeta_\ell
    \end{pmatrix}.
\end{equation}
Comparing this with \Cref{thm:decomposability criterion for reductions of principal series}, we have $a_1 = 1, b_1 = 1, d_1 = \zeta_\ell^{-1} - \zeta_\ell$. To determine the valuation of $d_1$ in $L$, we write it as
\begin{equation}
\zeta_\ell^{-1} - \zeta_{\ell} = \zeta_\ell^{-1}(1-\zeta_\ell)(1+\zeta_\ell).
\end{equation}
We look at each factor separately. Note that $\zeta_\ell^{-1} \in \calO_L^\times$. Further, $1-\zeta_\ell$ is a uniformizer for L, so $v(1-\zeta_\ell) = 1$. Finally, since $\zeta_\ell \equiv 1 \pmod{\lambda}$, we have $\zeta_\ell + 1 \equiv 2 \pmod{\lambda}$, so $\zeta_\ell + 1$ is also in  $\calO_K^\times$ as $\ell$ is odd. Hence, $v(\zeta_\ell^{-1}-\zeta_\ell) = 1$, so $v(b_1/d_1) = -1.$

Next, we describe the characters $\delta$ and $\delta^{-1}$. The image of $\delta$ is a finite, cyclic group of order $\ell \neq p$. Since $P_F$ is a pro-$p$ group, we must have $P_F \subseteq \ker\delta$, meaning $\delta$ factors through $I_F/P_F \cong \prod_{m \neq p} \Z_m$. For the image to be of order $\ell$, it must project onto the $\Z_\ell$ factor, which gives us $t_\ell$. We then reduce $t_\ell$ mod $\ell\Z_\ell$ to get a representation over $\Z/\ell \Z$, which gives us a representation mapping to $\mu_\ell(\calO_L)$, the $\ell$th roots of unity in $\calO_L$. Therefore, for any $\tau \in I_F$, we have
\begin{equation}
\label{delta-formula}
  \delta(\tau) = \zeta_\ell^{t_\ell(\tau)}.
\end{equation}
where it is understood that $t_\ell(\tau) \in \Z_\ell$ is reduced modulo $\ell$ in the above formula.

We now confirm that the maximal nonnegative integer $i$ such that
\begin{equation}
    \delta(\tau) \equiv \delta^{-1}(\tau) \pmod{\lambda^i}
\end{equation}
for all $\tau \in I_F$ is $i = 1$, which implies by \Cref{thm:decomposability criterion for reductions of principal series} on the decomposability of reductions of principal series that $\overline{\rho}_{ps}$ is indecomposable.

By \eqref{delta-formula} we have
\begin{equation}
    \delta(\tau) - \delta^{-1}(\tau) \equiv \zeta_\ell^{t_\ell(\tau)} - \zeta_\ell^{-t_\ell(\tau)} \pmod{\lambda^i}.
\end{equation}
Pick $\tau_1 \in I_F$ such that $t_\ell(\tau_1) \equiv 1 \pmod{\ell}$. Then
\begin{equation}
    \zeta_\ell^{t_\ell(\tau_1)} - \zeta_\ell^{-t_\ell(\tau_1)} \equiv \zeta_\ell^{-1} (\zeta_\ell-1)(\zeta_\ell +1) \pmod{\lambda^ i}.
\end{equation}
Once again, the first factor and the third factors are units, and the second factor is a uniformizer, so $i = 1$.

Next, we construct a special representation $\rho_{sp}$. Let $\chi$ be an unramified $1$-dimensional representation and take $a_2/d_2 = 1$ so that
\begin{equation}
\label{principal vs special example matrix 1}
    \rho_{sp}(\tau) = \begin{pmatrix}
        1 & t_\ell(\tau) \\
        0 & 1
    \end{pmatrix}
\end{equation}
for all $\tau \in I_F$. As $a_2/d_2 \in \calO_K^\times$, $\overline{\rho}_{sp}$ is indecomposable by \Cref{thm:decomposability criterion for reductions of special representations}.

By \Cref{thm: principal vs special}, $\overline{\rho}_{ps} \cong \overline{\rho}_{sp}$ if and only if $\overline{\delta} = \overline{\delta}^{-1} = \overline{1}$ and there exists $x \in \calO_L^\times$ such that
\begin{equation}
\label{special vs principal example isomorphism congruence}
    xt_\ell(\tau) \equiv \frac{1}{\zeta_\ell^{-1} - \zeta_\ell}(\delta^{-1}(\tau) - \delta(\tau)) \pmod{\lambda}
\end{equation}
for all $\tau \in I_F$. Putting $\delta(\tau) = \zeta_\ell^{t_\ell(\tau)}$ once again and plugging in $\tau = \tau_1$, \eqref{special vs principal example isomorphism congruence} becomes
\begin{equation}
    x \equiv \frac{1}{\zeta_\ell^{-1} - \zeta_\ell}(\zeta_\ell^{-1} - \zeta_\ell) \equiv 1 \pmod{\lambda}.
\end{equation}
We will show that this choice of $x$ works for all $\tau \in I_F$, i.e.,
\begin{equation}
\label{principal vs special example final congruence}
    t_\ell(\tau) \equiv \frac{\delta^{-1}(\tau) - \delta(\tau) }{\zeta_\ell^{-1} - \zeta_\ell } \pmod{\lambda}
\end{equation}
for all $\tau \in I_F$.

We have
\begin{align}
    \frac{\delta^{-1}(\tau) - \delta(\tau) }{\zeta_\ell^{-1} - \zeta_\ell } \equiv \frac{\zeta_\ell^{-t_\ell(\tau)} - \zeta_\ell^{t_\ell(\tau)} }{\zeta_\ell^{-1} - \zeta_\ell } &\equiv \frac{\zeta_\ell^{-t_\ell(\tau)}}{\zeta_\ell^{-1} } \frac{1 - \zeta_\ell^{2t_\ell(\tau)} }{1 - \zeta_\ell^2 } \pmod{\lambda} \\
    &\equiv \frac{\zeta_\ell^{-t_\ell(\tau)}}{\zeta_\ell^{-1} }\frac{(1-\zeta_\ell^{t_\ell(\tau)})(1+ \zeta_\ell^{t_\ell(\tau)})  }{(1-\zeta_\ell)(1+\zeta_\ell)} \pmod{\lambda}.
\end{align}
Observe that the terms $\frac{\zeta_\ell^{-t_\ell(\tau)}}{\zeta_\ell^{-1} }, \frac{1-\zeta_\ell^{t_\ell(\tau)}}{1-\zeta_\ell}$, and $\frac{1+ \zeta_\ell^{t_\ell(\tau)}}{1+\zeta_\ell}$ are each in $\calO_K$ separately. Since $\zeta_\ell \equiv 1 \pmod{\lambda}$, the first term is simply $1$ mod $\lambda$. By the same argument, the last term is also $1$ mod $\lambda$. Finally, we have
\begin{equation}
    \frac{1-\zeta_\ell^{t_\ell(\tau)}}{1-\zeta_\ell} \equiv \frac{(1-\zeta_\ell)(1 + \zeta_\ell + \dots + \zeta_\ell^{t_\ell(\tau)-1} )}{1-\zeta_\ell} \equiv 1 + \zeta_\ell + \dots + \zeta_\ell^{t_\ell(\tau)-1} \equiv t_\ell(\tau) \pmod{\lambda}.
\end{equation}
Hence, \eqref{principal vs special example final congruence} holds and we have produced a congruence between a principal series and special representation on inertia.



\subsection{Supercuspidal-Special}

Let $E/F$ be the unramified quadratic extension and $\rho_{sc} = \Ind_{W_E}^{W_F}\theta$ be a nonexceptional supercuspidal representation of $W_F$ such that the restriction  $\delta = \theta|_{I_F}$ has order $\ell$. Normalize $\theta$ so that $\theta(\Phi^2) = 1$. Then $\rho_{sc}|_{I_F} = \delta \oplus \delta^\sigma$, and since $E/F$ is unramified, we can take $\sigma$ to be $\Phi$ so that $\rho_{sc}|_{I_F} = \delta \oplus \delta^\Phi$. Note that we have $\delta(\tau) = \zeta_\ell^{t_\ell(\tau)}$ by the same argument that we have used in the previous example, so
\begin{equation*}
    \delta^\Phi(\tau) = \delta(\Phi \tau \Phi^{-1}) = \zeta_\ell^{t_\ell(\Phi \tau \Phi^{-1})} = \zeta_\ell^{q^{-1}t_\ell(\tau)} = \delta(\tau)^{q^{-1}}.
\end{equation*}

Since $(\delta^\Phi)^\Phi = \delta$, we have $\delta^{q^2} = \delta$, so
\begin{equation*}
    q^2 \equiv 1 \pmod{\ell},
\end{equation*}
as $\delta$ has order $\ell$. However, $\delta^\Phi \neq \delta$ as $\rho_{sc}$ is irreducible, so we find
\begin{equation*}
    q \equiv -1 \pmod{\ell}.
\end{equation*}
Thus, $\rho_{sc}|_{I_F}$ decomposes as $\delta \oplus \delta^{-1}$.

Let $\tau_0 \in I_F$ such that $\delta(\tau_0) = \zeta_\ell$. We have seen that, after a change of basis, $\rho_{sc}(\tau_0)$ is of the form
\begin{equation*}
    \rho_{sc}(\tau_0) = \begin{pmatrix}
        \delta(\tau_0) & \frac{b_1}{d_1}(\delta^\Phi(\tau_0) - \delta(\tau_0)) \\
        0 & \delta^\Phi(\tau_0)
    \end{pmatrix} = \begin{pmatrix}
        \zeta_\ell & \frac{b_1}{d_1}(\zeta_\ell^{-1} - \zeta_\ell) \\
        0 & \zeta_\ell^{-1}
    \end{pmatrix}.
\end{equation*}
We pick $b_1, d_1$ so that $b_1/d_1 = (\zeta_\ell^{-1} - \zeta_\ell)^{-1}$. We can write
\begin{equation*}
    \zeta_\ell^{-1}-\zeta_\ell = \zeta_\ell^{-1}(1-\zeta_\ell)(1+\zeta_\ell),
\end{equation*}
and since $\zeta_\ell, 1+\zeta_\ell$ are both in $\calO_K^\times$, we have $d_1/b_1 \in \lambda$.

Next, we look at the image of $\Phi$. Since $\theta(\Phi^2) =1$, we have
\begin{equation*}
    \rho_{sc}(\Phi) = \begin{pmatrix}
        \frac{b_1}{a_1} & \frac{a_1}{d_1} - \frac{b_1^2}{a_1d_1} \\
        \frac{d_1}{a_1} & -\frac{b_1}{a_1}
    \end{pmatrix}.
\end{equation*}
We pick $a_1$ to be $-b_1$ so that $\rho_{sc}(\Phi)$ becomes
\begin{equation*}
    \rho_{sc}(\Phi) = \begin{pmatrix}
        -1 & 0 \\
        -(\zeta_\ell^{-1}-\zeta_\ell) & 1
    \end{pmatrix}.
\end{equation*}
We have
\begin{equation}
\label{special vs supercuspidal example: supercuspidal matrices}
    \overline{\rho}_{sc}(\tau_0) = \begin{pmatrix}
        1 & 1 \\
        0 & 1
    \end{pmatrix},
    \overline{\rho}_{sc}(\Phi) = \begin{pmatrix}
        -1 & 0 \\
        0 & 1
    \end{pmatrix} = \begin{pmatrix}
        q & 0 \\
        0 & 1
    \end{pmatrix}.
\end{equation}
Note that $\overline{\rho}_{sc}$ is reducible by \Cref{thm: decomposability criterion for nonexceptional supercuspidal representations} since $d_1/a_1 \in \lambda$, but it is indecomposable as $\overline{\rho}_{sc}(\tau)$ is not diagonalizable.

Next, we construct a special representation $\rho_{sp}$ with an unramified character $\chi$. Once again, normalize so that $\chi(\Phi) = 1$. As we have seen in \Cref{special representation matrix form}, $\rho_{sp}$ is of the form
\begin{equation*}
    \rho_{sp}(\Phi^k\tau) = \begin{pmatrix}
            \chi\omega(\Phi^k\tau) & \frac{a_2}{d_2}t_\ell(\tau)\chi\omega(\Phi^k\tau) + \frac{b_2}{d_2}(\chi(\Phi^k\tau) - \chi\omega(\Phi^k\tau)) \\
            0 & \chi(\Phi^k\tau)
        \end{pmatrix}.
\end{equation*}
For $k=0, \tau=\tau_0$, this becomes
\begin{equation*}
    \rho_{sp}(\tau) = \begin{pmatrix}
        1 & \frac{a_2}{d_2}t_\ell(\tau) \\
        0 & 1
    \end{pmatrix}.
\end{equation*}
Note that $\delta(\tau_0) = \zeta_\ell^{t_\ell(\tau_0)} = \zeta_\ell$, so $t_\ell(\tau_0) \equiv 1 \pmod{\ell}$. Let $a_2/d_2 = t_\ell(\tau_0)^{-1}$ so that
\begin{equation*}
    \rho_{sp}(\tau_0) = \begin{pmatrix}
        1 & 1 \\
        0 & 1
    \end{pmatrix}.
\end{equation*}
As $a_2/d_2 \in \calO_K^\times$, $\overline{\rho}_{sp}$ is indecomposable by \Cref{thm:decomposability criterion for reductions of special representations}. We also have
\begin{equation*}
    \rho_{sp}(\Phi) = \begin{pmatrix}
        q^{-1} & \frac{b_2}{d_2}(1-q^{-1}) \\
        0 & 1
    \end{pmatrix}.
\end{equation*}
We pick $b_2 = 0$ so that
\begin{equation*}
    \rho_{sp}(\Phi) = \begin{pmatrix}
        q^{-1} & 0 \\
        0 & 1
    \end{pmatrix}.
\end{equation*}
Observe that
\begin{equation*}
    \overline{\rho}_{sp}(\tau_0) = \begin{pmatrix}
        1 & 1 \\
        0 & 1
    \end{pmatrix}, \overline{\rho}_{sp}(\Phi) = \begin{pmatrix}
        q^{-1} & 0 \\
        0 & 1
    \end{pmatrix} = \begin{pmatrix}
        q & 0 \\
        0 & 1
    \end{pmatrix}.
\end{equation*}

Lastly, we check $\overline{\rho}_{sc}$ and $\overline{\rho}_{sp}$ against our isomorphism criterion in \Cref{thm: special vs supercuspidal comparison unramified case}. We have already seen that $q \equiv -1 \pmod{\ell}$. Since $\overline{\delta} = \overline{\chi}|_{I_F} = \overline{1}$ and $\theta(\Phi^2) = \chi(\Phi^2) = 1$, we have $\overline{\theta} = \overline{\chi}|_{W_E}$. Further, $\chi(\Phi) = 1 = -\frac{a_1}{b_1}$. Finally, let $x = 1$. Since $\tau_0$ generates the image of $I_F$, it suffices to look at elements of the form $\Phi^{2k}\tau_0^s$, where $k, s \in \Z$. We have
\begin{align*}
    \frac{ax}{d}t_\ell(\Phi^{2k}\tau_0^s)\chi(\Phi^{2k}\tau_0^s) \equiv t_\ell(\tau_0^s) \equiv st_\ell(\tau_0) &\equiv s \pmod{\lambda} \\
    &\equiv \frac{1}{\zeta_\ell^{-1} - \zeta_\ell} (\zeta_\ell^{-s} - \zeta_\ell^s)\\
    &\equiv \frac{n}{r}(\theta^\Phi(\Phi^{2k}\tau_0^s) - \theta(\Phi^{2k}\tau_0^s)) \pmod{\lambda}.
\end{align*}
Hence, $\overline{\rho}_{sc} \cong \overline{\rho}_{sp}$ by \Cref{thm: special vs supercuspidal comparison unramified case}.

\bibliographystyle{acm}
\bibliography{main}

\end{document}